\newcommand{\KL}{\mathrm{KL}}
\renewcommand{\H}{\mathrm{H}}
\newcommand{\I}{\mathrm{I}}
\newcommand{\uI}{\underline{\mathrm{I}}}
\newcommand{\R}{\mathbb{R}}
\newcommand{\E}{\mathbb{E}}
\renewcommand{\d}{\mathrm{d}}
\title[Information Lower Bounds for Robust Mean Estimation]{Information Lower Bounds for Robust Mean Estimation}
\DeclareMathOperator{\Var}{\mathrm{Var}} 
\begin{document}

\maketitle

\begin{abstract}%
We prove lower bounds on the error of any estimator for the mean of a real probability distribution under the knowledge that the distribution belongs to a given set.
We apply these lower bounds both to parametric and nonparametric estimation.
In the nonparametric case, we apply our results to the question of sub-Gaussian estimation for distributions with finite variance to obtain new lower bounds in the small error probability regime, and present an optimal estimator in that regime. 
In the (semi-)parametric case, we use the Fisher information to provide distribution-dependent lower bounds that are constant-tight asymptotically, of order $\sqrt{2\log(1/\delta)/(nI)}$ where $I$ is the Fisher information of the distribution. We use known minimizers of the Fisher information on some nonparametric set of distributions to give lower bounds in cases such as corrupted distributions, or bounded/semi-bounded distributions.
\end{abstract}

\begin{keywords}%
  Nonparametric statistics, mean estimation, sub-Gaussian estimator, Kullback-Leibler divergence
\end{keywords}

\section{Introduction}

Mean estimation is the task of finding an estimate of the expectation $\mathbb{E}_F[X]$ of a probability distribution $F$, given only access to $n \in \mathbb{N}$ independent samples $X_1, \ldots, X_n$.
In this work, $F$ will be a probability distribution on $\mathbb{R}$ ($F \in \mathcal P(\mathbb{R})$), with finite mean $\mathbb{E}_F[X]$.
We are interested in estimators, measurable functions $\tilde{\mu}_n : \mathbb{R}^n \to \mathbb{R}$ for a number of samples $n \in \mathbb{N}$, such that when $X_1, \ldots, X_n$ are i.i.d. with law $F$, the estimation error $\vert \tilde{\mu}_n(X_1, \ldots, X_n) - \mathbb{E}_F[X] \vert$ is as small as possible with probability close to 1.
We will also write simply $\tilde{\mu}_n$ for the random variable $\tilde{\mu}_n(X_1, \ldots, X_n)$.

\paragraph{The empirical mean}

Perhaps the simplest mean estimator is the empirical mean, $\hat{\mu}_n = \frac{1}{n}\sum_{i=1}^n X_i$. For any distribution $F$ with finite variance $\sigma^2$, by the central limit theorem,
\begin{align*}
\mathbb{P}\left( \vert \hat{\mu}_n - \mathbb{E}_F[X] \vert \ge \sqrt{\frac{\sigma^2}{n}} \Phi^{-1}(1 - \delta /2)\right) \to \delta \: .
\end{align*}
We are however interested in non-asymptotic guarantees, for a given sample size $n$.
For Gaussian random variables with variance $\sigma^2$, the empirical mean is also Gaussian and $\delta$ is a non-asymptotic upper bound of the deviation probability written above.
In particular, on Gaussian distributions the empirical mean has deviation bounded by $\sqrt{\frac{2\sigma^2 \log(2/\delta)}{n}}$ (a slightly weaker bound than $\sqrt{\frac{\sigma^2}{n}} \Phi^{-1}(1 - \delta /2)$ since $\Phi^{-1}(1 - \delta /2) \le \sqrt{2 \log (2/\delta)}$), without using any knowledge of the variance $\sigma^2$.
By Chernoff's method, we can extend the properties of the empirical mean to sub-Gaussian distributions: for any $\bar{\sigma}^2$-sub-Gaussian probability distribution, the empirical mean has error less than $\sqrt{\frac{2\bar{\sigma}^2 \log(2/\delta)}{n}}$ with probability $1 - \delta$, for any $\delta \in (0,1]$.

In contrast, lower bounds in \citep{catoni2012challenging} show that for the class of distributions with finite second moment, Chebyshev's inequality is essentially tight for the empirical mean \cite[Proposition 6.2]{catoni2012challenging}, and that on the other hand there exists other estimators with error width much smaller.
Then, the empirical mean is not a good estimator for that class of distributions and one needs to design more sophisticated estimators to achieve optimal rates of convergence for distribution with only finite moments bounds (and not an infinity of finite moments). 

\begin{definition}
An estimator $\tilde{\mu}_n$ is a $(x(n, \delta, F), \delta)$-estimator on $\mathcal D \subseteq \mathcal P(\mathbb{R})$ if for all $F \in \mathcal D$
\begin{align*}
F^{\otimes n}(\vert \tilde{\mu}_n - \mathbb{E}_{F}[X] \vert \ge x(n, \delta, F)) \le \delta
\: .
\end{align*}
\end{definition}

For a given set of distributions $\mathcal D \subseteq \mathbb{R}$, we will be interested in two main questions:
\begin{itemize}[nosep]
  \item Can we get lower bounds on $x(n, \delta, F)$ for a given set of distributions $\mathcal D$?
  \item Can we design estimators with error upper bounds that match those lower bounds?
\end{itemize}

\paragraph{Sub-Gaussian estimators}

Several recent works on mean estimation follow \cite{devroye2016sub} and consider the question of finding \emph{sub-Gaussian} estimators.
An estimator is said to be sub-Gaussian on a class of distributions $\mathcal D$ with constant $C$ if it is a $(C\sqrt{\frac{2\Var_F[X] \log(2/\delta)}{n}}, \delta)$-estimator on $\mathcal D$, where $\Var_F[X]$ is the variance of $F \in \mathcal D$.
The empirical mean is a sub-Gaussian estimator with constant 1 on the set of sub-Gaussian distributions, for any $\delta$.
\cite{devroye2016sub} showed that there is no sub-Gaussian estimator on the class of distributions with moment $1 + \alpha$ if $\alpha < 1$. The main class of interest in that line of work is then distributions with finite variance.

We have now several algorithms which are sub-Gaussian for a constant $C > 1$, for $\delta \in (e^{-c n}, 1)$ for some $c > 0$~\citep{lugosi2019mean,cherapanamjeri2019fast,minsker2021robust,cherapanamjeri2022optimal}. See also \citep{gobet2023robust} which provides an overview of these results.
On the other hand, using techniques based on median-of-means, \cite{lee2022optimal} and \cite{minsker2023efficient} designed algorithms that are sub-Gaussian for $C = 1 + o(1)$ (close to optimal since $C=1$ is the best attainable constant asymptotically), under hypotheses $\log(1/\delta)/n \to 0$ and $\delta \to 0$.
\cite{devroye2016sub} showed that the $\delta \ge e^{-cn}$ barrier is not improvable: for any $C$, there is no sub-Gaussian estimator with constant $C$ for $\delta \le e^{-cn}$, for $c$ small enough.
Our lower bounds shed light on the optimal deviation of an estimator in the $\delta \le e^{-cn}$ regime. We derive the optimal function $y(n,\delta)$ such that $(\sqrt{2 \Var_F[X] y(n, \delta)}, \delta)$-estimators exist. While $y(n, \delta) \approx \log(1/\delta)/n$ if $\log(1/\delta)/n$ is small enough, we also describe its shape for $\delta \ll e^{-n}$, where the notion of sub-Gaussian estimator with a given constant is not an adequate concept.

\subsection{Related work}
\label{sub:related_work}
In addition to the literature on sub-Gaussian estimators, our article is related to the following works.

\noindent\textit{Robust estimation.}
Originating in the works of~\cite{huber1964,huber1967behavior} and~\cite{10.1214/aoms/1177693054}, robustness theory was at first interested in providing efficient estimators of location parameters in the presence of outlier data, also called corruption. Most results were asymptotic and ignored the finite sample rates of convergence \citep{robuststat}. Mean estimation with sub-Gaussian rates  under heavy-tail assumptions is closely linked to robust estimation under corruption, although estimators that perform well for Heavy-tail distributions do not necessarily perform well on corrupted data. 


\noindent\textit{Information lower bounds.} The lower bounds we derive for estimation come from a reduction to testing, which follows \citep{tsybakov2009introduction}. We then bound the error probability of tests using information theoretic quantities like the Kullback-Leibler divergence ($\KL$).
This general idea was already present in the work of \cite{wald1945sequential}.
Lower bound techniques for tests using $\KL$ were then the subject of further development in the sequential testing literature \citep{hoeffding1953lower,hoeffding1960lower,lorden1983asymptotic}, and more recently in the multi-armed bandit literature \citep{lai1985asymptotically,burnetas1996optimal,garivier2016optimal}.
\cite{garivier2019explore} demonstrated that the data-processing inequality of $\KL$ is an easy-to-use and important tool to obtain lower bounds.

\noindent\textit{Lower bounds based on the Fisher information.} 
The classical bound by Cramer and Rao~\citep{cramer1946mathematical,radhakrishna1945information} says that in a sufficiently smooth model, the minimal variance an estimator can achieve is controlled by the inverse of the Fisher information. The Fisher information appear in our case through the asymptotics of the likelihood ratio (see Section 7.2 of \citep{le2000asymptotics}), which we use in Section~\ref{sec:parametric} to get the expression for the rates of convergence in a semi-parametric setting. A more recent line of work using the Fisher information can be found in~\citep{gupta2022finite,gupta2023finite} in which the authors obtain lower bound and upper bound on the estimation error with a dependency in the inverse of the Fisher information of a smoothed version of the target distribution. Compared to our work in Section~\ref{sec:parametric}, the lower bound provided in \cite[Section 6]{gupta2022finite} is asymptotic and valid only for target distributions that are smoothed versions of a distribution, for $\delta\ll 1$.
\vspace{-0.75em}

\subsection{Summary of our contributions}
\label{sub:contributions}

\begin{itemize}[noitemsep]
  \item We prove a generic lower bound on $x(n, \delta, F)$ for a $(x(n, \delta, F), \delta)$-estimator on any set of distributions with finite means (Theorem~\ref{thm:generic_delta_bound}).
  \item We apply that lower bound for $\mathcal D$ the set of distributions with finite variance (Theorem~\ref{thm:lower_bound_finite_var}) and we introduce an estimator with a matching upper bound for $\delta \ll 1/n$ (Theorem~\ref{thm:estimator_upper_bound}).
  \item We prove an asymptotic high probability lower bound for the estimation error in a semi-parametric setting using the Fisher information of the sampling distribution $\I(F)=\int (f')^2/f$ with $f$ the density of $F$ (Theorem~\ref{thm:without_2}), using quadratic mean differentiability assumptions. We apply this lower bound to the case of corrupted, bounded and semi-bounded distributions.
\end{itemize}
\vspace{-1em}
\section{Testing and estimation lower bounds}
\label{sec:testing_lb}


In order to obtain a lower bound on the performance of an estimator, we reduce that problem to testing, as is often done in the estimation literature (see for example \citep{tsybakov2009introduction}). Suppose that all distributions in $\mathcal D$ have finite mean. Then for any $x : \mathcal D \to \mathbb{R}$,
\begin{align*}
&\sup_{F \in \mathcal D} F\left(\vert \tilde{\mu}_n - \mu_F \vert \ge x(F)\right)
\\&= \sup_{c \in \mathbb{R}}\max\left\{\sup_{F \in \mathcal D, \mathbb{E}_{F}[X] \le c - x(F)} F(\tilde{\mu}_n \ge c),
  \sup_{G \in \mathcal D, \mathbb{E}_{G}[X] \ge c + x(G)} G(\tilde{\mu}_n \le c) \right\}
\: .
\end{align*}
For $E_c = \{\tilde{\mu}_n \ge c\}$ and $\mathcal F_c = \{F \in \mathcal D, \mathbb{E}_{F}[X] \le c - x(F)\}$, $\mathcal G_c = \{G \in \mathcal D, \mathbb{E}_{G}[X] \ge c + x(G)\}$, we obtain a lower bound of the form
\begin{align*}
\sup_{F \in \mathcal D} F\left(\vert \tilde{\mu}_n - \mu_F \vert \ge x(F)\right)
&\ge \sup_{c \in \mathbb{R}} \max\{\sup_{F \in \mathcal F_c}F(E_c), \sup_{G \in \mathcal G_c}G(E_c^c)\} \: .
\end{align*}
The only reason this is not an equality is that $E_c^c = \{\tilde{\mu}_n < c\}$ might differ from $\{\tilde{\mu}_n \le c\}$, and since we don't assume anything on $\tilde{\mu}_n$ the probability of the two events might be different.
In order to get estimation lower bounds, it then suffices to get lower bounds on $\max\{\sup_{F \in \mathcal F}F(E), \sup_{G \in \mathcal G}G(E^c)\}$ for an event $E$ and two families of distributions $\mathcal F$ and $\mathcal G$.
This can be seen as a testing bound: for a test trying to tell whether a distribution belongs to $\mathcal F$ or $\mathcal G$ and for $E$ the event that the test returns $\mathcal G$, this quantity is the error probability of the test.

\subsection{Divergences between probability distributions}

The lower bounds we prove for testing problems use tools from information theory, notably several divergences between probability distributions which we recall here.

\begin{definition}\label{def:kl_renyi}
Let $P$ and $Q$ be two probability distributions on a measurable space $\Omega$, let $\mu$ be a dominating measure of both (for example $P+Q$) and let $p = \frac{dP}{d\mu}$ and $q = \frac{dQ}{d\mu}$. We recall the definition of the following divergences, for $\alpha \in (0,1)$,
\begin{align*}
\text{Kullback-Leibler: }\KL(P,Q) &= \mathbb{E}_{X \sim P}\left[ \log \frac{d P}{d Q}(X) \right] \text{ if } P \ll Q \: , \KL(P, Q) = + \infty \text{ otherwise,}
\\
\text{Rényi: }D_\alpha(P,Q) &= - \frac{1}{1 - \alpha}\log \int p^\alpha q^{1-\alpha} \mathrm{d}\mu \: .
\end{align*}
\end{definition}
The Rényi divergence is finite if and only if $P$ and $Q$ are not mutually singular.
We refer the reader to \citep{van2014renyi} for a thorough review of the properties of the Rényi divergences and their relations to $\KL$.
Particularly relevant here are the relations $\frac{1}{2} D_{1/2}(P,Q) = -\log(1 - \H^2(P, Q))$, where $\H(P, Q) = \sqrt{\frac{1}{2}\int (\sqrt{p} - \sqrt{q})^2 \mathrm{d}\mu}$ is the Hellinger distance and $(1 - \alpha)D_\alpha(P,Q) = \inf_{R \in \mathcal P(\Omega)}(\alpha \KL(R, P) + (1 - \alpha)\KL(R, Q))$.
The Kullback-Leibler and Rényi divergences satisfy data processing inequalities, from which we can obtain the following lemma (proved in Appendix~\ref{app:testing_lb}), which is central to the proofs in this paper.

\begin{lemma}\label{lem:data_processing}
For all probability measures $P, Q$ on a measurable space $\Omega$, event $E$ and $\alpha \in (0,1)$,
\begin{align}
\KL(P, Q)
&\ge P(E) \log\frac{1}{Q(E)} - \log 2
\: ,\label{eq:kl_data_processing}
\\
(1 - \alpha) D_\alpha(P, Q)
&\ge \min\{\alpha, 1 - \alpha\} \log \frac{1}{\max\{P(E), Q(E^c)\}} - \log 2
\: .\label{eq:renyi_data_processing}
\end{align}
\end{lemma}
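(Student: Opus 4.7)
My plan is to reduce both inequalities to the binary versions of the divergences (evaluated at $P(E)$ and $Q(E)$) via the data-processing inequality, then to simplify those binary expressions by controlling auxiliary terms in an elementary way.

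For the KL inequality, I would first invoke data-processing with respect to the deterministic map $\omega \mapsto \mathbbm{1}_E(\omega)$. This gives $\KL(P, Q) \ge \kl(P(E), Q(E))$, where $\kl(p, q) = p \log(p/q) + (1-p)\log((1-p)/(1-q))$ is the binary KL. Expanding,
\[
\kl(p, q) \;=\; p\log\frac{1}{q} + (1-p)\log\frac{1}{1-q} - H(p),
\]
where $H(p) = -p\log p - (1-p)\log(1-p) \le \log 2$ is the binary entropy. Dropping the nonnegative term $(1-p)\log(1/(1-q))$ and using $H(p) \le \log 2$ yields $\kl(p, q) \ge p\log(1/q) - \log 2$, which is \eqref{eq:kl_data_processing} after specializing to $p = P(E)$, $q = Q(E)$.

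For the Rényi inequality, I apply the data-processing inequality for $D_\alpha$ to the same indicator map. This gives
\[
(1-\alpha) D_\alpha(P, Q) \;\ge\; -\log\!\Bigl(P(E)^\alpha Q(E)^{1-\alpha} + P(E^c)^\alpha Q(E^c)^{1-\alpha}\Bigr).
\]
Writing $m = \max\{P(E), Q(E^c)\}$, note that $P(E) \le m$ and $Q(E^c) \le m$, while the remaining factors $Q(E)^{1-\alpha}$ and $P(E^c)^\alpha$ are bounded by $1$. Hence each of the two terms in the sum is at most $m^\alpha$ or $m^{1-\alpha}$, respectively, and since $m \le 1$, both are dominated by $m^{\min\{\alpha, 1-\alpha\}}$. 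Thus the sum is bounded by $2\, m^{\min\{\alpha, 1-\alpha\}}$, and taking $-\log$ gives \eqref{eq:renyi_data_processing}.

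The main obstacle, if any, is confirming that the required data-processing inequalities for $\KL$ and for $D_\alpha$ are available in the form I want; both are standard (see e.g.\ \citep{van2014renyi} for the Rényi case), so this is really a bookkeeping exercise. The only slightly delicate step is the elementary factor-by-factor bound in the Rényi argument, where the case distinction between $\alpha \le 1/2$ and $\alpha \ge 1/2$ is unified automatically by using $m \le 1$ together with $\min\{\alpha, 1-\alpha\}$.
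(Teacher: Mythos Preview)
Your proof is correct. The KL argument matches the paper's exactly (the paper cites \citep{garivier2019explore} for it, but what you wrote is precisely the content of that reference).

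For the Rényi inequality, your route differs from the paper's. The paper does \emph{not} apply data-processing for $D_\alpha$ directly; instead it uses the variational identity $(1-\alpha)D_\alpha(P,Q)=\inf_{R}\bigl(\alpha\KL(R,P)+(1-\alpha)\KL(R,Q)\bigr)$, applies the already-proved KL inequality \eqref{eq:kl_data_processing} to each of $\KL(R,P)$ and $\KL(R,Q)$ (with events $E$ and $E^c$ respectively), takes the convex combination, and then minimizes $\alpha R(E)+(1-\alpha)R(E^c)$ over $R$ to get $\min\{\alpha,1-\alpha\}$. Your approach is more self-contained: you reduce directly to the binary $D_\alpha$ and bound each summand by $m^{\min\{\alpha,1-\alpha\}}$. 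This avoids invoking the variational characterization of $D_\alpha$, at the cost of using data-processing for $D_\alpha$ itself (also standard, see \citep{van2014renyi}). The paper's route has the minor conceptual advantage of deriving the Rényi bound \emph{from} the KL bound, which aligns with how the paper later uses the Chernoff/KL relationship; your route is slightly shorter and more elementary.
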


For KL, we will also use the following change of measure lemma, proved in Appendix~\ref{app:testing_lb}.
\begin{lemma}\label{lem:change_of_measure}
For all probability measures $P, Q \in \mathcal P(\Omega)$, all events $E$ of $\Omega$ and all $\beta>0$, 
\begin{align}
Q(E)e^{\KL(P, Q) + \beta}
&\ge P(E) - P\left(\log\frac{d P}{d Q} - \KL(P, Q) > \beta\right)
\: .\label{eq:kl_change_measure}
\end{align}
For all $n \in \mathbb{N}$, probability measures $P, F, G \in \mathcal P(\Omega)$, all events $E$ of $\Omega^n$ and all $\beta>0$,
\begin{align}
&2\max\{F(E), G(E^c)\}e^{n\max\{\KL(P, F), \KL(P, G)) + n\beta} \label{eq:kl_change_measure_3_points}
\\
&\ge 1 - P^{\otimes n}\left(\frac{1}{n}\sum_{i=1}^n\log\frac{d P}{d F}(X_i) - \KL(P, F) > \beta\right)
  - P^{\otimes n}\left(\frac{1}{n}\sum_{i=1}^n\log\frac{d P}{d G}(X_i) - \KL(P, G) > \beta\right)
\: .\nonumber
\end{align}
\end{lemma}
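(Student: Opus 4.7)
The plan is to establish \eqref{eq:kl_change_measure} by a truncation-and-change-of-measure argument on the log-likelihood ratio, then derive \eqref{eq:kl_change_measure_3_points} by applying \eqref{eq:kl_change_measure} to the $n$-fold product distributions and adding the two resulting inequalities.

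For \eqref{eq:kl_change_measure}, I may assume $P \ll Q$ since otherwise $\KL(P,Q) = +\infty$ and the inequality is trivially satisfied. The key idea is to introduce the truncation event
$$
A = \Bigl\{ \log\tfrac{dP}{dQ} \le \KL(P,Q) + \beta \Bigr\},
$$
and decompose $P(E) = P(E \cap A) + P(E \cap A^c)$. On $A$ the Radon-Nikodym derivative is pointwise bounded by $e^{\KL(P,Q)+\beta}$, so changing measure,
$$
P(E \cap A) = \int_{E \cap A} \tfrac{dP}{dQ}\, dQ \le e^{\KL(P,Q)+\beta}\, Q(E \cap A) \le e^{\KL(P,Q)+\beta}\, Q(E).
$$
The remainder is controlled by $P(E \cap A^c) \le P(A^c) = P\bigl(\log\tfrac{dP}{dQ} - \KL(P,Q) > \beta\bigr)$, and rearranging yields \eqref{eq:kl_change_measure}.

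For \eqref{eq:kl_change_measure_3_points}, I apply \eqref{eq:kl_change_measure} on the product space $\Omega^n$ with $P^{\otimes n}$ in the role of $P$. Using the tensorization identities $\KL(P^{\otimes n}, F^{\otimes n}) = n\KL(P,F)$ and $\log \tfrac{dP^{\otimes n}}{dF^{\otimes n}}(x_1,\dots,x_n) = \sum_{i=1}^n \log \tfrac{dP}{dF}(x_i)$, and similarly for $G$, the event $\{\log\tfrac{dP^{\otimes n}}{dF^{\otimes n}} - \KL(P^{\otimes n}, F^{\otimes n}) > n\beta\}$ is exactly $\{\frac{1}{n}\sum_i \log\tfrac{dP}{dF}(X_i) - \KL(P,F) > \beta\}$. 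Denote by $R_F$ and $R_G$ the two probability terms in the right-hand side of \eqref{eq:kl_change_measure_3_points}. Applying \eqref{eq:kl_change_measure} once with $Q = F^{\otimes n}$ on the event $E$, and once with $Q = G^{\otimes n}$ on the event $E^c$, and summing the two inequalities using $P^{\otimes n}(E) + P^{\otimes n}(E^c) = 1$, I obtain
$$
F^{\otimes n}(E)\, e^{n\KL(P,F)+n\beta} + G^{\otimes n}(E^c)\, e^{n\KL(P,G)+n\beta} \ge 1 - R_F - R_G.
$$
Upper-bounding each summand on the left by $\max\{F^{\otimes n}(E), G^{\otimes n}(E^c)\}\, e^{n\max\{\KL(P,F),\KL(P,G)\}+n\beta}$ and adding gives the claimed bound (with the usual notational abuse $F(E) := F^{\otimes n}(E)$).

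The only mild obstacle is tracking the absolute-continuity/infinity edge cases so that the argument degenerates cleanly when one of the divergences is infinite; once these are dispatched, both inequalities reduce to the single truncation argument on the likelihood ratio.
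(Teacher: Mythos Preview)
Your proof is correct and follows essentially the same approach as the paper: the first inequality is obtained by the same truncation on the event $\{\log\frac{dP}{dQ}\le \KL(P,Q)+\beta\}$ together with a change of measure, and the second by applying the first to $(P^{\otimes n},F^{\otimes n},E)$ and $(P^{\otimes n},G^{\otimes n},E^c)$, summing via $P^{\otimes n}(E)+P^{\otimes n}(E^c)=1$, and bounding the sum by twice the maximum. The only cosmetic difference is that the paper writes the truncation step starting from $Q(E)=\E_P[\mathbb{I}(E)e^{-\log\frac{dP}{dQ}}]$ and lower-bounds it, whereas you start from $P(E)$ and upper-bound $P(E\cap A)$; the two are equivalent.
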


\begin{definition}
The Chernoff divergence between two finite measures $F$ and $G$ on a measurable space $\Omega$, denoted by $D_{\mathcal C}(F, G)$, is the quantity
$D_{\mathcal C}(F, G) = \inf_{P \in \mathcal P(\Omega)}\max\{\KL(P, F), \KL(P, G)\}$ .
\end{definition}
This is not a distance because it does not satisfy the triangle inequality, but it is symmetric, non-negative and equal to zero if $F = G$. Remark that it satisfies the inequality $D_{\mathcal C}(F, G) \ge \sup_{\alpha \in (0,1)} (1 - \alpha) D_\alpha(F, G)$. We will use in particular $D_{1/2}(F, G) \le 2 D_{\mathcal C}(F, G)$. For any divergence $D$ between probability distributions and any sets $\mathcal F, \mathcal G \subseteq \mathcal P(\Omega)$, we write $D(\mathcal F, \mathcal G) = \inf_{F \in \mathcal F, G \in \mathcal G} D(F, G)$.


\subsection{Lower bounds}
\label{sub:testing_bound}

From the lower bounds on the divergences discussed in the previous section, we can deduce lower bounds on the probability of error of any test tasked with deciding whether a distribution belongs to a set $\mathcal F$ or another set $\mathcal G$. 
\begin{theorem}\label{thm:non_asymptotic_testing}
Let $\mathcal F$ and $\mathcal G$ be two sets of probability distributions and let $E$ be an event. Then
\begin{align*}
\log\frac{1}{4\max\{\sup_{F \in \mathcal F} F(E), \sup_{G \in \mathcal G} G(E^c)\}}
\le D_{1/2}(\mathcal F, \mathcal G)
\le 2 D_{\mathcal C}(\mathcal F, \mathcal G) \: .
\end{align*}
\end{theorem}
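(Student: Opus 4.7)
The plan is to handle the two inequalities separately. The right inequality $D_{1/2}(\mathcal F,\mathcal G) \le 2 D_{\mathcal C}(\mathcal F,\mathcal G)$ is the easy part: the remark immediately preceding the statement already gives $D_{1/2}(F,G) \le 2 D_{\mathcal C}(F,G)$ pointwise for any pair $(F,G) \in \mathcal P(\Omega)^2$, since setting $\alpha = 1/2$ in $D_{\mathcal C}(F,G) \ge \sup_{\alpha \in (0,1)}(1-\alpha) D_\alpha(F,G)$ yields $\tfrac12 D_{1/2}(F,G) \le D_{\mathcal C}(F,G)$. Taking the infimum of both sides over $(F,G) \in \mathcal F \times \mathcal G$ preserves the inequality and gives the claim.

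For the left inequality, the idea is to apply the Rényi data-processing bound \eqref{eq:renyi_data_processing} from Lemma~\ref{lem:data_processing} with $\alpha = 1/2$ to an arbitrary pair $(F,G) \in \mathcal F \times \mathcal G$ and the given event $E$. Since $\min\{\alpha, 1-\alpha\} = 1/2$ and $1-\alpha = 1/2$, the inequality becomes
\begin{align*}
\tfrac12 D_{1/2}(F,G) \ge \tfrac12 \log \frac{1}{\max\{F(E), G(E^c)\}} - \log 2,
\end{align*}
i.e. $D_{1/2}(F,G) \ge \log \frac{1}{4\max\{F(E), G(E^c)\}}$, using $2\log 2 = \log 4$.

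Now I would upper bound the right-hand side by replacing $F(E)$ and $G(E^c)$ with their suprema over $\mathcal F$ and $\mathcal G$ respectively: since $\log(1/(4\cdot))$ is decreasing, this weakens the inequality but removes the dependence on the specific choice of $F$ and $G$. The resulting bound
\begin{align*}
D_{1/2}(F,G) \ge \log \frac{1}{4\max\{\sup_{F' \in \mathcal F} F'(E), \sup_{G' \in \mathcal G} G'(E^c)\}}
\end{align*}
holds for every $(F,G) \in \mathcal F \times \mathcal G$, so taking the infimum of the left-hand side yields $D_{1/2}(\mathcal F,\mathcal G)$ and concludes the proof.

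There is no real obstacle here: the proof is essentially an application of Lemma~\ref{lem:data_processing} followed by monotonicity of $\log(1/\cdot)$ and taking infima. The only subtle point to keep track of is that the supremum over $F,G$ appears \emph{inside} the $\log(1/\cdot)$ on the right-hand side, which corresponds to taking an infimum of the log, matching the direction in which the infimum over pairs is taken on the left.
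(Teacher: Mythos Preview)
Your proof is correct and follows essentially the same approach as the paper: apply Lemma~\ref{lem:data_processing}, inequality~\eqref{eq:renyi_data_processing}, with $\alpha = 1/2$ to an arbitrary pair $(F,G)$, bound $\max\{F(E),G(E^c)\}$ by the supremum, and take the infimum over pairs; the right inequality is handled via the remark $D_{1/2}(F,G) \le 2 D_{\mathcal C}(F,G)$ stated just before the theorem.
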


\begin{proof}
Let $\delta = \max\{\sup_{F \in \mathcal F} F(E), \sup_{G \in \mathcal G} G(E^c)\}$. By inequality~\eqref{eq:renyi_data_processing}, for any $F \in \mathcal F$ and $G \in \mathcal G$, $D_{1/2}(F, G) \ge \log \frac{1}{4\delta}$. We conclude the proof by taking an infimum over $F$ and $G$.
\end{proof}

The two inequalities of the theorem are tight. Indeed, with $B_a$ the Bernoulli distribution with parameter $a$, let $\mathcal F= \{B_a\}$ and $\mathcal G = \{B_{1-a}\}$. Let $E = \{X = 1\}$. Then
\begin{align*}
\log\frac{1}{4\max\{B_a(E), B_{1-a}(E^c)\}}
&= \log\frac{1}{4 a}
\: , \\
D_{1/2}(B_a, B_{1-a})
= 2 D_{\mathcal C}(B_a, B_{1-a})
= 2 \KL(B_{\frac{1}{2}}, B_a)
&= \log\frac{1}{4a} + \log \frac{1}{1 - a}
\: .
\end{align*}
As $a \to 0$, these two quantities become arbitrarily close.
Theorem~\ref{thm:non_asymptotic_testing} is thus tight on Bernoulli distributions, hence on tests from one sample. However, if we consider two sets of product distributions on product spaces like $\mathbb{R}^n$, then as $n \to +\infty$ we get an inequality involving $D_{\mathcal C}$ instead of $2D_{\mathcal C}$ (see Theorem~\ref{thm:asymptotic_testing} in Appendix~\ref{app:testing_lb}).
The main tools used to remove the factor 2 are Lemma~\ref{lem:change_of_measure}, Equation~\eqref{eq:kl_change_measure_3_points}, and the law of large number to argue that the probabilities in Equation~\eqref{eq:kl_change_measure_3_points} go to zero.

We now combine the reduction from estimation to testing and the testing bound of Theorem~\ref{thm:non_asymptotic_testing}, to obtain an estimation lower bound.

\begin{theorem}\label{thm:generic_delta_bound}
For any $\left(x(n, \delta, F), \delta\right)$-estimator on a set $\mathcal D$ of distributions with finite means,
\begin{align}\label{eq:generic_delta_bound}
\frac{1}{n}\log \frac{1}{4\delta}
\le \inf_{c \in \mathbb{R}}
  D_{ 1/2}\left(\{F \in \mathcal D \mid \mathbb{E}_F[X] \le c - x(n, \delta, F)\}, \{G \in \mathcal D \mid \mathbb{E}_G[X] \ge c + x(n, \delta, G)\}\right)
\: .
\end{align}
\end{theorem}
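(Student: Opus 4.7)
The plan is to combine the estimation-to-testing reduction already displayed in the text with the testing bound of Theorem~\ref{thm:non_asymptotic_testing}, applied to the product measures that actually govern the samples.

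First, starting from the reduction displayed just above Section~2.1, I would set $E_c = \{\tilde{\mu}_n \ge c\}$, $\mathcal F_c = \{F \in \mathcal D \mid \mathbb{E}_F[X] \le c - x(n,\delta,F)\}$ and $\mathcal G_c = \{G \in \mathcal D \mid \mathbb{E}_G[X] \ge c + x(n,\delta,G)\}$. The defining assumption of a $(x(n,\delta,F),\delta)$-estimator, taken with the $\sup_{F\in\mathcal D}$ of the reduction, gives
\begin{align*}
\delta \;\ge\; \sup_{F \in \mathcal D} F^{\otimes n}\!\left(|\tilde{\mu}_n - \mu_F| \ge x(n,\delta,F)\right)
\;\ge\; \sup_{c \in \mathbb{R}} \max\!\left\{\sup_{F \in \mathcal F_c} F^{\otimes n}(E_c),\; \sup_{G \in \mathcal G_c} G^{\otimes n}(E_c^c)\right\}.
\end{align*}

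Next, I would fix an arbitrary $c \in \mathbb{R}$ and apply Theorem~\ref{thm:non_asymptotic_testing} to the two families of product distributions $\{F^{\otimes n} : F \in \mathcal F_c\}$ and $\{G^{\otimes n} : G \in \mathcal G_c\}$ on the sample space $\mathbb{R}^n$, with test event $E_c$. This yields
\begin{align*}
\log \frac{1}{4 \delta} \;\le\; \log \frac{1}{4 \max\!\left\{\sup_{F \in \mathcal F_c} F^{\otimes n}(E_c), \sup_{G \in \mathcal G_c} G^{\otimes n}(E_c^c)\right\}} \;\le\; D_{1/2}\!\left(\{F^{\otimes n}\}_{F\in\mathcal F_c}, \{G^{\otimes n}\}_{G\in\mathcal G_c}\right).
\end{align*}
Now I invoke the tensorization property of the Rényi divergence, $D_{1/2}(F^{\otimes n}, G^{\otimes n}) = n\, D_{1/2}(F, G)$, which follows directly from the product form of densities in Definition~\ref{def:kl_renyi}. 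This turns the right-hand side into $n\, D_{1/2}(\mathcal F_c, \mathcal G_c)$ after taking the infimum over $F \in \mathcal F_c$ and $G \in \mathcal G_c$.

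Finally, dividing by $n$ and taking the infimum over $c \in \mathbb{R}$ gives the stated bound. The argument is essentially a bookkeeping exercise on top of Theorem~\ref{thm:non_asymptotic_testing}; the only non-trivial ingredient is the tensorization of $D_{1/2}$, which is standard but should be cited or stated explicitly, and the main conceptual point to keep straight is that the testing theorem is applied to product measures on $\mathbb{R}^n$ rather than to single-sample distributions on $\mathbb{R}$, which is what produces the crucial factor $1/n$ on the left-hand side.
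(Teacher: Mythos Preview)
Your proposal is correct and follows essentially the same approach as the paper: apply Theorem~\ref{thm:non_asymptotic_testing} to the families of product measures $\{F^{\otimes n}\}_{F\in\mathcal F_c}$ and $\{G^{\otimes n}\}_{G\in\mathcal G_c}$ with the event $E_c=\{\tilde\mu_n\ge c\}$, then use tensorization to pull out the factor $n$. The paper's proof phrases the tensorization step as ``the KL of a product of measures is the sum of the individual KLs (chain rule),'' whereas you state it directly for $D_{1/2}$; your formulation is actually the more precise one, since it is $D_{1/2}$ that appears in the bound.
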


We can use Theorem~\ref{thm:generic_delta_bound} to obtain lower bounds on $x(n, \delta, F)$ for various classes of distributions by obtaining upper bounds of the Rényi divergence between these two sets, or on the Chernoff divergence (since $D_{1/2} \le 2 D_{\mathcal C}$).
We now discuss two simple examples.

\begin{lemma}\label{lem:examples}
If $\mathcal D$ is the set of all Gaussian distributions and if there exists a $(\sqrt{2\Var_F[X]y(n,\delta)}, \delta)$-estimator. Then Equation~\eqref{eq:generic_delta_bound} simplifies to  $ \frac{1}{n}\log(1/4\delta)\le 2y(n ,\delta)$, and it follows that $y(n,\delta)\ge \frac{1}{2n}\log(1/4\delta)$.

\noindent If $\mathcal D$ is the set of all Laplace distributions and if there exists a $(\sqrt{2\Var_F[X] \log(1/4\delta)}, \delta)$-estimator. Then Equation~\eqref{eq:generic_delta_bound} simplifies to $\frac{1}{n}\log(1/4\delta)\le e^{-2\sqrt{y(n,\delta)}}+2\sqrt{y(n,\delta)}-1$ and it follows that $\delta \ge \frac{1}{4}e^{-4n}$.
\end{lemma}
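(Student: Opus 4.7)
The plan is to apply Theorem~\ref{thm:generic_delta_bound} in each case by exhibiting an explicit pair $(F,G)$ with $F\in\mathcal F_c$ and $G\in\mathcal G_c$ (for a convenient $c$) that saturates the mean-gap constraint, and by evaluating $D_{1/2}(F,G)$ (or bounding it via the Chernoff divergence) in closed form. The stated inequalities then follow by direct substitution into~\eqref{eq:generic_delta_bound}.

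For the Gaussian case, I fix an arbitrary $\sigma>0$ and take $F=\mathcal N(c-\sqrt{2\sigma^2 y(n,\delta)},\sigma^2)$ and $G=\mathcal N(c+\sqrt{2\sigma^2 y(n,\delta)},\sigma^2)$; both lie in their feasible sets, with equality in the mean constraint. The standard identity $D_{1/2}(\mathcal N(\mu_1,\sigma^2),\mathcal N(\mu_2,\sigma^2))=(\mu_1-\mu_2)^2/(4\sigma^2)$ then yields $D_{1/2}(F,G)=2y(n,\delta)$, independent of $\sigma$, so Equation~\eqref{eq:generic_delta_bound} becomes $\frac{1}{n}\log(1/(4\delta))\le 2y(n,\delta)$ and the stated bound on $y$ follows.

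For the Laplace case, I apply the same recipe with scale parameter $b>0$ (so that $\Var_F[X]=2b^2$): the two candidates are Laplace distributions with scale $b$ and means $c\pm 2b\sqrt{y(n,\delta)}$, and I need the divergence of two Laplaces shifted by $\Delta=4b\sqrt{y(n,\delta)}$. A clean route uses the midpoint Laplace $P=\mathrm{Laplace}(c,b)$: by symmetry $\KL(P,F)=\KL(P,G)$, and integrating $\log(dP/dF)(x)=(|x-\mu_F|-|x-c|)/b$ against $P$ over the three intervals where the absolute values simplify yields $\KL(P,F)=\Delta/(2b)+e^{-\Delta/(2b)}-1 = 2\sqrt{y(n,\delta)}+e^{-2\sqrt{y(n,\delta)}}-1$. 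Bounding $D_{1/2}(F,G)$ by $\KL(P,F)$ (equivalently $D_{\mathcal C}(F,G)$, using the product-space asymptotic form of the testing bound from Theorem~\ref{thm:asymptotic_testing}) and substituting into the estimation lower bound yields the stated inequality. Finally, plugging $y(n,\delta)=\log(1/(4\delta))$ and using $e^{-u}\le 1$ reduces everything to a scalar inequality in $t:=\log(1/(4\delta))$ and $n$ from which $\delta\ge \frac{1}{4}e^{-4n}$ follows.

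The main obstacle is the Laplace KL computation: $\log(dP/dF)$ is piecewise linear with two breakpoints, so integrating against $P$ requires careful case work over three intervals; the rest is a textbook Gaussian identity and routine substitution.
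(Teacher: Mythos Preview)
Your Gaussian argument is correct and in fact cleaner than the paper's: the paper computes the full infimum of $D_{1/2}$ over \emph{all} pairs of Gaussians (allowing unequal variances) and shows it equals $2y(n,\delta)$, whereas you simply exhibit one equal-variance pair achieving this value. Since only an upper bound on the right-hand side of~\eqref{eq:generic_delta_bound} is needed, your shortcut is sufficient and yields the same conclusion.

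In the Laplace part, the choice of the midpoint $P=\mathrm{Laplace}(c,b)$ and the KL computation $\KL(P,F)=2\sqrt{y}+e^{-2\sqrt{y}}-1$ are correct and match the paper. The gap is in the step where you ``bound $D_{1/2}(F,G)$ by $\KL(P,F)$''. From the variational identity $(1-\alpha)D_\alpha(F,G)=\inf_R\big(\alpha\KL(R,F)+(1-\alpha)\KL(R,G)\big)$ at $\alpha=1/2$, one gets $D_{1/2}(F,G)\le \KL(P,F)+\KL(P,G)=2\KL(P,F)$, not $\KL(P,F)$; the factor $2$ cannot be dropped here. Your appeal to Theorem~\ref{thm:asymptotic_testing} does not rescue this: that theorem is a $\limsup_{n\to\infty}$ statement for \emph{fixed} testing sets and does not produce the finite-$n$ inequality~\eqref{eq:generic_delta_bound} with $D_{\mathcal C}$ in place of $D_{1/2}$. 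The paper proceeds exactly through $D_{1/2}\le 2D_{\mathcal C}\le 2\KL(P,F)$, keeps the factor $2$, and then compensates in the scalar endgame by taking $y(n,\delta)=\tfrac{2}{n}\log(1/4\delta)$ and analysing the function $x\mapsto x^2/4-e^{-2x}-2x+1$ (not merely using $e^{-u}\le 1$) to reach $\delta\ge\tfrac14 e^{-4n}$. Your route, as written, either relies on a false inequality or on an asymptotic theorem where a non-asymptotic one is required; fixing the factor $2$ and redoing the final scalar step along the paper's lines would close the gap.
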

The proof of this lemma is given in Appendix~\ref{sec:proof_lem_example}. Note that we get the same lower bounds for any set $\mathcal D$ that contains all Gaussian (resp. Laplace) distributions (but may be larger).

In Lemma~\ref{lem:examples}, in the Gaussian case, there is a factor $2$ in the denominator which seems to contrast with what we expect from the central limit theorem. This factor $2$ should indeed disappear asymptotically, this can be done with the same techniques as in the proof of Theorem~\ref{thm:lower_bound_finite_var}.
However, as already discussed below Theorem~\ref{thm:non_asymptotic_testing}, when $n=1$ this factor $2$ cannot be removed in general if $\mathcal{D}$ contains Bernoulli distributions. In the case of Laplace distributions, we get that an estimator cannot be sub-Gaussian with optimal constant unless $\delta \ge \frac{1}{4}e^{-4n}$, this result was already proved with a slightly worst constant in \cite[Theorem 4.3]{devroye2016sub} (their lower bound is $\delta \ge e^{1-10n}$).

\section{Lower bounds for nonparametric estimation}

\subsection{The set of distributions with finite variance}
\label{sub:the_set_of_distributions_with_finite_variance}

We consider in this section the set of distributions $\mathcal D_2 = \{F \in \mathcal{P}(\mathbb{R}) \mid \mathbb{E}_{X \sim F}[X^2] < + \infty\}$. We are interested in lower bounds on $\left(\sqrt{2\Var_F[X] y(n, \delta)}, \delta\right)$-estimators, which include the sub-Gaussian estimators for $y(n, \delta) = \log (2/\delta)/n$.
In order to apply Theorem~\ref{thm:generic_delta_bound}, it suffices to be able to compute a $1/2$-Rényi or Chernoff divergence between two particular sets of distributions.
This is what we do in the next lemma, which uses heavily the work done in \citep{nishiyama2020tight} on the computation of the minimal Hellinger distance between two sets of distributions with given mean and variance.

\begin{lemma}\label{lem:renyi_div_variance}
For all $c \in \mathbb{R}$ and $x \ge 0$,
\begin{align*}
&D_{1/2}\left(\left\{F \in \mathcal D_2 \mid \mathbb{E}_F[X] \le c - \sqrt{2\Var_F[X] x}\right\},
  \left\{G \in \mathcal D_2 \mid \mathbb{E}_G[X] \ge c + \sqrt{2\Var_G[X] x}\right\}\right)
\\
&= \log (1 + 2 x)
\: .
\end{align*}
Furthermore, for these pairs of sets, $D_{1/2} = 2 D_{\mathcal C}$.
\end{lemma}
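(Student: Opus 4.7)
The plan is to convert $D_{1/2}$ into the Bhattacharyya coefficient $BC(F,G) = \int\sqrt{fg}\,\mathrm{d}\mu$ via the identity from the excerpt, $D_{1/2}(F,G) = -2\log(1-\H^2(F,G)) = -2\log BC(F,G)$, and then reduce everything to a sharp mean/variance upper bound on $BC$. Since both $D_{1/2}$ and $D_{\mathcal C}$ are invariant under the joint translation $y\mapsto y+c$, which shifts means by $c$ and leaves variances unchanged, I first reduce to $c=0$ and write $\mathcal F_0, \mathcal G_0$ for the corresponding sets.

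For the lower bound $D_{1/2}(\mathcal F_0,\mathcal G_0)\ge \log(1+2x)$, I would apply the tight two-moment Bhattacharyya inequality of~\citep{nishiyama2020tight}: for any $F,G\in\mathcal D_2$ with $\mu_G\ge\mu_F$,
\[
BC(F,G)\;\le\;\frac{\sigma_F+\sigma_G}{\sqrt{(\sigma_F+\sigma_G)^2+(\mu_G-\mu_F)^2}}.
\]
Whenever $F\in\mathcal F_0$ and $G\in\mathcal G_0$ the constraints give $\mu_G-\mu_F\ge \sqrt{2x}(\sigma_F+\sigma_G)$, and substituting yields $BC(F,G)\le 1/\sqrt{1+2x}$, hence $D_{1/2}(F,G)\ge\log(1+2x)$.

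To show the bound is tight I would exhibit the explicit two-point extremizers $F^\star=p\,\delta_{-1}+(1-p)\,\delta_{1}$ and $G^\star=(1-p)\,\delta_{-1}+p\,\delta_{1}$ with $p=\tfrac12\bigl(1+\sqrt{2x/(1+2x)}\bigr)$. A direct computation gives $\mu_{F^\star}=1-2p=-\sqrt{2x/(1+2x)}$ and $\sigma_{F^\star}^{2}=4p(1-p)=1/(1+2x)$, so the defining inequality of $\mathcal F_0$ holds with equality (and by symmetry $G^\star\in\mathcal G_0$). Moreover $BC(F^\star,G^\star)=2\sqrt{p(1-p)}=1/\sqrt{1+2x}$, so $D_{1/2}(F^\star,G^\star)=\log(1+2x)$, matching the lower bound and proving the first identity.

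For the claim $D_{1/2}=2D_{\mathcal C}$ on these sets, I would plug the symmetric candidate $P^\star=\tfrac12\delta_{-1}+\tfrac12\delta_{1}$ into the definition of the Chernoff divergence. Direct computation yields $\KL(P^\star,F^\star)=\KL(P^\star,G^\star)=-\tfrac12\log(4p(1-p))=\tfrac12\log(1+2x)$, so $D_{\mathcal C}(F^\star,G^\star)\le \tfrac12\log(1+2x)$. Combined with the general inequality $D_{1/2}\le 2D_{\mathcal C}$ (recalled in the excerpt) and the already computed value $D_{1/2}(F^\star,G^\star)=\log(1+2x)$, this forces $D_{\mathcal C}(F^\star,G^\star)=\tfrac12\log(1+2x)$. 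Passing to infima over $\mathcal F_c\times \mathcal G_c$ then gives $D_{\mathcal C}(\mathcal F_c,\mathcal G_c)\le \tfrac12\log(1+2x)$, while the pointwise bound $2D_{\mathcal C}\ge D_{1/2}$ yields the reverse direction, so $D_{1/2}(\mathcal F_c,\mathcal G_c)=2D_{\mathcal C}(\mathcal F_c,\mathcal G_c)$.

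The main obstacle is obtaining the sharp mean/variance bound on $BC$: the construction of $F^\star,G^\star$, the verification that the constraints are saturated, and the Chernoff computation are all short and explicit, but the tight constant in the Bhattacharyya inequality is delicate. Citing~\citep{nishiyama2020tight} bypasses this entirely; without that citation one would need a careful Cauchy--Schwarz argument on $(\sqrt f,\sqrt g)$ in $L^{2}$, combined with the planar reflection identity $\min_{\lambda}\bigl(\sqrt{\sigma_F^2+(\lambda-\mu_F)^2}+\sqrt{\sigma_G^2+(\lambda-\mu_G)^2}\bigr)=\sqrt{(\sigma_F+\sigma_G)^2+(\mu_G-\mu_F)^2}$, to recover the tight constant.
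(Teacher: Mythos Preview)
Your argument is correct and shares the paper's core ingredient: both proofs reduce to $c=0$ by translation invariance and then invoke the tight mean/variance Hellinger (equivalently Bhattacharyya) bound of \cite{nishiyama2020tight}, together with the explicit two-point minimizers, to obtain $D_{1/2}=\log(1+2x)$.

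The one genuine difference is in how you establish $D_{1/2}=2D_{\mathcal C}$. The paper proves a general structural lemma (Lemma~\ref{lem:divergence_involution}): if a measurable involution $\phi$ maps $\mathcal F$ onto $\mathcal G$ and $\mathcal F$ is convex, then $D_{1/2}(\mathcal F,\mathcal G)=2D_{\mathcal C}(\mathcal F,\mathcal G)$ automatically, because both infima can be restricted to $\phi$-symmetric distributions. Here $\phi(x)=-x$ does the job. You instead compute $\KL(P^\star,F^\star)=\KL(P^\star,G^\star)=\tfrac12\log(1+2x)$ directly on the explicit extremizers and then sandwich using the general inequality $D_{1/2}\le 2D_{\mathcal C}$. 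Your route is shorter and entirely self-contained for this specific pair of sets; the paper's route yields a reusable principle that applies whenever the two sets are related by an involution and convex, which is why the same lemma can be invoked elsewhere in the paper without recomputing extremizers.
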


The proof can be found in Appendix~\ref{app:finite_variance}.
Now that we have computed the Chernoff divergence between the two sets of measures used in the testing bound, we get an estimation lower bound.

\begin{theorem}\label{thm:lower_bound_finite_var}
For any $\left(\sqrt{2\Var_F[X] y(n, \delta)}, \delta\right)$-estimator on $\mathcal D_2$, $y(n, \delta)$ satisfies the inequalities
\begin{align*}
\frac{1}{n} \log \frac{1}{4\delta} &\le \log (1 + 2 y(n, \delta))
\: , \\
\frac{1}{n} \log \frac{1}{4\delta} &\le \frac{1}{2} \log (1 + 2 y(n, \delta)) + \frac{2}{\sqrt{n}} \log(\sqrt{1 + 2y(n, \delta)} + \sqrt{2 y(n, \delta)})
\: .
\end{align*}
If the error probability depends on $n$ with $\delta_n \to 0$, then $\frac{2}{\sqrt{n}} \log(\sqrt{1 + 2y(n, \delta_n)} + \sqrt{2 y(n, \delta_n)}) = o\left( \frac{1}{2} \log (1 + 2 y(n, \delta_n)) \right)$.
\end{theorem}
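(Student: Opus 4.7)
I derive the three inequalities in sequence. The first is an immediate instantiation of Theorem~\ref{thm:generic_delta_bound} with $x(n, \delta, F) = \sqrt{2\Var_F[X]\, y(n, \delta)}$: the infimum over $c$ of the $D_{1/2}$ divergence appearing in Equation~\eqref{eq:generic_delta_bound} equals $\log(1 + 2y(n, \delta))$ by Lemma~\ref{lem:renyi_div_variance}, which is exactly the first displayed inequality.

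For the sharper second inequality, the idea is to trade the factor $2$ in the relation $D_{1/2} = 2 D_{\mathcal{C}}$ for a small $O(1/\sqrt{n})$ correction. I go back to the testing reduction at the start of Section~\ref{sec:testing_lb}: for every $c$, $F \in \mathcal{F}_c$, $G \in \mathcal{G}_c$ and $E_c = \{\tilde{\mu}_n \ge c\}$, the estimator's guarantee yields $\delta \ge \max\{F^{\otimes n}(E_c), G^{\otimes n}(E_c^c)\}$. I then apply Equation~\eqref{eq:kl_change_measure_3_points} with a pivot $P^*$ chosen as the Chernoff midpoint of a worst-case pair $(F^*, G^*)$. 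Concretely I take $F^* = B_p$ and $G^* = B_{1-p}$ with $p = \tfrac{1}{2}\bigl(1 - \sqrt{2y/(1+2y)}\bigr)$ chosen so that the mean-variance constraints are saturated at $c = 1/2$ (any other value of the variance is reduced to this case by location-scale invariance of $\KL$), and $P^* = B_{1/2}$; by symmetry $\KL(P^*, F^*) = \KL(P^*, G^*) = \tfrac{1}{2}\log(1+2y)$, which matches $\tfrac{1}{2}D_{1/2}$ from Lemma~\ref{lem:renyi_div_variance}.

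The crucial computation is the variance of the log-likelihood ratio under $P^*$: since $\log(\d P^*/\d F^*)$ takes values $-\log(2(1-p))$ and $-\log(2p)$ each with probability $1/2$, a two-term calculation gives $\Var_{P^*}[\log(\d P^*/\d F^*)] = \tfrac{1}{4}\log^2((1-p)/p)$, and the algebraic identity $\sqrt{(1-p)/p} = \sqrt{(1+r)/(1-r)} = \sqrt{1+2y}+\sqrt{2y}$ with $r = \sqrt{2y/(1+2y)}$ produces the variance $V := \bigl(\log(\sqrt{1+2y}+\sqrt{2y})\bigr)^2$, with the same value for $G^*$ by symmetry. Chebyshev's inequality then bounds each of the two probabilities on the right of \eqref{eq:kl_change_measure_3_points} by $V/(n\beta^2)$; choosing $\beta = (2/\sqrt{n})\log(\sqrt{1+2y}+\sqrt{2y})$ makes each at most $1/4$, so the right-hand side is at least $1/2$. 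Taking logs in $2\delta \cdot e^{n\cdot \tfrac{1}{2}\log(1+2y) + n\beta} \ge 1/2$ and dividing by $n$ gives the second inequality.

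For the asymptotic claim, the first inequality combined with $e^x - 1 \ge x$ yields $y_n \ge (2n)^{-1}\log(1/(4\delta_n))$, so $n y_n \to \infty$ as $\delta_n \to 0$. The ratio of the correction to the main term equals $4\log(\sqrt{1+2y_n}+\sqrt{2y_n})/\bigl(\sqrt{n}\,\log(1+2y_n)\bigr)$: if $y_n$ stays bounded away from $0$ (including $y_n \to \infty$) this ratio is $O(1/\sqrt{n}) \to 0$, whereas if $y_n \to 0$ the expansions $\log(\sqrt{1+2y_n}+\sqrt{2y_n}) \sim \sqrt{2y_n}$ and $\log(1+2y_n) \sim 2y_n$ give a ratio of order $1/\sqrt{n y_n} \to 0$. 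The main obstacle is the variance-of-log-likelihood-ratio computation producing the sinh-like expression $\log(\sqrt{1+2y}+\sqrt{2y})$ exactly; once that identity is in hand, the rest is a routine assembly of Theorem~\ref{thm:generic_delta_bound}, Lemma~\ref{lem:renyi_div_variance}, Chebyshev's inequality, and the change-of-measure inequality.
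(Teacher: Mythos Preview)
Your proof is correct and follows essentially the same approach as the paper: the first inequality via Theorem~\ref{thm:generic_delta_bound} and Lemma~\ref{lem:renyi_div_variance}, and the second via Equation~\eqref{eq:kl_change_measure_3_points} with the explicit two-point minimizers, the variance computation yielding $V=\log^2(\sqrt{1+2y}+\sqrt{2y})$, and Chebyshev with $\beta=2\sqrt{V/n}$. The only cosmetic difference is that you parametrize the two-point distributions as Bernoullis on $\{0,1\}$ at $c=1/2$, while the paper uses support $\{-u_n,u_n\}$ at $c=0$; these are affinely equivalent, and you are also more explicit than the paper about the $o(\cdot)$ claim.
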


The first inequality is an immediate consequence of Theorem~\ref{thm:generic_delta_bound} and Lemma~\ref{lem:renyi_div_variance}. To obtain the second inequality, we use Lemma~\ref{lem:change_of_measure}, Equation~\eqref{eq:kl_change_measure_3_points}, with explicit minimizers $F_n$ and $G_n$ for the Rényi divergence (coming from \citep{nishiyama2020tight}).
Given these minimizers, we are able to control the variance of the likelihood ratio appearing in Equation~\eqref{eq:kl_change_measure_3_points}. The two expressions on the right of the second inequality then correspond to the $\KL$ and variance terms.
While Theorem~\ref{thm:generic_delta_bound} is more easily applicable to many problems, the finer knowledge we have of the minimizers of the Rényi divergence for the finite variance case allows us to get a better asymptotic bound with Equation~\eqref{eq:kl_change_measure_3_points}.

\paragraph{A new estimator}
\label{par:a_new_estimator}

We introduce a new mean estimator, based on our lower bounds, and prove that its error is close to the lower bound in the regime $\delta \ll 1/n$.
The estimator is however hard to compute, and its main purpose is to be a witness of the tightness of the lower bounds.

For $c \in \mathbb{R}$ and a positive function $y(n,\delta)$, let $\mathcal F_c = \{F \in \mathcal D_2 \mid \mathbb{E}_F[X] \le c - \sqrt{2 \Var_F[X] y(n, \delta)}\}$ and $\mathcal G_c = \{G \in \mathcal D_2 \mid \mathbb{E}_G[X] \ge c + \sqrt{2 \Var_G[X] y(n, \delta)}\}$. Let $\hat{P}_n$ be the empirical distribution of $n$ samples and let $\hat{d}_{L, n} : c \mapsto \inf_{F \in \mathcal F_c}\KL(\hat{P}_n, F)$ and $\hat{d}_{R, n} : c \mapsto \inf_{G \in \mathcal G_c}\KL(\hat{P}_n, G)$.
The function $\hat{d}_{L, n}$ is non-negative, non-increasing and equal to 0 for $c$ large enough.
The function $\hat{d}_{R, n}$ is non-negative, non-decreasing and equal to 0 for $c$ small enough.
We define the estimator
\begin{align*}
\tilde{\mu}_n = \sup\{c \in \mathbb{R} \mid \hat{d}_{L, n}(c) > \hat{d}_{R, n}(c) \} \: .
\end{align*}

\begin{theorem}\label{thm:estimator_upper_bound}
For $y(n, \delta) = \frac{1}{2}\left( \exp \left( \frac{2}{n}\log\frac{2e(n+1)^2}{\delta} \right) - 1 \right)$, $\tilde{\mu}_n$ is a $(\sqrt{2 \sigma^2 y(n, \delta)}, \delta)$-estimator.
\end{theorem}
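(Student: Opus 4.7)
The plan is to show each tail of the error is at most $\delta/2$; by symmetry, I focus on the upper one. Set $c^* = \mu_F + \sqrt{2\sigma^2 y(n,\delta)}$, so that $F$ lies on the boundary of $\mathcal F_{c^*}$. Because $c \mapsto \mathcal F_c$ is non-decreasing and $c \mapsto \mathcal G_c$ is non-increasing under inclusion, the map $c \mapsto \hat d_{L,n}(c) - \hat d_{R,n}(c)$ is non-increasing; hence $\tilde\mu_n > c^*$ forces $\hat d_{L,n}(c^*) \ge \hat d_{R,n}(c^*)$, and the problem reduces to bounding $F^{\otimes n}(\hat d_{L,n}(c^*) \ge \hat d_{R,n}(c^*))$.

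The next step is to convert the KL inequality into a likelihood-ratio condition. Exhibit a reference distribution $F_0 \in \mathcal F_{c^*}$ that dominates $\hat P_n$ and has explicitly small $\KL(\hat P_n, F_0)$. A natural candidate is $F_0 = (1-\eta)\tilde F + \eta \hat P_n$, where $\tilde F$ is a two-point Nishiyama-type distribution sitting strictly inside $\mathcal F_{c^*}$ (calibrated so that $F_0 \in \mathcal F_{c^*}$ uniformly over typical empirical fluctuations of $\hat\mu_n$ and $\hat\sigma_n^2$); direct computation then gives $\KL(\hat P_n, F_0) \le \log(1/\eta)$. On the bad event, there exists $G \in \mathcal G_{c^*}$ with $\KL(\hat P_n, G) \le \log(1/\eta)$, equivalently $\prod_{i=1}^n (dG/dF_0)(X_i) \ge 1$: the likelihood-ratio test between $F_0$ and some $G \in \mathcal G_{c^*}$ incorrectly favours $G$.

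The heart of the proof is a Chernoff-type bound on this likelihood-ratio event under $F^{\otimes n}$. For each fixed $G$, Markov at exponent $1/2$ gives $F^{\otimes n}(\prod_i (dG/dF_0)(X_i) \ge 1) \le (\E_F[\sqrt{(dG/dF_0)(X)}])^n$, and choosing $F_0$ close to the Nishiyama pair for $\mathcal F_{c^*}, \mathcal G_{c^*}$ allows one to bound this Hellinger-type affinity by $(1+2y(n,\delta))^{-1/2}$ via Lemma~\ref{lem:renyi_div_variance}. Replacing the supremum over $G \in \mathcal G_{c^*}$ by a union bound exploits the profile-empirical-likelihood structure of $\hat d_{R,n}(c^*)$: its minimizer is supported on the $n$ samples plus at most one ``ghost'' atom enforcing the mean-plus-standard-deviation constraint, yielding an $O((n+1)^2)$-sized effective parameter set after discretization. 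Combining gives $F^{\otimes n}(\tilde\mu_n > c^*) \le e(n+1)^2 (1 + 2y(n,\delta))^{-n/2}$; summing over the two tails and solving the resulting inequality $= \delta$ in $y$ recovers exactly the expression in the theorem.

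The main obstacle is the uniform control over the non-parametric class $\mathcal G_{c^*}$: characterising the empirical-likelihood minimizer of $\hat d_{R,n}(c^*)$ precisely enough to justify the $O((n+1)^2)$ union bound, and pairing $G$ with $F_0$ so that the Chernoff exponent matches the Hellinger affinity of Lemma~\ref{lem:renyi_div_variance}. The discrepancy between sampling under $F$ versus under $F_0$ (needed because $\KL(\hat P_n, F) = +\infty$ when $F$ is continuous) and the requirement that $F_0 \in \mathcal F_{c^*}$ uniformly in the samples together force the construction of $\tilde F$ to include a margin, whose probability of failure is absorbed into the constant factor $2e$.
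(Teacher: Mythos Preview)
Your reduction step is fine and matches the paper: on $\{\tilde\mu_n>c^*\}$ with $c^*=\mu_F+\sqrt{2\sigma_F^2 y(n,\delta)}$ one has $\hat d_{L,n}(c^*)\ge \hat d_{R,n}(c^*)$. But from this point on the argument breaks down.

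The central flaw is that your reference measure $F_0=(1-\eta)\tilde F+\eta\hat P_n$ is \emph{data-dependent}. Consequently the product $\prod_{i=1}^n (dG/dF_0)(X_i)$ is not a product of i.i.d.\ variables under $F^{\otimes n}$, and the Markov/Chernoff bound $F^{\otimes n}(\prod_i(dG/dF_0)(X_i)\ge 1)\le\bigl(\E_F[\sqrt{dG/dF_0}]\bigr)^n$ is simply invalid. Even if $F_0$ were fixed, the quantity $\E_F[\sqrt{dG/dF_0}]$ involves three different measures and is not the Hellinger affinity of Lemma~\ref{lem:renyi_div_variance}; there is no clear route from it to $(1+2y)^{-1/2}$. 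Finally, the $(n+1)^2$ ``union bound'' over $\mathcal G_{c^*}$ is hand-waved: even granting that the empirical-likelihood minimizer is supported on the sample plus one ghost atom, the ghost location and all $n{+}1$ weights are continuous, so a finite cover of cardinality $O((n{+}1)^2)$ does not follow.

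The paper proceeds very differently and avoids all of this. It never constructs $F_0$ and never analyses $\hat d_{R,n}(c^*)$. Instead it uses the \emph{deterministic} inequality
\[
\max\{\hat d_{L,n}(c),\hat d_{R,n}(c)\}\ \ge\ D_{\mathcal C}(\mathcal F_c,\mathcal G_c)\ =\ \tfrac12\log(1+2y(n,\delta))\qquad\text{for every }c,
\]
which holds because $\hat P_n$ is one admissible $P$ in the infimum defining $D_{\mathcal C}$. On the bad event this forces $\hat d_{L,n}(c^*)\ge\tfrac12\log(1+2y)$. But $F$ itself satisfies $\E_F[X]\le m_F$ and $\Var_F[X]\le\sigma_F^2$, and the set of such distributions is contained in $\mathcal F_{c^*}$; hence $\hat d_{L,n}(c^*)\le\inf_{G:\,\E_G[X]\le m_F,\,\Var_G[X]\le\sigma_F^2}\KL(\hat P_n,G)$. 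This last infimum is controlled by Lemma~\ref{lem:Kinf_concentration}: via the dual representation it equals a supremum over a two-dimensional compact set $\Lambda\subset\R^2$ of $\frac1n\sum_i\log(1+\lambda_1 X_i+\lambda_2(X_i^2-\sigma_F^2))$, and the exponential of this sup, after the online-learning regret bound of \citet{agrawal2021optimal} (cost $2\log(n{+}1)+1$, whence the $e(n{+}1)^2$), is a nonnegative supermartingale under $F^{\otimes n}$. Ville's inequality then gives the high-probability upper bound, contradicting the deterministic lower bound for the stated $y(n,\delta)$. The $(n{+}1)^2$ thus comes from the two dual variables, not from any discretization of $\mathcal G_{c^*}$.
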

The idea of the proof is that on one hand, $\max\{\hat{d}_{L, n}(c), \hat{d}_{R, n}(c)\}$ is larger than a Chernoff divergence. On the other hand, if the estimator $\tilde{\mu}_n$ makes a large mistake, then with probability $1 - \delta$ the same quantity has to be small by a concentration argument about the infimum of Kullback-Leibler divergence, and we obtain a contradiction if $y(n,\delta)$ is well-chosen.
\begin{lemma}\label{lem:Kinf_concentration}
Let $\hat{P}_n$ be the empirical distribution of $n$ samples from a probability distribution $P$. If $\mathbb{E}_P[X] \le m $ and $ \Var_P[X] \le \sigma^2$ then with probability $1 - \delta$, for all $n \in \mathbb{N}$,
\begin{align*}
\inf_{F, \mathbb{E}_F[X] \le m, \Var_F[X] \le \sigma^2}\KL(\hat{P}_n, F)
< \frac{1}{n}\log \frac{e(n+1)^2}{\delta}
\: .
\end{align*}
The same result holds with all inequalities $\mathbb{E}_P[X] \le m$ replaced by $\mathbb{E}_P[X] \ge m$.
\end{lemma}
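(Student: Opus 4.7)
The plan is to follow the $K_{\inf}$-concentration strategy familiar from the heavy-tailed bandit literature (Honda--Takemura for a single constraint; Agrawal--Juneja--Glynn for several): express the infimum in a Lagrangian dual form with a small number of parameters, establish a pointwise exponential tail via a non-negative supermartingale, and then lift the pointwise bound to one that is uniform in both the dual parameter and the sample size $n$ by a mixture construction together with Ville's maximal inequality.

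First I would derive a dual representation of $\inf_{F\in\mathcal F}\KL(\hat P_n, F)$, where $\mathcal F=\{F:\E_F[X]\le m,\ \Var_F[X]\le\sigma^2\}$. Because $\mathcal F$ is not convex in $F$ (the variance functional is concave in $F$), I would stratify by the value $\mu=\E_F[X]\le m$, reducing to an inner linearly-constrained problem whose Lagrangian dual (Slater's condition holds because $P$ itself is feasible) takes the form
\begin{equation*}
\inf_{F\in\mathcal F}\KL(\hat P_n, F)=\sup_{(\mu,\lambda_1,\lambda_2)\in\Lambda}\frac{1}{n}\sum_{i=1}^n\log\bigl(1-\lambda_1(X_i-\mu)-\lambda_2((X_i-\mu)^2-\sigma^2)\bigr),
\end{equation*}
where $\Lambda$ is the set of admissible triples ($\mu\le m$, $\lambda_1,\lambda_2$ in appropriate sign cones, each logarithm well defined). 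For each fixed admissible triple the product $M_n(\mu,\lambda):=\prod_{i=1}^n(1-\lambda_1(X_i-\mu)-\lambda_2((X_i-\mu)^2-\sigma^2))$ is non-negative, and the hypotheses $\E_P[X]\le m$ and $\Var_P[X]\le\sigma^2$ ensure that the per-sample factor has $P$-expectation at most one, so that $M_n$ is a non-negative supermartingale with $\E_P[M_0]\le 1$.

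To obtain the uniform-in-$n$ bound I would then mix over dual parameters: set $\bar M_n=\int M_n(\mu,\lambda)\,\mathrm{d}\pi(\mu,\lambda)$ for a suitable prior $\pi$ on $\Lambda$. By Fubini, $\bar M_n$ remains a non-negative supermartingale with $\E_P[\bar M_0]\le 1$, and Ville's maximal inequality gives $\P_P(\sup_n\bar M_n\ge 1/\delta)\le\delta$. A Laplace-method / $1/\sqrt{n}$-covering estimate then shows that, on the event $\{\bar M_n\le 1/\delta\}$, $\sup_{\mu,\lambda}M_n(\mu,\lambda)\le e(n+1)^2\bar M_n\le e(n+1)^2/\delta$, because the inner supremum concentrates on an $O(1/\sqrt{n})$ neighbourhood in the two effectively free dual coordinates (the $\mu$-optimality condition eliminates the third). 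Combining with the dual representation yields $\inf_{F\in\mathcal F}\KL(\hat P_n, F)<\frac{1}{n}\log\frac{e(n+1)^2}{\delta}$ simultaneously for all $n\in\N$ with probability at least $1-\delta$. The symmetric statement with $\E_P[X]\ge m$ follows by applying the same argument to $-X$.

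The main obstacle is the non-convexity of $\{\Var_F[X]\le\sigma^2\}$, which forces the auxiliary parameter $\mu$ and, with it, what is nominally a three-parameter dual. Collapsing this to two effective dimensions, so that the polynomial prefactor is exactly $(n+1)^2$ rather than $(n+1)^3$ or worse, relies on the first-order optimality condition in $\mu$; verifying that this elimination can be carried out without degrading the Laplace/covering estimate is the delicate step of the argument, and the place where one either appeals directly to, or has to adapt, the detailed $K_{\inf}$ tail bounds from the bandit literature.
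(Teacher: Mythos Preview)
Your high-level strategy---Lagrangian dual, non-negative supermartingale, mixture over dual parameters, Ville---is exactly the paper's. The difference is in how to get to a \emph{two}-dimensional dual, and here the paper's route is both simpler and sidesteps a genuine difficulty in yours.

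The paper never introduces the auxiliary parameter $\mu$. Instead it first shifts so that $\E_P[X]=0$ (and hence $m\ge 0$), then \emph{shrinks} the feasible set: it upper-bounds the infimum over $\{\E_F[X]\le m,\ \Var_F[X]\le\sigma^2\}$ by the infimum over the smaller set $\{\E_F[X]\le 0,\ \E_F[X^2]\le\sigma^2\}$. The point is that the last set is cut out by two \emph{linear} functionals of $F$, so its $K_{\inf}$ dual is genuinely two-dimensional, $\sup_{\lambda\in\Lambda\subset\R_+^2}\frac{1}{n}\sum_i\log(1+\lambda_1 X_i+\lambda_2(X_i^2-\sigma^2))$, with $\Lambda$ compact. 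Because $\E_P[X]=0$ and $\E_P[X^2]=\Var_P[X]\le\sigma^2$, each factor has $P$-mean at most $1$ for every $\lambda\in\Lambda$, so the mixture is a supermartingale and Lemma~E.1 of \cite{agrawal2021optimal} with $d=2$ gives exactly the $(n+1)^2$.

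Your route has two problems as written. First, stratifying by $\mu=\E_F[X]$ gives $\inf_{\mu\le m}\sup_{\lambda}$, not $\sup_{(\mu,\lambda)}$; the displayed dual formula is not correct. Second, and more seriously for the supermartingale step, for $\mu\ne\E_P[X]$ the per-sample factor $1-\lambda_1(X-\mu)-\lambda_2((X-\mu)^2-\sigma^2)$ need not have $P$-expectation $\le 1$: we only know $\Var_P[X]\le\sigma^2$, not $\E_P[(X-\mu)^2]\le\sigma^2$. So the mixture over $(\mu,\lambda)$ is not in general a supermartingale, and the ``collapse to two effective dimensions via first-order optimality in $\mu$'' is doing real work that you have not carried out. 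The paper's replacement of the variance constraint by a second-moment constraint is precisely the trick that dissolves both issues at once.
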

See Appendix~\ref{app:finite_variance} for a proof, which follows previous work \citep{agrawal2021optimal} and uses duality of the Kullback-Leibler divergence and a bound on the regret of an online learning algorithm.

From Theorem~\ref{thm:lower_bound_finite_var}, we get that for any $(\sqrt{2 \sigma^2 y(n, \delta)})$-estimator, $y(n, \delta)$ has to be larger than $\frac{1}{2}\left( \exp \left( \frac{1}{n}\log\frac{1}{4\delta} \right) - 1 \right)$. And if $\delta \to 0$, the lower bounds become closer to $\frac{1}{2}\left( \exp \left( \frac{2}{n}\log\frac{1}{4\delta} \right) - 1 \right)$.
The difference between this second lower bound and the error width of $\tilde{\mu}_n$ is the term $2e(n+1)^2$ in the logarithm.
The estimator $\tilde{\mu}_n$ is thus close to optimal in the regime $\delta \ll 1/n$, but for larger $\delta$ its error bound depends on $\log(n)/n$ instead of $\log(1/\delta)/n$.
The $\log(n)$ term comes from the KL concentration lemma~\ref{lem:Kinf_concentration}, which is a concentration bound valid for all sample sizes: a $\log(n)$ factor is the usual price for obtaining a bound valid for all times. We would like to have a concentration result for a given $n$ without that $\log(n)$ term, and we conjecture that such a concentration inequality should hold (in which case the estimator is optimal for all $\delta$), but this remains an open question.

Together, Theorem~\ref{thm:lower_bound_finite_var} and Theorem~\ref{thm:estimator_upper_bound} characterize the optimal error for an estimator with error probability $\delta$ when $\delta \ll 1/n$. We see that the notion of sub-Gaussian estimator is inadequate when $\frac{1}{2}\left( \exp \left( \frac{2}{n}\log\frac{1}{4\delta} \right) - 1 \right)$ is not within a constant of $\frac{1}{n}\log\frac{1}{\delta}$, but is much larger.

\subsection{Lower bound for finite $1+\alpha$ moments}
For $\alpha \in (0,1)$, let $\mathcal D_{1+\alpha} = \{F \in \mathcal P(\mathbb{R}) \mid \mathbb{E}_{F}[|X-\E_F[X]|^{1+\alpha}] < + \infty\}$ be the distributions with finite moment of order $1 + \alpha$. For $F \in \mathcal D_{1+\alpha}$, we write $M_{1+\alpha}(F)=\E_F[|X-\E_F[X]|^{1+\alpha}]^{1/(1+\alpha)}$.
\vspace{-0.75em}

\begin{theorem}\label{thm:lower_bound_finite_1_alpha}
Let $\delta \in (0,1/2)$. For any $\left(\varepsilon_n M_{1+\alpha}(F), \delta\right)$-estimator on $\mathcal D_{1+\alpha}$, with $\alpha \in (0,1)$, $\varepsilon_n$ satisfies
\vspace{-0.75em}
\begin{align*}
\varepsilon_n \ge \frac{1}{2^{\frac{1}{1 + \alpha}}} \left(\frac{1}{n}\log\frac{1}{2\delta}\right)^{\frac{\alpha}{1+\alpha}}
\: .
\end{align*}
If $\log(1/\delta)/n \to 0$, then the bound becomes $\varepsilon_n \ge \frac{1}{1 + o(1)} \left(\frac{1}{n}\log\frac{1}{2\delta}\right)^{\frac{\alpha}{1+\alpha}}$.
\end{theorem}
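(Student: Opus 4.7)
The plan is to mirror the structure of the proof of Theorem~\ref{thm:lower_bound_finite_var}: reduce to a testing lower bound using Theorem~\ref{thm:generic_delta_bound} (or more directly, Lemma~\ref{lem:change_of_measure}) on an explicit two-point perturbation, then translate the resulting bound on the perturbation parameter into a lower bound on $\varepsilon_n$.

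First I would fix parameters $p \in (0,1)$ and $h > 0$ and take the symmetric two-point construction $F = (1-p)\delta_0 + p\delta_{-h}$ and $G = (1-p)\delta_0 + p\delta_h$. A direct computation gives $\E_F[X] = -ph$, $\E_G[X] = ph$, and $M_{1+\alpha}(F)^{1+\alpha} = M_{1+\alpha}(G)^{1+\alpha} = h^{1+\alpha}p(1-p)[p^\alpha + (1-p)^\alpha]$. With $c=0$, the validity condition $F \in \mathcal F_0$ and $G \in \mathcal G_0$ becomes the single scalar constraint $\varepsilon_n \le \phi(p)$, where
\begin{equation*}
\phi(p) = \frac{p^{\alpha/(1+\alpha)}}{\bigl((1-p)[p^\alpha + (1-p)^\alpha]\bigr)^{1/(1+\alpha)}},
\end{equation*}
and one checks that $\phi:(0,1)\to(0,\infty)$ is increasing.

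Next I would apply Lemma~\ref{lem:change_of_measure}, Equation~\eqref{eq:kl_change_measure_3_points}, with the choice $P = \delta_0$, which is the only distribution dominated by both $F$ and $G$. Under $P$ the random variables $\log\frac{dP}{dF}(X_i)$ and $\log\frac{dP}{dG}(X_i)$ are a.s. constant and equal to $\KL(P,F) = \KL(P,G) = -\log(1-p)$, so both deviation probabilities on the right-hand side vanish for any $\beta > 0$. Sending $\beta \to 0$ yields $2\max\{F^{\otimes n}(E), G^{\otimes n}(E^c)\}(1-p)^{-n} \ge 1$. Taking $E = \{\tilde\mu_n \ge 0\}$, the $(\varepsilon_n M_{1+\alpha}, \delta)$-estimator property combined with $c=0$ gives $\max\{F^{\otimes n}(E), G^{\otimes n}(E^c)\} \le \delta$, so $(1-p)^n \le 2\delta$, i.e.\ $p \ge 1 - (2\delta)^{1/n}$.

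Since this holds for every valid $p$ (in particular for the smallest, $p_{\min} = \phi^{-1}(\varepsilon_n)$), monotonicity of $\phi$ yields $\varepsilon_n \ge \phi(1-(2\delta)^{1/n})$. The remaining step, which I expect to be the main technical obstacle, is turning this into the clean form stated. Writing $y = \tfrac{1}{n}\log\tfrac{1}{2\delta}$ and $q = 1 - e^{-y}$, I would use the crude bound $p^\alpha + (1-p)^\alpha \le 2$ together with $1 - q = e^{-y}$ to get
\begin{equation*}
\phi(q) \ge \frac{q^{\alpha/(1+\alpha)}\, e^{y/(1+\alpha)}}{2^{1/(1+\alpha)}} = \frac{\bigl((1-e^{-y})e^{y/\alpha}\bigr)^{\alpha/(1+\alpha)}}{2^{1/(1+\alpha)}}.
\end{equation*}
A short convexity argument (the function $y \mapsto e^{y/\alpha} - e^{y(1-\alpha)/\alpha} - y$ vanishes with vanishing derivative at $0$ and has positive second derivative on $[0,\infty)$ when $\alpha \in (0,1)$) then gives $(1-e^{-y})e^{y/\alpha} \ge y$, which yields the non-asymptotic bound. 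For the asymptotic refinement I would note that $(1-p)[p^\alpha + (1-p)^\alpha] \to 1$ as $p \to 0$ and $q/y \to 1$ as $y \to 0$, so $\phi(q) = y^{\alpha/(1+\alpha)}(1+o(1))$ in the regime $\log(1/\delta)/n \to 0$, absorbing the factor $2^{1/(1+\alpha)}$ into the $(1+o(1))$.
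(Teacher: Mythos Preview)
Your argument is correct and follows the same route as the paper: the same symmetric two-point family $F=(1-p)\delta_0+p\delta_{-h}$, $G=(1-p)\delta_0+p\delta_h$, the same degenerate choice $P=\delta_0$ in the change-of-measure inequality~\eqref{eq:kl_change_measure_3_points} (which kills both deviation probabilities and yields $p\ge 1-(2\delta)^{1/n}$), and the same moment computation with $\eta=(1-p)(p^\alpha+(1-p)^\alpha)\le 2$.

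The only real difference is in the final step, and here your version is the cleaner one. The paper fixes $p_n=\varepsilon_n^{(1+\alpha)/\alpha}2^{1/\alpha}$ and then writes ``using $-\log(1-p_n)\le p_n$'' to pass from $y:=\frac{1}{n}\log\frac{1}{2\delta}\le -\log(1-p_n)$ to $y\le p_n$. That inequality is in the wrong direction (one always has $-\log(1-p)\ge p$ on $(0,1)$), so the step as written does not close. Your formulation avoids this: you keep the exact relation $\varepsilon_n\ge\phi(q)$ with $q=1-e^{-y}$, bound $\phi(q)\ge 2^{-1/(1+\alpha)}\bigl((1-e^{-y})e^{y/\alpha}\bigr)^{\alpha/(1+\alpha)}$ via $\eta\le 2$, and then prove $(1-e^{-y})e^{y/\alpha}\ge y$ by the second-derivative check. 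That convexity argument is correct and is exactly the missing ingredient. (Your monotonicity claim for $\phi$ is also fine: $\phi(p)^{1+\alpha}=1/D(p)$ with $D(p)=(1-p)+(1-p)^{1+\alpha}p^{-\alpha}$, and $D'(p)<0$.) The asymptotic refinement is handled identically in both proofs.
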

\cite[Theorem 3.1]{devroye2016sub} is a very similar lower bound on that problem, and we use the distributions they build for their lower bounds in our proof. The lower bound technique is however different. There is a small mistake in the constants in their proof, and their corrected result is that for $\delta \in (e^{-4n}, 1/2)$, $\varepsilon_n \ge 2^{-\frac{1 - \alpha}{1 + \alpha}} \left(\frac{1}{n}\log\frac{2}{\delta}\right)^{\frac{\alpha}{1+\alpha}}$.
The differences between their result and ours are first that we have a worse power of 2 in front and second, that they have a $\log(2/\delta)$ where we have $\log(1/2\delta)$, so their bound is slightly better. Note however that we don't need $\delta > e^{-4n}$.

\subsection{Non-asymptotic lower bound under log-Lipschitz condition}

\begin{theorem}[Bound on KL between shifted log-Lipschitz distributions]\label{thm:non_asymptotic_lip} 
For any fixed $\delta>0$, let $\widehat{\mu}$ be a $\left(\varepsilon_n, \delta\right)$-estimator for all translations\footnote{We call translation of a distribution $F$ by $h$ the distribution with cumulative distribution function $F(\cdot - h)$.} of a distribution $F$. Let $f$ be the probability distribution of $F$ and suppose that $\log(f)$ is $L$-Lipschitz. Then, we have
\begin{align*}
\varepsilon_n \ge\frac{1}{L}\log\left(1+\sqrt{4\left(1-e^{-\frac{\log(1/4\delta)}{2n}}\right)}\right).
\end{align*}

\end{theorem}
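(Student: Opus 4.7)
The plan is to apply Theorem~\ref{thm:generic_delta_bound} to the one-parameter translation family $\mathcal{D}=\{F(\cdot-h):h\in\R\}$. Since translation by $h$ shifts the mean by exactly $h$, the constraints $\E_G[X]\le c-\varepsilon_n$ and $\E_G[X]\ge c+\varepsilon_n$ in~\eqref{eq:generic_delta_bound} simply select two translations of $F$ whose location parameters differ by at least $2\varepsilon_n$. By translation invariance of the R\'enyi divergence, the inner infimum is attained at the boundary and equals $D_{1/2}(F,F(\cdot-2\varepsilon_n))$ independently of $c$. Theorem~\ref{thm:generic_delta_bound} therefore gives
\begin{equation*}
\tfrac{1}{n}\log\tfrac{1}{4\delta}\;\le\;D_{1/2}\!\bigl(F,\,F(\cdot-2\varepsilon_n)\bigr).
\end{equation*}

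I then convert from R\'enyi to Hellinger distance via the identity $\tfrac{1}{2}D_{1/2}(P,Q)=-\log(1-\H^2(P,Q))$ recalled just after Definition~\ref{def:kl_renyi}. Rearranging the previous display becomes a lower bound on the squared Hellinger distance between $F$ and its shift:
\begin{equation*}
\H^2\!\bigl(F,\,F(\cdot-2\varepsilon_n)\bigr)\;\ge\; 1-\exp\!\Bigl(-\tfrac{1}{2n}\log\tfrac{1}{4\delta}\Bigr).
\end{equation*}

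The core of the argument is to produce a matching upper bound on the same Hellinger distance in terms of $L\varepsilon_n$. Writing $h=2\varepsilon_n$, the log-Lipschitz hypothesis $|\log f(x)-\log f(x-h)|\le Lh$ forces $\sqrt{f(x-h)/f(x)}\in[e^{-Lh/2},e^{Lh/2}]$. The elementary factorization $(\sqrt{a}-\sqrt{b})^2=a(1-\sqrt{b/a})^2$ then yields the pointwise inequality
\begin{equation*}
\bigl(\sqrt{f(x)}-\sqrt{f(x-h)}\bigr)^2\;\le\;f(x)\,(e^{Lh/2}-1)^2,
\end{equation*}
and integrating against $\d x$ (while recalling $\H^2=\tfrac{1}{2}\int(\sqrt{f(x)}-\sqrt{f(x-h)})^2\,\d x$) produces an upper bound of the form $\H^2(F,F(\cdot-2\varepsilon_n))\le C\,(e^{L\varepsilon_n}-1)^2$. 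Chaining this with the lower bound of the previous display gives $(e^{L\varepsilon_n}-1)^2\ge C'\bigl(1-e^{-\log(1/4\delta)/(2n)}\bigr)$, and taking logarithms recovers the announced closed form $\varepsilon_n\ge L^{-1}\log\!\bigl(1+\sqrt{C'(1-e^{-\log(1/4\delta)/(2n)})}\bigr)$.

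The main obstacle is bookkeeping of the constant: the naive pointwise bound above gives only $C=\tfrac12$ and hence $C'=2$, whereas the theorem asserts $C'=4$. Closing this gap requires a tighter control of $\int\sqrt{f(x)f(x-h)}\,\d x$, for example via the sharper identity $(\sqrt{a}-\sqrt{b})^2=\min(a,b)\,(e^{|\log(a/b)|/2}-1)^2$ combined with $\int\min(f(x),f(x-h))\,\d x=1-\mathrm{TV}(F,F(\cdot-h))\le 1-\H^2$, which yields an implicit quadratic inequality in $\H^2$ whose resolution improves the constant to the stated value.
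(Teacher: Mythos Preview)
Your reduction to a two-point $D_{1/2}$ between shifts is sound, but the paper takes a different route. It never touches Hellinger; instead it bounds $\KL$ between $F$ and a shift. First $\KL(F,F(\cdot+h))\le\int (f(x+h)-f(x))^2/f(x)\,\d x$ by the $\chi^2$ inequality; the log-Lipschitz hypothesis gives the pointwise bound $|f(x+h)-f(x)|/f(x)\le e^{Lh}-1$, so the $\chi^2$ integral is at most $(e^{Lh}-1)\int|f(x+h)-f(x)|\,\d x$, a constant times total variation. Pinsker's inequality then folds $\mathrm{TV}$ back into $\sqrt{\KL}$, producing the self-referential bound $\KL(F,F(\cdot+h))\le\tfrac12(e^{Lh}-1)^2$. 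The paper then exploits the \emph{three-point} Chernoff structure rather than your two-point Hellinger: with the unshifted $F$ as pivot and the endpoints $F(\cdot\pm\varepsilon_n)$, one has $D_{1/2}\le 2D_{\mathcal C}\le 2\KL(F,F(\cdot\pm\varepsilon_n))\le(e^{L\varepsilon_n}-1)^2$, whence Theorem~\ref{thm:generic_delta_bound} yields $\tfrac1n\log\tfrac1{4\delta}\le(e^{L\varepsilon_n}-1)^2$.

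Your proposed fix to reach $C'=4$ does not actually close the gap. Carrying out the algebra you sketch---$(\sqrt{a}-\sqrt{b})^2=\min(a,b)(e^{|\log(a/b)|/2}-1)^2$ together with $\int\min(f,f(\cdot-h))=1-\mathrm{TV}\le1-\H^2$---gives $2\H^2\le(e^{L\varepsilon_n}-1)^2(1-\H^2)$ with $h=2\varepsilon_n$. Setting $v=(e^{L\varepsilon_n}-1)^2$ and combining with $\H^2\ge1-e^{-y/2}$ yields $v\ge2(e^{y/2}-1)$, which for small $y$ is $v\ge y+O(y^2)$: the \emph{same} leading constant as your naive bound $v\ge2(1-e^{-y/2})$, still a factor of two short of $v\ge4(1-e^{-y/2})$. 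So the implicit-quadratic trick buys nothing here. If you want to follow the paper, switch to the $\KL/\chi^2/\mathrm{TV}/$Pinsker chain and use the three-point structure with shift $\varepsilon_n$ instead of bounding $D_{1/2}$ directly between endpoints separated by $2\varepsilon_n$. (As an aside: the paper's own final inequality, solved for $\varepsilon_n$, gives $\varepsilon_n\ge\tfrac{1}{L}\log\bigl(1+\sqrt{\tfrac1n\log\tfrac1{4\delta}}\bigr)$, which is not literally the closed form in the statement either; the constant $4$ inside the square root does not appear to follow from either derivation, so the gap you are chasing may simply be a typo in the statement.)
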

Remark that when $\frac{\log(1/4\delta)}{n} \to 0$, we get the simpler lower bound 
$\varepsilon_n \ge\frac{1}{L}\sqrt{\frac{2\log(1/4\delta)}{n}}.$
The proof uses Theorem~\ref{thm:generic_delta_bound}, together with an upper bound on divergences of the form
$\KL(F(x), F(x+h))$. See Appendix~\ref{app:finite_variance}. We now detail two examples of application of this theorem.

\noindent\textbf{Laplace distributions.} If $f$ is a Laplace distribution $f(x)=\frac{1}{\sigma^2}\exp(-|x-\mu|\sqrt{2}/\sigma)$ with variance $\sigma^2$, then $L = \sqrt{2}/\sigma$ and $\varepsilon_n \ge\sigma\sqrt{\frac{\log(1/4\delta)}{n}}$ when $\frac{\log(1/4\delta)}{n} \to 0$. Remark that alternatively, we can use Theorem~\ref{thm:without_2} to get a similar result in an asymptotic regime (up to a $\sqrt{2}$ factor).\\
\textbf{Huber corruption neighborhood of the Gaussian.}
If $\mathcal{D}= \{ F \in \mathcal{P}(\R) | F=(1-\varepsilon)\Phi + \varepsilon H, \, H \in \mathcal{P}(\R) \}$ is Huber's Gaussian corruption neighborhood for some $\varepsilon\in (0,1/2)$. Then, using the Lipshitz constant from Equation (4.51) in \citep{robuststat} for  the corruption $\varepsilon=0.2$, and when $\frac{\log(1/4\delta)}{n} \to 0$, the bound becomes $\varepsilon_n \ge\frac{1}{0.87}\sqrt{\frac{\log(1/4\delta)}{n}}$. The constant is not as good as in the asymptotic bound obtained later (see discussion after Theorem~\ref{thm:local_lowerbound_fisher}), it is smaller by a factor $2.58$.

\section{Lower bound on estimation in (semi-)parametric families}\label{sec:parametric}

In this section, we apply our lower bound technique to the case of the location model $\mathcal{D}=\{F(x-\mu) \mid \mu \in \R\}$ for some smooth distribution $F$ and to smooth bounded or semi-bounded distributions. 

\subsection{Quadratic Mean Differentiability and the Fisher information for parametric lower bounds}

We assume that we are dealing with a parametric family of distributions that are smooth in the sense that the distributions form a quadratic mean differentiable family (abbreviated q.m.d.) defined in Definition~\ref{def:qmd}.
See Section 7.2 in \citep{le2000asymptotics} for background on this concept.

\begin{definition}[Q.m.d family of distributions]\label{def:qmd}
Let $\Theta \subset \R$, and let $\{P_\theta  \mid  \theta \in \Theta\}$ be a family of distributions absolutely continuous with respect to a measure $\mu$.  $\{P_\theta \mid \theta \in \Theta\}$ is said to be quadratic mean differentiable  at $\theta_0\in \Theta$ if there exists $\eta:\R\times \Theta \to \R$ such that for $|h| \to 0$
$$\int_\R \left(\sqrt{p_{\theta_0+h}(x)}-\sqrt{p_{\theta_0}} - h \eta(x,\theta_0)\right)^2\d \mu x = o(|h|^2)\: .$$ 
\end{definition}

In order to prove that a family of distributions is q.m.d., one may use \citep[Theorem 12.2.1]{lehmann2005testing} that we recall in Appendix~\ref{sec:app_qmd}. It can be shown that most exponential families of distributions are q.m.d. Another example is Laplace distributions: although it is not smooth, the Laplace family of distributions parametrized by its mean is q.m.d. (see Section 7.2 of \citep{RePEc:cup:cbooks:9780521784504}). The family of uniform distributions is an example of distributions which are not q.m.d.

Let $\mathcal{F}=\{F_\theta \mid \theta \in \R\}$ be a q.m.d. family of distributions on $\R$ absolutely continuous with respect to a common measure $\mu$. We denote $\I(F_\theta)$ the Fisher information defined by $\I(F_\theta):=4\int\eta(x,\theta)^2\d \mu(x)$ .
If moreover the distributions are continuously differentiable with respect to the parameter, we have $\I(F_\theta) := \E_{F_\theta}\left[(\frac{\partial f_\theta}{\partial\theta}(X))^2/f_\theta(X)^2\right].$
In particular, we will be interested in the parametric location family defined for some distribution $F$ by  $\mathcal{F}=\{F(x-\mu) \mid \mu \in \R\}$, for which we will assume that $f$ is derivable and the Fisher information becomes
$$\I(F) := \E_{F}\left[\frac{(f'(X))^2}{f(X)^2}\right].$$

The following theorem found  in \citep[Theorem 12.2.3]{lehmann2005testing} uses the Fisher information to specify the asymptotic mean and variance of the likelihood ratios in a q.m.d. family.

\begin{theorem}[Convergence of likelihood ratio for a q.m.d. family]\label{thm:clt_qmd}
Let $\Theta \subset \R$, suppose that $\{P_\theta \mid  \theta \in \Theta\}$ is q.m.d. and absolutely continuous with respect to a measure $\mu$ on $\R$. Suppose that $0<\I(P_\theta)<\infty$ and fix $\theta_0\in \Theta$. We have under $P_{\theta_0}^{\otimes n}$, for any $h \in \R$, $h\neq 0$,
$$\sum_{i=1}^n \log\left(\frac{p_{\theta_0}}{p_{\theta_0+h/\sqrt{n}}}\right)\xrightarrow[n \to \infty]{d}\mathcal{N}\left(\frac{h^2}{2}\I(P_{\theta_0}),h^2\I(P_{\theta_0})\right)  $$
\end{theorem}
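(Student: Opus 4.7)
The plan is to follow the classical Le Cam reduction. Under $P_{\theta_0}^{\otimes n}$, introduce the i.i.d.\ variables
\[
W_{n,i} = 2\bigl(\sqrt{p_{\theta_0+h/\sqrt n}(X_i)/p_{\theta_0}(X_i)}-1\bigr),
\]
and exploit the algebraic identity $\log(p_{\theta_0+h/\sqrt n}/p_{\theta_0}) = 2\log(1+W_{n,i}/2)$. A second-order Taylor expansion of $x\mapsto 2\log(1+x/2)$ with cubic remainder, valid on the event where $\max_i|W_{n,i}|$ is small, then gives
\[
-\sum_{i=1}^{n}\log\frac{p_{\theta_0}(X_i)}{p_{\theta_0+h/\sqrt n}(X_i)} \;=\; S_n-\tfrac14\,T_n+R_n,
\]
where $S_n=\sum_i W_{n,i}$, $T_n=\sum_i W_{n,i}^2$ and $|R_n|\le C\sum_i|W_{n,i}|^3$.

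The next step is to read off the first two moments of $W_{n,i}$ from the q.m.d.\ expansion $\sqrt{p_{\theta_0+h/\sqrt n}}=\sqrt{p_{\theta_0}}+(h/\sqrt n)\,\eta(\cdot,\theta_0)+r_n$ with $\int r_n^2\,\d\mu=o(1/n)$, together with the standard identity $\int\eta\sqrt{p_{\theta_0}}\,\d\mu=0$ (which itself follows from q.m.d.\ and $\int p_\theta\,\d\mu=1$). A direct computation gives the Hellinger expansion
\[
\H^2(P_{\theta_0+h/\sqrt n},P_{\theta_0})=\frac{h^2}{8n}\,\I(P_{\theta_0})+o(1/n),
\]
from which one reads off
\[
\E[W_{n,i}]=-2\H^2=-\frac{h^2\,\I(P_{\theta_0})}{4n}+o(1/n),\qquad \E[W_{n,i}^2]=8\H^2=\frac{h^2\,\I(P_{\theta_0})}{n}+o(1/n).
\]

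With these estimates, I would apply the Lindeberg–Feller CLT to the triangular array $\{W_{n,i}\}_{1\le i\le n}$ to obtain $S_n\xrightarrow{d}\mathcal{N}(-h^2\I(P_{\theta_0})/4,\,h^2\I(P_{\theta_0}))$: the Lindeberg condition reduces to showing $n\,\E[W_{n,i}^2\,\1\{|W_{n,i}|>\varepsilon\}]\to 0$, which follows by a truncation of the square-root difference based on $\|r_n\|_{L^2(\mu)}^2=o(1/n)$. A second-moment bound on $T_n$, combined with $\E[T_n]\to h^2\I(P_{\theta_0})$ and $\Var(T_n)\to 0$, yields $T_n\xrightarrow{P} h^2\,\I(P_{\theta_0})$; the same integrability information implies $\max_i|W_{n,i}|\xrightarrow{P} 0$, so $|R_n|\le C\max_i|W_{n,i}|\cdot T_n\xrightarrow{P} 0$.

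Combining via Slutsky's lemma gives $-\sum_i\log(p_{\theta_0}(X_i)/p_{\theta_0+h/\sqrt n}(X_i))\xrightarrow{d}\mathcal{N}(-h^2\,\I(P_{\theta_0})/2,\,h^2\,\I(P_{\theta_0}))$, and negating (both the random variable and the mean) produces the stated conclusion. The main obstacle is the joint control of the quadratic term $T_n$ and the uniform smallness $\max_i|W_{n,i}|\to 0$ using only the $L^2(\mu)$-type q.m.d.\ smoothness, since densities are not assumed pointwise differentiable. This is where the $o(1/n)$ rate on $\|r_n\|_{L^2(\mu)}^2$ and a careful truncation of $W_{n,i}$ become essential, as in the argument of Theorem~12.2.3 of \citep{lehmann2005testing}.
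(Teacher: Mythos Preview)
The paper does not prove this theorem; it simply quotes it from \citep[Theorem~12.2.3]{lehmann2005testing}. Your sketch is exactly the classical Le Cam LAN expansion that underlies that reference---the square-root decomposition, the Hellinger-based moment identities $\E[W_{n,i}]=-2\H^2$ and $\E[W_{n,i}^2]=8\H^2$, Lindeberg--Feller for $S_n$, and the law of large numbers for $T_n$---so your proposal is correct and coincides with the argument the paper is citing.
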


It is straightforward to see that the mean of the left-hand-side of the limit in Theorem~\ref{thm:clt_qmd} is $n \KL(P_{\theta_0},P_{\theta_{0}+h/\sqrt{n}})$. The expression for the mean in the asymptotics follows because the Fisher information is linked to the second derivative of KL (\cite[Theorem 4.4.5]{calin2014geometric} and \cite[Theorem 2.1]{abbasnezhad2006relations}). We now state the main result of this section.

\begin{theorem}\label{thm:without_2}
Let $\Theta \subset \R$ and let $\{P_\theta \mid \theta \in \Theta\}$ be a q.m.d. family which is absolutely continuous with respect to a measure $\mu$ on $\R$. Let $\uI = \inf \{\I(P_\theta) \mid  \theta \in \Theta, \: 0<\I(P_\theta)<\infty\}$.
If there exists a sequence of $(\eta/\sqrt{n}, \delta)$-estimators on $\{P_\theta \mid \theta \in \Theta\}$ for samples sizes $n \to +\infty$, then $\eta$ satisfies
\begin{align*}
\eta \ge \frac{1}{\sqrt{\uI}}\left(\sqrt{1 + 2 \log\frac{1}{4 \delta}} - 1\right)
\: .
\end{align*}
If there exists a family of estimators $(\mathcal E_{n,\delta})_{n \in \mathbb{N}, \delta \in (0,1)}$ and reals $(\eta_\delta)_{\delta \in (0,1)}$ such that each $\mathcal E_{n, \delta}$ is a $(\eta_\delta/\sqrt{n}, \delta)$-estimator, then
$
\liminf_{\delta \to 0} \eta_\delta \left( \sqrt{\frac{2}{\uI}\log\frac{1}{\delta}} \right)^{-1} \ge 1
$ .
\end{theorem}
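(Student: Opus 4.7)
The plan is to apply the three-point change-of-measure inequality \eqref{eq:kl_change_measure_3_points} to a pair of distributions drifting around a reference $P_{\theta_0}$ of near-minimal Fisher information, and use the q.m.d.\ central limit theorem (Theorem~\ref{thm:clt_qmd}) to extract the precise asymptotic constants. Fix $\varepsilon>0$, pick an interior $\theta_0\in\Theta$ with $0<\I(P_{\theta_0})<\uI+\varepsilon$, and, following the convention of this section that $\theta$ coincides with the mean (as in the location model), set
\begin{align*}
F_n := P_{\theta_0 - \eta/\sqrt{n}},\qquad G_n := P_{\theta_0 + \eta/\sqrt{n}},\qquad P := P_{\theta_0},\qquad c := \theta_0.
\end{align*}
For $n$ large enough, the $(\eta/\sqrt{n},\delta)$-estimator hypothesis gives $F_n^{\otimes n}(\tilde{\mu}_n \ge c) \le \delta$ and $G_n^{\otimes n}(\tilde{\mu}_n < c) \le \delta$, so with $E := \{\tilde{\mu}_n \ge c\}$ we have $\max\{F_n^{\otimes n}(E), G_n^{\otimes n}(E^c)\} \le \delta$.

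Apply \eqref{eq:kl_change_measure_3_points} to this $(P,F_n,G_n,E)$ with the choice $\beta_n := \eta\sqrt{\I(P_{\theta_0})}/n$. Two asymptotic inputs drive the bound. First, because the Fisher information is the Hessian of KL (as noted after Theorem~\ref{thm:clt_qmd}), both $n\KL(P,F_n)$ and $n\KL(P,G_n)$ converge to $\tfrac{1}{2}\eta^2\I(P_{\theta_0})$. Second, Theorem~\ref{thm:clt_qmd} applied with $h=\pm\eta$ shows that under $P^{\otimes n}$ the centred log-likelihoods $\sum_i\log(dP/dF_n)(X_i) - n\KL(P,F_n)$ and $\sum_i\log(dP/dG_n)(X_i) - n\KL(P,G_n)$ both converge in distribution to $\mathcal{N}(0, \eta^2\I(P_{\theta_0}))$. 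Since $n\beta_n = \eta\sqrt{\I(P_{\theta_0})}$ is exactly one standard deviation of this limit, both probability terms on the right-hand side of \eqref{eq:kl_change_measure_3_points} tend to $1-\Phi(1)$.

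Passing to the limit in \eqref{eq:kl_change_measure_3_points} then gives
\begin{align*}
2\delta \exp\!\Bigl(\tfrac{1}{2}\eta^2\I(P_{\theta_0}) + \eta\sqrt{\I(P_{\theta_0})}\Bigr) \;\ge\; 2\Phi(1)-1 \;>\; \tfrac{1}{2},
\end{align*}
where the last step uses the numerical fact $\Phi(1)>3/4$. Writing $z := \eta\sqrt{\I(P_{\theta_0})}\ge 0$ and taking logarithms yields $z^2/2 + z \ge \log(1/(4\delta))$, equivalently $(z+1)^2 \ge 1 + 2\log(1/(4\delta))$, i.e.\ $z \ge \sqrt{1 + 2\log(1/(4\delta))} - 1$. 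Sending $\varepsilon\to 0$ replaces $\I(P_{\theta_0})$ by $\uI$ and gives the first inequality. For the asymptotic statement, apply the first part to each $\eta_\delta$: dividing $\eta_\delta\sqrt{\uI}\ge\sqrt{1+2\log(1/(4\delta))}-1$ by $\sqrt{2\log(1/\delta)}$, the right-hand side tends to $1$ as $\delta\to 0$ since $\sqrt{1+2\log(1/(4\delta))}\sim\sqrt{2\log(1/\delta)}$.

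The main obstacle is the bookkeeping in the passage to the limit: the distributions $F_n,G_n$ drift with $n$ through exactly the $\pm\eta/\sqrt{n}$ perturbation Theorem~\ref{thm:clt_qmd} is designed for, and the Fisher information entering both the asymptotic KL and the asymptotic variance is evaluated at the fixed reference $\theta_0$, so no continuity of $\theta\mapsto\I(P_\theta)$ is needed. A secondary subtlety is the numerical coincidence $2\Phi(1)-1>1/2$, which is what cleanly produces the constant $\log(1/(4\delta))$ matching the theorem statement; different choices of $\tau$ in $n\beta_n=\tau\eta\sqrt{\I(P_{\theta_0})}$ would yield other correct-but-less-clean bounds with the same leading order $\sqrt{2\log(1/\delta)/\uI}$.
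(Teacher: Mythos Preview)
Your proof is correct and follows essentially the same route as the paper: apply \eqref{eq:kl_change_measure_3_points} at a reference point $\theta_0$ of near-minimal Fisher information with the $\pm\eta/\sqrt n$ shifts, use Theorem~\ref{thm:clt_qmd} to get the Gaussian limit of the centred log-likelihood sums, pick the threshold at one limiting standard deviation so that $2\Phi(1)-1>1/2$ produces the constant $\log(1/(4\delta))$, and solve the resulting quadratic in $\eta\sqrt{\I(P_{\theta_0})}$. The only cosmetic difference is that the paper keeps a free parameter $x$ in the threshold and sets $x=1$ at the end, while you hard-wire that choice from the start; your explicit $\varepsilon\to 0$ argument to pass from $\I(P_{\theta_0})$ to $\uI$ is also just a more careful rendering of the paper's ``minimize over $\theta$'' step.
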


Theorem~\ref{thm:without_2} is more general than \cite[Theorem 1.3]{gupta2022finite} because they compare smoothed Fisher informations while our result is applicable directly to the target distribution.
The proof is an application of Lemma~\ref{lem:change_of_measure}, equation~\eqref{eq:kl_change_measure_3_points}, in which we control the two probabilities of deviation with Theorem~\ref{thm:clt_qmd} (see Appendix~\ref{app:parametric}).

\subsection{Lower bounds on location estimation based on the Fisher information}
\label{sub:fisher_info}

\subsubsection{Distribution that are invariant by translation}\label{sec:bound_fisher_R}

\begin{corollary}\label{cor:local_lowerbound_fisher_location}
Let $F\in\mathcal D$, with $\mathcal{D}$ invariant by translation (i.e. $F(\cdot +h) \in \mathcal{D}$ for any $h \in \R$), $F$ absolutely continuous with respect to a measure $\mu$. Let $f= \frac{\d F}{\d \mu}$ be the density of $F$, suppose that $f$ is absolutely continuous and that $\I(F)<\infty$. If there exists a sequence of $(\eta/\sqrt{n}, \delta)$-estimators on $\{F(\cdot +h)\mid  h \in \R\}$ for samples sizes $n \to +\infty$, then $\eta$ satisfies
\begin{align*}
\eta \ge \frac{1}{\sqrt{\uI}}\left(\sqrt{1 + 2 \log\frac{1}{4 \delta}} - 1\right)
\: .
\end{align*}
\end{corollary}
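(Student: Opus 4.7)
The plan is to reduce the corollary to a direct application of Theorem~\ref{thm:without_2}, identifying the location family $\{F(\cdot + h) : h \in \mathbb{R}\}$ as a q.m.d.\ family and computing the relevant Fisher information infimum $\underline{\I}$.

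First I would verify that, under the hypotheses (density $f$ absolutely continuous and $\I(F) < \infty$), the location family $\{F(\cdot + h) : h \in \mathbb{R}\}$ is a quadratic mean differentiable family in the sense of Definition~\ref{def:qmd}. This is the standard statement that absolute continuity of $f$ together with finite Fisher information implies q.m.d.\ for the induced location model; it is precisely the content of Theorem 12.2.1 of \citep{lehmann2005testing}, recalled in Appendix~\ref{sec:app_qmd}. Explicitly, the q.m.d.\ representative at a base point $\mu_0$ is
\begin{align*}
\eta(x, \mu_0) \;=\; -\tfrac{1}{2}\,\frac{f'(x - \mu_0)}{\sqrt{f(x - \mu_0)}}\,,
\end{align*}
and the corresponding Fisher information is $\I(F(\cdot + \mu_0)) = 4\int \eta(x,\mu_0)^2\, \d\mu(x) = \int (f')^2/f\, \d\mu$.

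Next I would note that the Fisher information of a location family is translation-invariant: by the change of variable $y = x + h$,
\begin{align*}
\I(F(\cdot + h)) \;=\; \int \frac{(f'(y))^2}{f(y)}\, \d\mu(y) \;=\; \I(F)\,,
\end{align*}
for every $h \in \mathbb{R}$. Consequently the infimum
$$\uI \;=\; \inf\bigl\{\I(F(\cdot + h)) \,:\, h \in \mathbb{R},\; 0 < \I(F(\cdot+h)) < \infty\bigr\}$$
equals $\I(F)$ whenever the latter is strictly positive (if $\I(F) = 0$ then the lower bound claimed is vacuous since the right-hand side would be infinite, while if $\I(F) > 0$ this is the only value appearing in the infimum).

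Finally, with the q.m.d.\ property established and $\uI = \I(F)$ identified, Theorem~\ref{thm:without_2} applied to $\Theta = \mathbb{R}$ and $P_\theta = F(\cdot + \theta)$ delivers exactly the announced bound
$$\eta \;\ge\; \frac{1}{\sqrt{\uI}}\left(\sqrt{1 + 2\log\frac{1}{4\delta}} - 1\right)\,,$$
for any sequence of $(\eta/\sqrt{n}, \delta)$-estimators on the location family. The only non-routine step is the verification of q.m.d., and this is handled by invoking the classical sufficient condition recalled in Appendix~\ref{sec:app_qmd}; the rest is bookkeeping and a change of variables.
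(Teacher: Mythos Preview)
Your approach is correct and essentially identical to the paper's: establish that the location family is q.m.d.\ via the sufficient condition of Appendix~\ref{sec:app_qmd}, observe that translation invariance gives $\uI = \I(F)$, and apply Theorem~\ref{thm:without_2}. The only step the paper makes more explicit is that Theorem~\ref{thm:condition q.m.d} requires absolute continuity of $\sqrt{p_\theta}$ (i.e.\ of $\sqrt{f}$), not of $f$ itself; the paper bridges this by invoking \cite[Lemma A.1]{hajek1972local}, which shows that $f$ absolutely continuous together with $\I(F) < \infty$ forces $\sqrt{f}$ to be absolutely continuous.
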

Corollary~\ref{cor:local_lowerbound_fisher_location} is a lower bound valid for any location estimator, not only for estimation of the mean, as is the bound of Theorem~\ref{thm:local_lowerbound_fisher}. In particular, in the case of robust mean estimation in a corrupted setting, the lower bound will ignore the asymptotic bias that is inevitable when dealing with corrupted distributions, see \cite[Section 4.2]{robuststat}. 

Until now, in this section, everything was in a fully parametric setting, using a smooth parametric family of distribution. However, we can use Corollary~\ref{cor:local_lowerbound_fisher_location} in a semi-parametric way. Let us define $\mathcal{D}_F = \{F(\cdot -h) \mid  h \in \R\}$ and $\mathcal{D}=\cup_F \mathcal{D}_F$.
We can obtain a lower bound for each $\mathcal D_F$ with Corollary~\ref{cor:local_lowerbound_fisher_location} and then combine these lower bounds to obtain that if there exists a sequence of $(\eta/\sqrt{n}, \delta)$-estimators on $\mathcal{D}$ for samples sizes $n \to +\infty$, then $\eta$ satisfies
$$\eta \ge \frac{1}{\inf_{F \in \mathcal{D}}\I(F)}\left(\sqrt{1 + 2 \log\frac{1}{4 \delta}} - 1\right).
$$
From this, we see that it may be interesting to look at minimizer of the Fisher information over nonparametric set of distributions and our lower bound would still apply to those.

\begin{lemma}\label{lem:minmizers_fisher}
Let $\mathcal{D}$ be a set of distribution and denote $F_{\mathcal{D}} = \arg\min_{F \in \mathcal D} \I(F)$.
\begin{itemize}
\item If $\mathcal{D}= \{F \in \mathcal{P}(\R) \mid \int (x^2-1)F(\d x) \le 0\}$ is the set of distributions with second moment bounded by $1$, then $F_{\mathcal{D}}=\Phi$ is the standard normal distribution for which $\I(\Phi) = 1$.
\item If $\mathcal{D}= \left\{F \in \mathcal{P}(\R)  \mid  F\{(-1,1)\} \ge 1-\varepsilon\right\}$ for $\varepsilon\in (0,1)$. Let $\omega\in(0,\pi)$ be such that $\varepsilon = 2\cos^2(\omega/2)/(\omega \tan(\omega/2)+2).$
Then, 
$I_1(\varepsilon):=\I(F_{\mathcal{D}})=\frac{\omega^2}{1+2/(\omega\tan(\omega/2))}.$
\item If $\mathcal{D}= \{ F \in \mathcal{P}(\R)  \mid  F=(1-\varepsilon)\Phi + \varepsilon H, \, H \in \mathcal{P}(\R) \}$ is Huber's Gaussian corruption neighborhood for some $\varepsilon\in (0,1/2)$. Let $k>0$ be related to $\varepsilon$ by $2\varphi(k)/k-2\Phi(-k)=\varepsilon/(1-\varepsilon)$ then 
$ 
I_2(\varepsilon):=\I(F_{\mathcal{D}})=(1-\varepsilon)\left(2\Phi\left(\frac{k}{2}\right)-1\right) +\frac{1-\varepsilon}{\sqrt{2\pi}}\left(\frac{2 \, e^{\left(-\frac{1}{2} \, k^{2}\right)}}{k}- 2 k e^{\left(-\frac{1}{2} \, k^{2}\right)}\right).
$
\end{itemize}
\end{lemma}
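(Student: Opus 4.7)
All three items are variational problems for the Fisher information, which simplifies under the substitution $g = \sqrt{f}$ to minimizing the Dirichlet-type energy $\I(F) = 4\int (g')^2 \d\mu$ subject to $\int g^2 \d\mu = 1$ plus additional constraints on $g$ coming from $\mathcal D$. For item (1), the plan is to invoke the Cramér--Rao inequality for the translation model, which gives $\I(F)\Var(F) \ge 1$ with equality iff $F$ is Gaussian. Since $\int(x^2-1)F(\d x) \le 0$ implies $\Var(F) \le \E_F[X^2] \le 1$, we obtain $\I(F) \ge 1/\Var(F) \ge 1$; the direct computation $\I(\Phi) = 1$ then shows the bound is tight and attained at $\Phi$.

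For item (2), the plan is first to restrict to even unimodal $g$ via symmetric decreasing rearrangement (which does not increase $\int (g')^2$ and preserves both constraints), then to argue by an exchange argument that the inequality $\int_{-1}^1 g^2 \ge 1-\varepsilon$ must be tight at the optimum. Introducing Lagrange multipliers $\lambda, \nu$ for the two integral constraints, the Euler--Lagrange equation becomes $-g'' = \lambda g$ on $(-1,1)$ and $-g'' = \nu g$ outside, whose symmetric, positive, integrable solution is
\[
g(x) = A\cos(\omega x/2) \text{ on } [-1,1], \qquad g(x) = A\cos(\omega/2)\,e^{-\kappa(|x|-1)} \text{ on } |x|>1.
\]
The $C^1$-matching at $|x|=1$ forces $\kappa = (\omega/2)\tan(\omega/2)$, the two normalization conditions then determine $A$ and yield the stated equation linking $\varepsilon$ and $\omega$, and substituting back into $4\int (g')^2$ produces $I_1(\varepsilon)$ after trigonometric simplification.

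For item (3), the plan follows Huber's classical least-favorable-distribution computation. The pointwise constraint $f \ge (1-\varepsilon)\varphi$ enters the KKT conditions as a nonnegative measure multiplier, so on the active set $f = (1-\varepsilon)\varphi$ is Gaussian, while on its complement the unconstrained Euler--Lagrange equation for $\int (f')^2/f$ forces $|f'/f|$ to be constant and hence $f$ to be Laplace. Symmetry and the monotonicity of $\varphi$ away from zero force the active set to be a symmetric interval $[-k,k]$; continuity at $\pm k$ then gives $f(x) = (1-\varepsilon)\varphi(k)\exp(-k(|x|-k))$ on $|x|>k$, and $\int f = 1$ yields the stated equation for $k$. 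Direct integration of $(f')^2/f$ over the two regions produces $I_2(\varepsilon)$.

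The main obstacle is item (2): showing that the symmetric cosine/exponential ansatz is actually the global minimizer (not just a critical point) requires both the symmetrization step and the verification that the interval constraint is active at the optimum. Once this structural claim is settled the remaining algebra is explicit. Items (1) and (3) are, respectively, direct consequences of the Cramér--Rao bound and of Huber's minimax theorem, so the bulk of the work concentrates on item (2).
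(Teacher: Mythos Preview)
Your proposal is correct and essentially reconstructs the classical arguments. Note, however, that the paper does not actually prove this lemma: immediately after stating it, the authors write that ``the proof of Lemma~\ref{lem:minmizers_fisher} can be found in \cite[p.~81]{robuststat}'' and cite Huber's book for all three items (as well as for further corruption neighborhoods). So there is no proof in the paper to compare against; what you have written is precisely the standard derivation from that reference.

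A few remarks on your sketch. For item~(1) the Cram\'er--Rao route is clean and exactly what Huber does. For item~(3) your KKT/Huber argument is the canonical one; the only thing to add is that convexity of $F\mapsto\I(F)$ (via the representation $\I(F)=4\int(\sqrt{f}\,')^2$) guarantees the critical point is the global minimum, so no separate verification is needed. For item~(2), your symmetrization step is justified by the P\'olya--Szeg\H{o} inequality (rearrangement decreases $\int(g')^2$ and, because mass is pushed toward the origin, can only increase $\int_{-1}^1 g^2$). Your worry that the constraint might be inactive is easily dispatched: the unconstrained infimum of $\I$ over $\mathcal P(\mathbb R)$ is $0$ (scale out), so any minimizer with $\I>0$ must saturate the constraint. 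After that, convexity again upgrades the Euler--Lagrange solution to the global minimizer, so the ``main obstacle'' you flag is lighter than you suggest.
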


\begin{figure}[htbp]
\floatconts
{fig:corruption_min_fisher}
{\caption{Minimal Fisher information over two set of corrupted distributions from Lemma~\ref{lem:minmizers_fisher}.}}
{%
\subfigure[plot of $I_1(\varepsilon)$]{%
\label{fig:fisher_I1}
\includegraphics[width=0.45\textwidth]{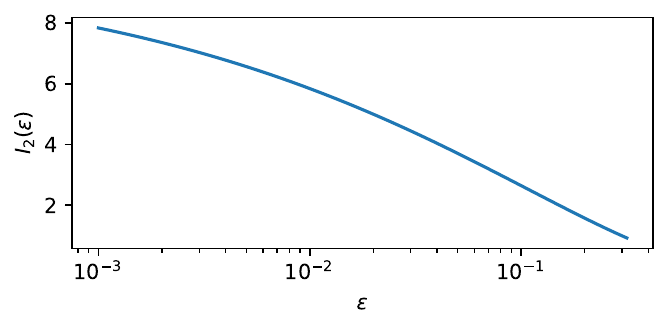}
} 
\subfigure[plot of $I_2(\varepsilon)$]{%
\label{fig:fisher_I2}
\includegraphics[width=0.45\textwidth]{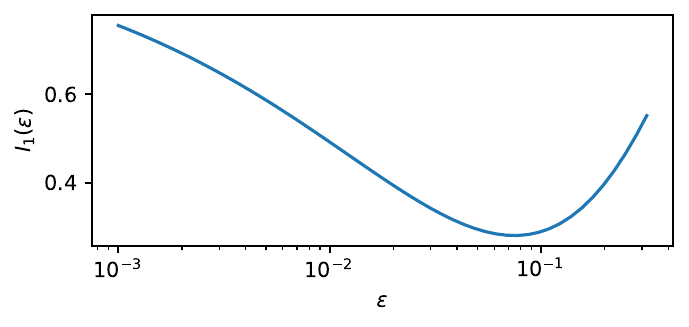}
}
}
\end{figure}

The proof of Lemma~\ref{lem:minmizers_fisher} can be found in \cite[p.81]{robuststat} in which can also be found the minimizer of the Fisher information over a neighborhood in Kolmogorov distance and for more general corruption neighborhoods. The plots of $I_1(\varepsilon)$ and $I_2(\varepsilon)$ can be found in Figure~\ref{fig:corruption_min_fisher}. In Figure~\ref{fig:fisher_I1}, we see that the Fisher information is always smaller than $1$ which means that the resulting estimation problem is harder than the non-corrupted case with finite variance. On the other hand, in Figure~\ref{fig:fisher_I2}, the Fisher information is almost always larger than $1$ which says that the information of how much mass is outside $(-1,1)$ makes the estimation problem easier.

A direct application of Lemma~\ref{lem:minmizers_fisher} together with Theorem~\ref{thm:local_lowerbound_fisher} gives lower bound for the estimation error. For example for $\varepsilon=0.2$, for any $\left(\eta/\sqrt{n}, \delta\right)$-estimator on $ \{ F \in \mathcal{P}(\R) \mid F=0.8\Phi + 0.2 H, \, H \in \mathcal{P}(\R) \}$ for $n \to \infty$, we have $\eta \ge \sqrt{\Var_F[X]\frac{2\log(1/4\delta)}{0.40}},$
and for any $\left(\eta/\sqrt{n}, \delta\right)$-estimator on $ \{ F \in \mathcal{P}(\R)  \mid F\{(-1,1)\} \ge 0.8 \}$ for $n \to \infty$, we have
$
\eta \ge \sqrt{\Var_F[X]\frac{2\log(1/4\delta)}{1.57}}.
$

There already exists several distribution sets for which the minimizer of the Fisher information is known. Such distributions can then be plugged into Theorem~\ref{thm:local_lowerbound_fisher} to provide a lower bound on the associated estimation problem. In particular, \citep{vzivojnovic1998minimum} looked at the case of moment constraints and \citep{bickel1983minimizing} looked at the case of mixture of smooth distributions and \citep{robuststat} looked at various corrupted neighborhood of parametric distributions. 

\subsubsection{Distribution with bounded or semi-bounded support}

In the case of distributions on $\R_+$ or supported on an interval, the minimal Fisher information is known (see \cite{bercher2009minimum}) but we cannot apply the results from Section~\ref{sec:bound_fisher_R} because $\KL\left(F, F(\cdot - h)\right)$ will be infinite as soon as $h > 0$ in the case of support on $\R_+$ and it will always be infinite for support on an interval. Instead, we consider the random variable $Y_h = X + 2 h U $ where $U\sim \mathrm{Unif}([0,1])$. This can be seen as a smoothing.
Remark that the c.d.f. or $Y_h$ is 
$$F_{Y_h}(x) = F\left(X + 2 h U\le x \right) = \frac{1}{h}\int_0^h F(x-2u)\d u $$
whose p.d.f is 
$f_{Y_h}(x) = \frac{1}{h}\int_0^h f(x-2u)\d u.$ Remark that because $f$ is absolutely continuous, $f_{Y_h}$ is derivable with respect to $h$. 
In order to have a well-defined Fisher information, we take the convention that for any $x$ such that $f(x)=0$, $f'(x)/f(x)=0$.



\begin{theorem}\label{thm:local_lowerbound_fisher}
If there exists a sequence of $(\eta/\sqrt{n}, \delta)$-estimators for samples sizes $n \to +\infty$ on the set of distributions $F$ supported in $[a, b]$ with density $f$ that is absolutely continuous on $[a,b]$ for some $a,b\in \R$, then $\eta$ satisfies for any such $F$
\begin{align*}
\eta \ge \frac{1}{\sqrt{\I(F)}}\left(\sqrt{1 + 2 \log\frac{1}{4 \delta}} - 1\right)
\: .
\end{align*}
\end{theorem}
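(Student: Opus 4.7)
Corollary~\ref{cor:local_lowerbound_fisher_location} fails to apply directly to an $F$ supported on $[a,b]$: the family $\{F(\cdot - \mu) : \mu \in \R\}$ contains mutually singular distributions, so $\KL(F, F(\cdot - \mu)) = +\infty$ for every $\mu \neq 0$. The plan is to apply the corollary instead to the smoothed version $Y_h = X + 2hU$ of $F$ and then use monotonicity of the Fisher information under convolution to convert the resulting bound into one involving $\I(F)$.

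\textbf{Reduction to the smoothed family.} Fix $F$ with absolutely continuous density $f$ supported on $[a,b]$ and $\I(F)<\infty$. For $h>0$ let $g_h$ denote the uniform density on $[0,2h]$ and set
\begin{align*}
f_h(y) = (f * g_h)(y) = \frac{1}{2h}\bigl[F(y)-F(y-2h)\bigr],
\end{align*}
the density of $Y_h = X + 2hU$ with $U \sim \mathrm{Unif}([0,1])$. Since $f$ is absolutely continuous and $g_h$ is bounded, $f_h$ is absolutely continuous on $\R$ and its support lies in the bounded interval $[a,b+2h]$; every translate $f_h(\cdot - \mu)$ therefore still has bounded support and an absolutely continuous density, so the hypothesized $(\eta/\sqrt{n},\delta)$-estimators remain valid on the translation-invariant family $\{f_h(\cdot - \mu) : \mu \in \R\}$. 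Granting $\I(f_h)<\infty$ (established next), Corollary~\ref{cor:local_lowerbound_fisher_location} applied to this family yields
\begin{align*}
\eta \ge \frac{1}{\sqrt{\I(f_h)}}\left(\sqrt{1 + 2\log\frac{1}{4\delta}} - 1\right).
\end{align*}

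\textbf{Fisher information monotonicity.} Differentiating $f_h = f * g_h$ and using that $f$ is absolutely continuous gives $f_h' = f' * g_h$ almost everywhere, so the score $s_h(y) = f_h'(y)/f_h(y)$ satisfies the conditioning identity
\begin{align*}
s_h(y) = \int \frac{f'(x)}{f(x)} \cdot \frac{f(x)\, g_h(y-x)}{f_h(y)}\, \d x = \E\bigl[s(X) \mid Y_h = y\bigr],
\end{align*}
where $s(x)=f'(x)/f(x)$ is the score of $F$ (with the convention $s(x)=0$ on $\{f=0\}$) and the last equality uses that the conditional density of $X$ given $Y_h=y$ is proportional to $f(x)g_h(y-x)$. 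Jensen's inequality then yields $\I(f_h) = \E\bigl[s_h(Y_h)^2\bigr] \le \E\bigl[s(X)^2\bigr] = \I(F)$; in particular $\I(f_h)<\infty$. Since $t \mapsto 1/\sqrt{t}$ is decreasing, the displayed lower bound on $\eta$ implies the target inequality $\eta \ge \frac{1}{\sqrt{\I(F)}}\bigl(\sqrt{1 + 2\log\frac{1}{4\delta}} - 1\bigr)$.

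\textbf{Main obstacle.} The delicate step is the conditioning identity for the scores: pulling the derivative through the convolution uses the absolute continuity of $f$, and rewriting the ratio as a conditional expectation requires care on the set $\{f=0\}$, where the score is declared to vanish by convention. Once this identity is established, the rest of the argument reduces to Jensen's inequality and the q.m.d. verification for $f_h$ already employed inside Corollary~\ref{cor:local_lowerbound_fisher_location}, so this single calculation carries essentially all the new content of the proof.
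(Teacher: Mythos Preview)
Your argument is correct, but it is organized differently from the paper's. Both proofs start from the same smoothed variable $Y_h=X+2hU$, yet the parametrization and the Fisher-information step diverge.

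The paper treats the \emph{smoothing width} $h$ as the parameter: it shows that $\{F_{Y_h}\}_h$ is q.m.d.\ in $h$, applies Theorem~\ref{thm:without_2} directly to obtain $\eta \ge (\inf_{a>0}\I(F_{Y_a}))^{-1/2}\bigl(\sqrt{1+2\log\frac{1}{4\delta}}-1\bigr)$ with the Fisher information taken \emph{with respect to $h$}, and then computes explicitly that $\lim_{a\to 0}\I(F_{Y_a})=\int (f')^2/f=\I(F)$.

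You instead fix $h$ and treat \emph{location} as the parameter: you invoke Corollary~\ref{cor:local_lowerbound_fisher_location} for the translation family $\{f_h(\cdot-\mu)\}_\mu$, getting a bound in terms of the location Fisher information $\I(f_h)$, and then close the gap with the convolution inequality $\I(f_h)\le \I(F)$ via the score-conditioning identity and Jensen. This is more modular---it recycles the location corollary as a black box and replaces the paper's limit computation by a one-line monotonicity fact that holds for any smoothing kernel, not just the uniform one. The paper's route is more self-contained (no appeal to the location result) but pays for it with the explicit $h\to 0$ calculation and a q.m.d.\ verification for a non-standard one-parameter family. Both arguments implicitly read the hypothesis as ``compactly supported with absolutely continuous density'' (the ``for some $a,b$'' quantifies over distributions), since the paper's $F_{Y_h}$ is supported on $[a,b+2h]$ just as your translates are.
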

Then, using the minimum Fisher information for bounded and semi-bounded distributions from \cite{bercher2009minimum}, we conclude in Corollary~\ref{cor:lower_bound_semi_bounded} and Corollary~\ref{cor:lower_bound_bounded} on lower bounds on mean estimation for these two families of distributions.

\begin{corollary}\label{cor:lower_bound_semi_bounded}
If there exists an $\left(\eta/\sqrt{n}, \delta\right)$-estimator on $F$, a distribution supported on $[0,\infty]$ with absolutely continuous density on $[0,\infty)$, then $\eta$ satisfies
$
 \eta \ge \sqrt{\frac{\Var_F[X]}{9-24/\pi}}\left(\sqrt{1+2\log\frac{1}{4\delta}}-1\right).
$
\end{corollary}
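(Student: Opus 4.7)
The plan is to combine a semi-bounded analogue of Theorem~\ref{thm:local_lowerbound_fisher} with the explicit minimum-Fisher-information distribution on $[0,\infty)$ with prescribed variance from \cite{bercher2009minimum}. The reasoning splits cleanly into (i) producing a Fisher-information lower bound that is valid for distributions on $[0,\infty)$, and (ii) substituting the extremal Fisher information for that class.

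\textbf{Step 1: Semi-bounded version of Theorem~\ref{thm:local_lowerbound_fisher}.}
Theorem~\ref{thm:local_lowerbound_fisher} is stated for support $[a,b]$, but its proof (by the paragraph preceding the theorem) goes through the smoothing $Y_h = X + 2hU$ with $U\sim \mathrm{Unif}([0,1])$. If $X$ is supported on $[0,\infty)$ then $Y_h$ is still supported on $[0,\infty)$, and because $f$ is assumed absolutely continuous, the smoothed density $f_{Y_h}(x) = \frac{1}{h}\int_0^h f(x-2u)\,\d u$ is differentiable in $h$ and the associated location family is q.m.d.\ on a neighborhood of any fixed $h_0 > 0$. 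Running the same argument therefore yields, for any $F$ on $[0,\infty)$ with absolutely continuous density and finite Fisher information,
\begin{align*}
\eta \ge \frac{1}{\sqrt{\I(F)}}\left(\sqrt{1+2\log\tfrac{1}{4\delta}}-1\right),
\end{align*}
by an appeal to Theorem~\ref{thm:without_2} applied to the smoothed parametric family, letting the smoothing vanish (the link $\I(Y_{h_0}) \to \I(F)$ as $h_0\to 0$ follows from the absolute continuity of $f$ and the variational definition of the Fisher information). Since an $(\eta/\sqrt{n},\delta)$-estimator for the original family induces, by a deterministic shift, an $(\eta/\sqrt{n},\delta)$-estimator for the smoothed family, the reduction is valid.

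\textbf{Step 2: Invoking Bercher's extremizer.}
For any $\sigma^2>0$, \cite{bercher2009minimum} identifies the minimizer $F^\star_{\sigma^2}$ of $\I(F)$ over distributions on $[0,\infty)$ with variance $\sigma^2$, together with the value $\I(F^\star_{\sigma^2}) = (9 - 24/\pi)/\sigma^2$ (the factor $\sigma^{-2}$ comes from the standard scaling $\I(F(\cdot/\sigma)/\sigma) = \I(F)/\sigma^2$). This extremizer has an absolutely continuous density on $[0,\infty)$, so Step~1 applies to it. Since the hypothesis provides an estimator uniform over the whole semi-bounded class, in particular it must work on $F^\star_{\sigma^2}$; combining the bound of Step~1 with $\I(F^\star_{\sigma^2}) = (9-24/\pi)/\Var_{F^\star_{\sigma^2}}[X]$ produces exactly the stated inequality. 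Letting $\sigma^2$ range over all positive reals recovers the corollary as a lower bound holding for a one-parameter family of worst-case distributions inside the class.

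\textbf{Main obstacle.} The only genuinely nontrivial piece is the extension of Theorem~\ref{thm:local_lowerbound_fisher} to $[0,\infty)$ without losing the constant in front: one must justify that passing to the smoothed family and then taking $h_0\to 0$ preserves the limiting Fisher information, and that the q.m.d.\ argument of Theorem~\ref{thm:without_2} still applies in a neighborhood where translation to the left risks leaving the half-line. Once this technical point is settled (it is essentially the content of the paragraph preceding Theorem~\ref{thm:local_lowerbound_fisher}, which is why the authors treat the bounded and semi-bounded cases in a unified way), Step~2 reduces to an algebraic substitution of Bercher's value.
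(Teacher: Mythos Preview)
Your proposal is correct and follows essentially the same route as the paper: apply the Fisher-information lower bound of Theorem~\ref{thm:local_lowerbound_fisher} (in its semi-bounded incarnation) to Bercher--Vignat's minimizer on $[0,\infty)$ with prescribed variance, which has $\I(F^\star)=(9-24/\pi)/\sigma^2$, and read off the bound. The paper's own proof is terser---it simply invokes Theorem~\ref{thm:local_lowerbound_fisher} and \cite[Proposition~3]{bercher2009minimum} without spelling out the half-line extension you flag in your ``Main obstacle''---so if anything you are being more careful than the authors about the smoothing/q.m.d.\ step.
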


\begin{corollary}\label{cor:lower_bound_bounded}
For any $\left(\eta/\sqrt{n}, \delta\right)$-estimator on the set of distribution supported on $[a,b]$, $\eta$ satisfies
$
 \eta \ge \frac{b-a}{\pi}\left(\sqrt{1+2\log\frac{1}{4\delta}}-1\right).
$
\end{corollary}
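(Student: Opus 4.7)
The plan is to derive Corollary~\ref{cor:lower_bound_bounded} as an immediate consequence of Theorem~\ref{thm:local_lowerbound_fisher} by instantiating the bound at the distribution $F$ that minimizes the Fisher information among absolutely continuous densities supported on $[a,b]$. The key observation is that, since the estimator is assumed to work for every distribution in the class, the lower bound of Theorem~\ref{thm:local_lowerbound_fisher} applies for every admissible $F$; thus to get the strongest statement we are free to select the single most favorable $F$, namely one with the smallest possible $\I(F)$.

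First I would recall the classical minimum Fisher information result for bounded-support densities stated in \cite{bercher2009minimum}: over the class of absolutely continuous densities $f$ supported on $[a,b]$,
\begin{align*}
\inf_{f} \I(f) \;=\; \frac{\pi^2}{(b-a)^2},
\end{align*}
with the infimum attained by the smooth density $f^\star(x) = \frac{2}{b-a}\sin^2\!\bigl(\pi \tfrac{x-a}{b-a}\bigr)$. This $f^\star$ is absolutely continuous on $[a,b]$ (it is even $C^\infty$ and vanishes at the endpoints), so it satisfies the regularity hypothesis of Theorem~\ref{thm:local_lowerbound_fisher}.

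Second, I would apply Theorem~\ref{thm:local_lowerbound_fisher} to the distribution $F^\star$ with density $f^\star$. Since any $(\eta/\sqrt{n},\delta)$-estimator valid on the whole class of bounded distributions is in particular a $(\eta/\sqrt{n},\delta)$-estimator on $\{F^\star\}$ (and more importantly on the translation family required by the proof inside Theorem~\ref{thm:local_lowerbound_fisher} — here, $F^\star$ and nearby perturbations can all be taken inside $[a,b]$ up to an arbitrarily small shift of endpoints), the theorem yields
\begin{align*}
\eta \;\ge\; \frac{1}{\sqrt{\I(F^\star)}}\left(\sqrt{1 + 2\log\tfrac{1}{4\delta}} - 1\right)
\;=\; \frac{b-a}{\pi}\left(\sqrt{1 + 2\log\tfrac{1}{4\delta}} - 1\right),
\end{align*}
which is exactly the inequality claimed by the corollary.

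The main obstacle I anticipate is a purely technical matching between the hypotheses of Theorem~\ref{thm:local_lowerbound_fisher} and the bounded-support setting: Theorem~\ref{thm:local_lowerbound_fisher} is proved via the smoothing construction $Y_h = X + 2hU$ which enlarges the support by $2h$, so strictly speaking the q.m.d.\ family used in the proof may leave $[a,b]$. This is handled by applying the theorem inside an enlarged interval $[a - \varepsilon, b + \varepsilon]$, noting that the minimum Fisher information is continuous in the interval length, and letting $\varepsilon \to 0$; the bound $(b-a)/\pi$ is recovered in the limit. Apart from this boundary subtlety, the proof is a one-line substitution of the known extremal value of the Fisher information into Theorem~\ref{thm:local_lowerbound_fisher}.
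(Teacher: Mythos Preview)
Your approach is exactly the paper's: instantiate Theorem~\ref{thm:local_lowerbound_fisher} at the Fisher-information minimizer on the interval, quote the minimal value from \cite{bercher2009minimum}, and substitute. The paper additionally rescales to $[0,1]$ before citing $\I=\pi^2$, but that is cosmetic.

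There is one factual slip in your write-up. The density you exhibit, $f^\star(x)=\tfrac{2}{b-a}\sin^2\!\bigl(\pi(x-a)/(b-a)\bigr)$, does \emph{not} have Fisher information $\pi^2/(b-a)^2$: a direct computation gives $(f^{\star\prime})^2/f^\star=\tfrac{8\pi^2}{(b-a)^3}\cos^2\!\bigl(\pi(x-a)/(b-a)\bigr)$, which integrates to $4\pi^2/(b-a)^2$. The minimizer the paper cites on $[0,1]$ is (the normalization of) $\cos^2(\pi x/2)$, which vanishes at only one endpoint and does yield $\I=\pi^2$; so your quoted minimum value matches the paper, but your candidate density does not realize it. Your remark about the smoothing $Y_h=X+2hU$ enlarging the support is a legitimate concern that the paper's proof does not discuss.
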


\section{Conclusion and open problems}
\label{sec:conclusion_and_open_problem}

We have provided lower bounds for mean and location estimation and have shown several applications of the bounds, to nonparametric settings like the distributions with bounded variance as well as to parametric cases with lower bounds depending on the Fisher information.

About the generic lower bounds, we observed in Theorem~\ref{thm:non_asymptotic_testing} that the factor 2 in the $2 D_{\mathcal C}$ upper bound is necessary for Bernoulli distributions and $n=1$ sample. However, that factor 2 disappears when $n \to + \infty$, a fact that we prove with a different proof and that requires more information about the set of distributions we consider.
An interesting goal would be to have a single bound with a factor transitioning from 2 to 1 as $n$ grows (possibly under assumptions on the sets of distributions).

For the finite variance case, the main remaining questions are first whether the estimator we introduced is optimal also for $\delta > 1/n$, which could require a better concentration on the infimum of Kullback-Leibler divergence, and second whether we can compute efficiently that estimator (or modify it to make it efficiently computable).

In the case of the distributions with finite moment of order $1 + \alpha$, our lower bound could presumably be improved by computing exactly the Chernoff divergence between the sets of distributions with high and low mean and the optimal distributions as was done in the variance case. The current bound is based on a guess for good distributions taken from \citep{devroye2016sub}.

There have been a lot of interest in robust multivariate mean estimation, in particular lower bounds can be found in \cite[Theorem 2.2]{10.1214/17-AOS1607}, the authors show that the optimal rate is of order $\Omega(\sqrt{\mathrm{Tr}(\Sigma)/n}+\sqrt{\|\Sigma\|\log(1/\delta)/n})$ where $\Sigma$ is the covariance matrix. Several estimators achieve this optimal rate~\citep{depersin2019robust,10.1214/21-AOS2145} but the optimal constants in factor of the complexities term are still unknown. 

\acks{The authors acknowledge the funding of the French National Research Agency under the project FATE (ANR-22-CE23-0016-01).
This work benefited from the support of the French Ministry of Higher Education and Research, of Inria and of the Hauts-de-France region.
The authors are members of the Inria Scool team.}

\bibliography{subgaussian_bib}

\begin{thebibliography}{46}
\providecommand{\natexlab}[1]{#1}
\providecommand{\url}[1]{\texttt{#1}}
\expandafter\ifx\csname urlstyle\endcsname\relax
  \providecommand{\doi}[1]{doi: #1}\else
  \providecommand{\doi}{doi: \begingroup \urlstyle{rm}\Url}\fi

\bibitem[Abbasnezhad and Arghami(2006)]{abbasnezhad2006relations}
M~Abbasnezhad and NR~Arghami.
\newblock Relations between renyi distance and fisher information.
\newblock 2006.

\bibitem[Agrawal et~al.(2021)Agrawal, Koolen, and Juneja]{agrawal2021optimal}
Shubhada Agrawal, Wouter~M Koolen, and Sandeep Juneja.
\newblock Optimal best-arm identification methods for tail-risk measures.
\newblock \emph{Advances in Neural Information Processing Systems},
  34:\penalty0 25578--25590, 2021.

\bibitem[Bercher and Vignat(2009)]{bercher2009minimum}
J-F Bercher and Christophe Vignat.
\newblock On minimum fisher information distributions with restricted support
  and fixed variance.
\newblock \emph{Information Sciences}, 179\penalty0 (22):\penalty0 3832--3842,
  2009.

\bibitem[Bickel and Collins(1983)]{bickel1983minimizing}
Peter~J Bickel and John~Richard Collins.
\newblock Minimizing fisher information over mixtures of distributions.
\newblock \emph{Sankhy{\=a}: The Indian Journal of Statistics, Series A}, pages
  1--19, 1983.

\bibitem[Burnetas and Katehakis(1996)]{burnetas1996optimal}
Apostolos~N Burnetas and Michael~N Katehakis.
\newblock Optimal adaptive policies for sequential allocation problems.
\newblock \emph{Advances in Applied Mathematics}, 17\penalty0 (2):\penalty0
  122--142, 1996.

\bibitem[Calin and Udri{\c{s}}te(2014)]{calin2014geometric}
Ovidiu Calin and Constantin Udri{\c{s}}te.
\newblock \emph{Geometric modeling in probability and statistics}, volume 121.
\newblock Springer, 2014.

\bibitem[Catoni(2012)]{catoni2012challenging}
Olivier Catoni.
\newblock Challenging the empirical mean and empirical variance: a deviation
  study.
\newblock In \emph{Annales de l'IHP Probabilit{\'e}s et statistiques},
  volume~48, pages 1148--1185, 2012.

\bibitem[Chen et~al.(2018)Chen, Gao, and Ren]{10.1214/17-AOS1607}
Mengjie Chen, Chao Gao, and Zhao Ren.
\newblock {Robust covariance and scatter matrix estimation under Huber’s
  contamination model}.
\newblock \emph{The Annals of Statistics}, 46\penalty0 (5):\penalty0 1932 --
  1960, 2018.
\newblock \doi{10.1214/17-AOS1607}.
\newblock URL \url{https://doi.org/10.1214/17-AOS1607}.

\bibitem[Cherapanamjeri et~al.(2019)Cherapanamjeri, Flammarion, and
  Bartlett]{cherapanamjeri2019fast}
Yeshwanth Cherapanamjeri, Nicolas Flammarion, and Peter~L Bartlett.
\newblock Fast mean estimation with sub-gaussian rates.
\newblock In \emph{Conference on Learning Theory}, pages 786--806. PMLR, 2019.

\bibitem[Cherapanamjeri et~al.(2022)Cherapanamjeri, Tripuraneni, Bartlett, and
  Jordan]{cherapanamjeri2022optimal}
Yeshwanth Cherapanamjeri, Nilesh Tripuraneni, Peter Bartlett, and Michael
  Jordan.
\newblock Optimal mean estimation without a variance.
\newblock In \emph{Conference on Learning Theory}, pages 356--357. PMLR, 2022.

\bibitem[Cram{\'e}r(1946)]{cramer1946mathematical}
Harald Cram{\'e}r.
\newblock Mathematical methods of statistics (pms-9), 1946.

\bibitem[Dalalyan and Minasyan(2022)]{10.1214/21-AOS2145}
Arnak~S. Dalalyan and Arshak Minasyan.
\newblock {All-in-one robust estimator of the Gaussian mean}.
\newblock \emph{The Annals of Statistics}, 50\penalty0 (2):\penalty0 1193 --
  1219, 2022.
\newblock \doi{10.1214/21-AOS2145}.
\newblock URL \url{https://doi.org/10.1214/21-AOS2145}.

\bibitem[Depersin and Lecu{\'e}(2019)]{depersin2019robust}
Jules Depersin and Guillaume Lecu{\'e}.
\newblock Robust subgaussian estimation of a mean vector in nearly linear time,
  2019.

\bibitem[Devroye et~al.(2016)Devroye, Lerasle, Lugosi, and
  Oliveira]{devroye2016sub}
Luc Devroye, Matthieu Lerasle, Gabor Lugosi, and Roberto~I Oliveira.
\newblock Sub-gaussian mean estimators.
\newblock 2016.

\bibitem[Garivier and Kaufmann(2016)]{garivier2016optimal}
Aur{\'e}lien Garivier and Emilie Kaufmann.
\newblock Optimal best arm identification with fixed confidence.
\newblock In \emph{Conference on Learning Theory}, pages 998--1027. PMLR, 2016.

\bibitem[Garivier et~al.(2018)Garivier, Hadiji, Menard, and
  Stoltz]{garivier2018kl}
Aur{\'e}lien Garivier, H{\'e}di Hadiji, Pierre Menard, and Gilles Stoltz.
\newblock Kl-ucb-switch: optimal regret bounds for stochastic bandits from both
  a distribution-dependent and a distribution-free viewpoints.
\newblock \emph{arXiv preprint arXiv:1805.05071}, 2018.

\bibitem[Garivier et~al.(2019)Garivier, M{\'e}nard, and
  Stoltz]{garivier2019explore}
Aur{\'e}lien Garivier, Pierre M{\'e}nard, and Gilles Stoltz.
\newblock Explore first, exploit next: The true shape of regret in bandit
  problems.
\newblock \emph{Mathematics of Operations Research}, 44\penalty0 (2):\penalty0
  377--399, 2019.

\bibitem[Gibbs and Su(2002)]{gibbs2002choosing}
Alison~L Gibbs and Francis~Edward Su.
\newblock On choosing and bounding probability metrics.
\newblock \emph{International statistical review}, 70\penalty0 (3):\penalty0
  419--435, 2002.

\bibitem[Gobet et~al.(2023)Gobet, Lerasle, and M{\'e}tivier]{gobet2023robust}
Emmanuel Gobet, Matthieu Lerasle, and David M{\'e}tivier.
\newblock The robust randomized quasi monte carlo method, applications to
  integrating singular functions.
\newblock 2023.

\bibitem[Gupta et~al.(2022)Gupta, Lee, Price, and Valiant]{gupta2022finite}
Shivam Gupta, Jasper Lee, Eric Price, and Paul Valiant.
\newblock Finite-sample maximum likelihood estimation of location.
\newblock \emph{Advances in Neural Information Processing Systems},
  35:\penalty0 30139--30149, 2022.

\bibitem[Gupta et~al.(2023)Gupta, Lee, and Price]{gupta2023finite}
Shivam Gupta, Jasper~CH Lee, and Eric Price.
\newblock Finite-sample symmetric mean estimation with fisher information rate.
\newblock In \emph{The Thirty Sixth Annual Conference on Learning Theory},
  pages 4777--4830. PMLR, 2023.

\bibitem[H{\'a}jek(1972)]{hajek1972local}
Jaroslav H{\'a}jek.
\newblock Local asymptotic minimax and admissibility in estimation.
\newblock In \emph{Proceedings of the sixth Berkeley symposium on mathematical
  statistics and probability}, volume~1, pages 175--194, 1972.

\bibitem[Hampel(1971)]{10.1214/aoms/1177693054}
Frank~R. Hampel.
\newblock {A General Qualitative Definition of Robustness}.
\newblock \emph{The Annals of Mathematical Statistics}, 42\penalty0
  (6):\penalty0 1887 -- 1896, 1971.
\newblock \doi{10.1214/aoms/1177693054}.
\newblock URL \url{https://doi.org/10.1214/aoms/1177693054}.

\bibitem[Hoeffding(1953)]{hoeffding1953lower}
Wassily Hoeffding.
\newblock A lower bound for the average sample number of a sequential test.
\newblock \emph{The Annals of Mathematical Statistics}, pages 127--130, 1953.

\bibitem[Hoeffding(1960)]{hoeffding1960lower}
Wassily Hoeffding.
\newblock \emph{Lower bounds for the expected sample size and the average risk
  of a sequential procedure}.
\newblock Air Research and Development Command, Office of Scientific Research,
  US Air~…, 1960.

\bibitem[Honda and Takemura(2010)]{honda2010asymptotically}
Junya Honda and Akimichi Takemura.
\newblock An asymptotically optimal bandit algorithm for bounded support
  models.
\newblock In \emph{COLT}, pages 67--79. Citeseer, 2010.

\bibitem[Honda and Takemura(2011)]{honda2011asymptotically}
Junya Honda and Akimichi Takemura.
\newblock An asymptotically optimal policy for finite support models in the
  multiarmed bandit problem.
\newblock \emph{Machine Learning}, 85\penalty0 (3):\penalty0 361--391, 2011.

\bibitem[Huber(1964)]{huber1964}
Peter~J. Huber.
\newblock Robust estimation of a location parameter.
\newblock \emph{Ann. Math. Statist.}, 35\penalty0 (1):\penalty0 73--101, 03
  1964.
\newblock ISSN 0003-4851.
\newblock \doi{10.1214/aoms/1177703732}.
\newblock URL \url{https://doi.org/10.1214/aoms/1177703732}.

\bibitem[Huber(1967)]{huber1967behavior}
Peter~J Huber.
\newblock The behavior of maximum likelihood estimates under nonstandard
  conditions.
\newblock In \emph{Proceedings of the fifth Berkeley symposium on mathematical
  statistics and probability}, volume~1, pages 221--233. Berkeley, CA, 1967.

\bibitem[Huber and Ronchetti(2009)]{robuststat}
Peter~J Huber and Elvezio~M Ronchetti.
\newblock \emph{Robust statistics; 2nd ed.}
\newblock Wiley Series in Probability and Statistics. Wiley, Hoboken, NJ, 2009.
\newblock URL \url{https://cds.cern.ch/record/1254106}.

\bibitem[Lai and Robbins(1985)]{lai1985asymptotically}
Tze~Leung Lai and Herbert Robbins.
\newblock Asymptotically efficient adaptive allocation rules.
\newblock \emph{Advances in applied mathematics}, 6\penalty0 (1):\penalty0
  4--22, 1985.

\bibitem[Le~Cam and Yang(2000)]{le2000asymptotics}
Lucien~Marie Le~Cam and Grace~Lo Yang.
\newblock \emph{Asymptotics in statistics: some basic concepts}.
\newblock Springer Science \& Business Media, 2000.

\bibitem[Lee and Valiant(2022)]{lee2022optimal}
Jasper~CH Lee and Paul Valiant.
\newblock Optimal sub-gaussian mean estimation in $\mathbb{R}$.
\newblock In \emph{2021 IEEE 62nd Annual Symposium on Foundations of Computer
  Science (FOCS)}, pages 672--683. IEEE, 2022.

\bibitem[Lehmann et~al.(2005)Lehmann, Romano, and Casella]{lehmann2005testing}
Erich~Leo Lehmann, Joseph~P Romano, and George Casella.
\newblock \emph{Testing statistical hypotheses}, volume~3.
\newblock Springer, 2005.

\bibitem[Lorden(1983)]{lorden1983asymptotic}
Gary Lorden.
\newblock Asymptotic efficiency of three-stage hypothesis tests.
\newblock \emph{The Annals of Statistics}, 11\penalty0 (1):\penalty0 129--140,
  1983.

\bibitem[Lugosi and Mendelson(2019)]{lugosi2019mean}
G{\'a}bor Lugosi and Shahar Mendelson.
\newblock Mean estimation and regression under heavy-tailed distributions: A
  survey.
\newblock \emph{Foundations of Computational Mathematics}, 19\penalty0
  (5):\penalty0 1145--1190, 2019.

\bibitem[Meyer(2021)]{Meyer_2021_CVPR}
Gregory~P. Meyer.
\newblock An alternative probabilistic interpretation of the huber loss.
\newblock In \emph{Proceedings of the IEEE/CVF Conference on Computer Vision
  and Pattern Recognition (CVPR)}, pages 5261--5269, June 2021.

\bibitem[Minsker(2023)]{minsker2023efficient}
Stanislav Minsker.
\newblock Efficient median of means estimator.
\newblock \emph{arXiv preprint arXiv:2305.18681}, 2023.

\bibitem[Minsker and Ndaoud(2021)]{minsker2021robust}
Stanislav Minsker and Mohamed Ndaoud.
\newblock Robust and efficient mean estimation: an approach based on the
  properties of self-normalized sums.
\newblock \emph{Electronic Journal of Statistics}, 15\penalty0 (2):\penalty0
  6036--6070, 2021.

\bibitem[Nishiyama(2020)]{nishiyama2020tight}
Tomohiro Nishiyama.
\newblock A tight lower bound for the hellinger distance with given means and
  variances.
\newblock \emph{arXiv preprint arXiv:2010.13548}, 2020.

\bibitem[Radhakrishna~Rao(1945)]{radhakrishna1945information}
C~Radhakrishna~Rao.
\newblock Information and accuracy attainable in the estimation of statistical
  parameters.
\newblock \emph{Bulletin of the Calcutta Mathematical Society}, 37\penalty0
  (3):\penalty0 81--91, 1945.

\bibitem[Tsybakov(2009)]{tsybakov2009introduction}
Alexandre~B. Tsybakov.
\newblock \emph{Introduction to Nonparametric Estimation}.
\newblock Springer series in statistics. Springer, 2009.
\newblock ISBN 978-0-387-79051-0.
\newblock \doi{10.1007/B13794}.
\newblock URL \url{https://doi.org/10.1007/b13794}.

\bibitem[Vaart(2000)]{RePEc:cup:cbooks:9780521784504}
A.~W. van~der Vaart.
\newblock \emph{Asymptotic Statistics}.
\newblock Cambridge University Press, 2000.
\newblock URL
  \url{https://EconPapers.repec.org/RePEc:cup:cbooks:9780521784504}.

\bibitem[Van~Erven and Harremos(2014)]{van2014renyi}
Tim Van~Erven and Peter Harremos.
\newblock R{\'e}nyi divergence and kullback-leibler divergence.
\newblock \emph{IEEE Transactions on Information Theory}, 60\penalty0
  (7):\penalty0 3797--3820, 2014.

\bibitem[Wald(1945)]{wald1945sequential}
A~Wald.
\newblock Sequential tests of statistical hypotheses.
\newblock \emph{The Annals of Mathematical Statistics}, 16\penalty0
  (2):\penalty0 117--186, 1945.

\bibitem[{\v{Z}}ivojnovi{\'c}(1998)]{vzivojnovic1998minimum}
Vojin {\v{Z}}ivojnovi{\'c}.
\newblock Minimum fisher information of moment-constrained distributions with
  application to robust blind identification.
\newblock \emph{Signal processing}, 65\penalty0 (2):\penalty0 297--313, 1998.

\end{thebibliography}
\newpage

\appendix


\section{Proofs: testing and estimations lower bounds}
\label{app:testing_lb}

\begin{proof}[of Lemma~\ref{lem:data_processing}]
The first inequality is a simple consequence of the data processing inequality for $\KL$, and can be found for example in \citep[Lemma 5 and Equation 11]{garivier2019explore}.

Let's now prove the Rényi divergence inequality. First, use the equality $(1 - \alpha)D_\alpha(F, G) = \inf_{P\in \mathcal P(\Omega)}\left(\alpha \KL(P, F) + (1 - \alpha) \KL(P, G)\right)$ \citep[Theorem 27]{van2014renyi}. Let then $P \in \mathcal P(\Omega)$.
Let $\delta = \max\{F(E), G(E^c)\}$. By Equation~\eqref{eq:kl_data_processing} we have $\KL(P, F) \ge P(E) \log\frac{1}{F(E)} - \log 2$ and $\KL(P, G) \ge P(E^c) \log\frac{1}{G(E^c)} - \log 2$. Let $\alpha \in (0, 1)$. Summing the two inequalities with weights $\alpha$ and $(1 - \alpha)$ we get
\begin{align*}
\alpha \KL(P, F) + (1 - \alpha) \KL(P, G) \ge (\alpha P(E) + (1 - \alpha) P(E^c)) \log\frac{1}{\delta} - \log 2
\: .
\end{align*}
Since $P$ is arbitrary we can compare the infimum over $P$ of both sides.
\begin{align*}
\inf_{P\in \mathcal P(\Omega)}\left(\alpha \KL(P, F) + (1 - \alpha) \KL(P, G)\right)
&\ge \inf_{P \in \mathcal P(\Omega)}(\alpha P(E) + (1 - \alpha) P(E^c)) \log\frac{1}{\delta} - \log 2
\\
&= \min\{\alpha, 1 - \alpha\} \log \frac{1}{\delta} - \log 2
\: .
\end{align*}
\end{proof}

\begin{proof}[of Lemma~\ref{lem:change_of_measure}]
First inequality:
\begin{align*}
Q(E)
&= \E_P\left[\mathbb{I}(E) e^{- \log \frac{dP}{dQ}}\right]
\ge \E_P\left[\mathbb{I}\left(E \cap \left\{\log \frac{dP}{dQ} \le \KL(P, Q) + \beta\right\}\right) e^{- \log \frac{dP}{dQ}}\right]
\\
&\ge e^{- \KL(P, Q) - \beta} P\left(E \cap \left\{\log \frac{dP}{dQ} \le \KL(P, Q) + \beta\right\}\right)
\\
&\ge e^{- \KL(P, Q) - \beta} \left( P(E) - P\left(\log \frac{dP}{dQ} - \KL(P, Q) > \beta\right)\right)
\: .
\end{align*}
For the second inequality, we apply the first inequality once with $P^{\otimes n}$ and $F^{\otimes n}$ and event $E$, and once for $P^{\otimes n}$ and $G^{\otimes n}$ and event $E^c$. We then sum the two inequalities and use $P(E) + P(E^c) = 1$. We get
\begin{align*}
&F(E) e^{n \KL(P, F) + n\beta} + G(E^c) e^{n\KL(P, G) + n\beta}
\\
&\ge 1 - P^{\otimes n}\left(\frac{1}{n}\sum_{i=1}^n\log\frac{d P}{d F}(X_i) - \KL(P, F) > \beta\right)
  - P^{\otimes n}\left(\frac{1}{n}\sum_{i=1}^n\log\frac{d P}{d G}(X_i) - \KL(P, G) > \beta\right)
\: .
\end{align*}
We finally use that the sum of two terms on the left-hand side is less than twice the maximum.
\end{proof}

For a function $\phi: \mathcal X \to \mathcal Y$ between two measurable spaces and a measure $F$ on $\mathcal X$, we denote the pushforward of $F$ by $\phi$ by $\phi_*F$. This is a measure on $\mathcal Y$.
For a set of distributions $\mathcal F$ on $\mathcal X$, we write $\phi_* \mathcal F = \{\phi_* F \mid F \in \mathcal F\}$.

\begin{lemma}\label{lem:DC_measurable_equiv}
If $\phi : \mathcal X \to \mathcal Y$ is an equivalence between measurable spaces and $F, G$ are two distributions on $\mathcal X$, then $D_{\mathcal C}(\phi_* F, \phi_* G) = D_{\mathcal C}(F, G)$. If $\mathcal F$ and $\mathcal G$ are two sets of distributions on $\mathcal X$, we also have $D_{\mathcal C}(\phi_* \mathcal F, \phi_* \mathcal G) = D_{\mathcal C}(\mathcal F, \mathcal G)$.
\end{lemma}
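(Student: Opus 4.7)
The plan is to reduce the claim to a pointwise invariance of $\KL$ under measurable equivalences. Recall that an equivalence $\phi : \mathcal X \to \mathcal Y$ between measurable spaces is a measurable bijection whose inverse $\phi^{-1}$ is also measurable. The data processing inequality for $\KL$, applied to the measurable map $\phi$, yields $\KL(\phi_* P, \phi_* Q) \le \KL(P, Q)$ for any $P, Q \in \mathcal P(\mathcal X)$. Applying the same inequality to the measurable map $\phi^{-1}$ and the measures $\phi_* P, \phi_* Q$ on $\mathcal Y$, and using that $\phi^{-1}_* \phi_* = \mathrm{id}$ on measures, I obtain $\KL(P, Q) \le \KL(\phi_* P, \phi_* Q)$. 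Combining both inequalities gives the pointwise invariance $\KL(\phi_* P, \phi_* Q) = \KL(P, Q)$.

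Next, I would observe that the same bijectivity of $\phi$ implies that the pushforward map $P \mapsto \phi_* P$ is a bijection from $\mathcal P(\mathcal X)$ onto $\mathcal P(\mathcal Y)$, with inverse $Q \mapsto \phi^{-1}_* Q$. Substituting into the definition of the Chernoff divergence, and performing the change of variable $Q = \phi_* P$ in the infimum, I get
\begin{align*}
D_{\mathcal C}(\phi_* F, \phi_* G)
&= \inf_{Q \in \mathcal P(\mathcal Y)} \max\{\KL(Q, \phi_* F), \KL(Q, \phi_* G)\} \\
&= \inf_{P \in \mathcal P(\mathcal X)} \max\{\KL(\phi_* P, \phi_* F), \KL(\phi_* P, \phi_* G)\} \\
&= \inf_{P \in \mathcal P(\mathcal X)} \max\{\KL(P, F), \KL(P, G)\}
= D_{\mathcal C}(F, G),
\end{align*}
where the penultimate equality uses the pointwise invariance established in the first paragraph.

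For the set version, I would take the infimum of both sides of the identity $D_{\mathcal C}(\phi_* F, \phi_* G) = D_{\mathcal C}(F, G)$ over $F \in \mathcal F$ and $G \in \mathcal G$. Since $F \mapsto \phi_* F$ is a bijection $\mathcal F \to \phi_* \mathcal F$ and likewise $G \mapsto \phi_* G$ is a bijection $\mathcal G \to \phi_* \mathcal G$, these infima equal $D_{\mathcal C}(\phi_* \mathcal F, \phi_* \mathcal G)$ on the one side and $D_{\mathcal C}(\mathcal F, \mathcal G)$ on the other. The only real obstacle in this proof is the two-sided data processing argument in the first step, which crucially uses that $\phi^{-1}$ is measurable; without this, only the inequality $D_{\mathcal C}(\phi_* F, \phi_* G) \le D_{\mathcal C}(F, G)$ would be available.
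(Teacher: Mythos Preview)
Your proof is correct and follows exactly the approach the paper uses: two applications of the data-processing inequality (one for $\phi$, one for $\phi^{-1}$) give $\KL(\phi_*P,\phi_*Q)=\KL(P,Q)$, and the invariance of $D_{\mathcal C}$ then follows. You have simply spelled out the bijection-on-measures argument for passing from $\KL$ to $D_{\mathcal C}$ that the paper leaves implicit in the phrase ``hence it holds for $D_{\mathcal C}$.''
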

\begin{proof}
The same equality is true for KL by two applications of the data processing inequality, hence it holds for $D_{\mathcal C}$.
\end{proof}

\begin{lemma}\label{lem:divergence_involution}
If there exists a measurable involution $\phi$ on $\Omega$ such that $\mathcal G = \phi_* \mathcal F$ and $\mathcal F$ is convex, then
\begin{enumerate}
  \item $D_{\mathcal C}(\mathcal F, \mathcal G) = \inf_{F \in \mathcal F}D_{\mathcal C}(F, \phi_* F)$ ; $D_{1/2}(\mathcal F, \mathcal G) = \inf_{F \in \mathcal F}D_{1/2}(F, \phi_* F)$ ;
  \item for all $F \in \mathcal F$, $D_{\mathcal C}(F, \phi_*F) = \inf_{P \in \mathcal P(\Omega), \phi_*P = P} \KL(P, F)$\\ and $D_{1/2}(F, \phi_*F) = \inf_{P \in \mathcal P(\Omega), \phi_*P = P} 2\KL(P, F)$ ;
  \item $D_{1/2}(\mathcal F, \mathcal G) = 2 D_{\mathcal C}(\mathcal F, \mathcal G)$ .
\end{enumerate}

\end{lemma}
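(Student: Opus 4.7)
My plan is to prove the three items in the order (2), (1), (3), with (3) falling out as an immediate consequence. The backbone of everything is the identity $\KL(\phi_* P, \phi_* F) = \KL(P, F)$, a direct consequence of the data processing inequality applied to the involution $\phi$ and to its inverse (which is $\phi$ itself).

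For Part 2, I will prove $D_{\mathcal C}(F, \phi_* F) = \inf_{P : \phi_* P = P} \KL(P, F)$ by symmetrizing the candidate minimizer $P$. The $\le$ direction is immediate: if $\phi_* P = P$ then $\KL(P, \phi_* F) = \KL(\phi_* P, \phi_* F) = \KL(P, F)$, so the max in the definition of $D_{\mathcal C}$ collapses to $\KL(P, F)$. For the reverse direction I will take any $P$, form $P_s = (P + \phi_* P)/2$ (which is $\phi$-invariant by construction), and apply convexity of $\KL$ in its first argument together with the identity above to obtain
\[
\KL(P_s, F) \le \tfrac12 \KL(P, F) + \tfrac12 \KL(\phi_* P, F) = \tfrac12 \KL(P, F) + \tfrac12 \KL(P, \phi_* F) \le \max\{\KL(P, F), \KL(P, \phi_* F)\}.
\]
Taking the infimum over $P$ gives the wanted inequality. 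The same symmetrization applied to the representation $D_{1/2}(F, G) = \inf_P (\KL(P, F) + \KL(P, G))$ already used in Lemma~\ref{lem:data_processing} yields $D_{1/2}(F, \phi_* F) = \inf_{P : \phi_* P = P} 2 \KL(P, F)$.

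For Part 1, the inequality $D_{\mathcal C}(\mathcal F, \mathcal G) \le \inf_{F \in \mathcal F} D_{\mathcal C}(F, \phi_* F)$ is trivial since $\phi_* F \in \mathcal G$. For the reverse I will use joint convexity of $D_{\mathcal C}$ and $D_{1/2}$ in $(F, G)$: in each case the divergence is an infimum over $P$ of a function of $(P, F, G)$ that is jointly convex (by joint convexity of $\KL$, together with the fact that max and weighted sum preserve joint convexity), and partial minimization over $P$ preserves joint convexity in the remaining variables. Given $F \in \mathcal F$ and $G \in \mathcal G$, I will write $G = \phi_* H$ with $H \in \mathcal F$, set $F_m = (F + H)/2 \in \mathcal F$, and observe that $\phi_* F_m = (\phi_* F + \phi_* H)/2$ by linearity of the pushforward. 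Joint convexity then yields
\[
D_{\mathcal C}(F_m, \phi_* F_m) \le \tfrac12 D_{\mathcal C}(F, \phi_* H) + \tfrac12 D_{\mathcal C}(H, \phi_* F).
\]
Applying Lemma~\ref{lem:DC_measurable_equiv} to $\phi$ and using symmetry of $D_{\mathcal C}$ gives $D_{\mathcal C}(H, \phi_* F) = D_{\mathcal C}(\phi_* H, F) = D_{\mathcal C}(F, \phi_* H)$, so the right-hand side collapses to $D_{\mathcal C}(F, G)$. Taking infima over $F \in \mathcal F$ and $G \in \mathcal G$ yields $\inf_{F' \in \mathcal F} D_{\mathcal C}(F', \phi_* F') \le D_{\mathcal C}(\mathcal F, \mathcal G)$. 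The same argument runs verbatim for $D_{1/2}$.

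Part 3 is then immediate by chaining: $D_{1/2}(\mathcal F, \mathcal G) = \inf_{F \in \mathcal F} D_{1/2}(F, \phi_* F) = \inf_{F \in \mathcal F} \inf_{P : \phi_* P = P} 2 \KL(P, F) = 2 D_{\mathcal C}(\mathcal F, \mathcal G)$. The only mildly technical obstacle I anticipate is making joint convexity of $D_{\mathcal C}$ explicit, since the entire midpoint argument for Part 1 rests on it; but this reduces to the standard fact that marginalizing out a variable from a jointly convex function preserves joint convexity in the remaining variables, applied to $(P, F, G) \mapsto \max\{\KL(P, F), \KL(P, G)\}$.
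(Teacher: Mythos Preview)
Your proposal is correct and follows essentially the same approach as the paper. The paper proves the items in the order (1), (2), (3) while you do (2), (1), (3), but the arguments are identical: Part~2 via the symmetrization $P_s = (P + \phi_*P)/2$ and convexity of $\KL$ in its first argument, Part~1 via joint convexity of the divergence applied to the midpoint $(F+\phi_*G)/2$ (your $F_m$, since $H=\phi_*G$), and Part~3 by combining the two. Your explicit justification of joint convexity of $D_{\mathcal C}$ via partial minimization of a jointly convex function is a welcome addition that the paper simply asserts.
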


We will use this lemma for $\Omega = \mathbb{R}$, for which $\phi$ will be the function $x \mapsto c - x$ for some $c \in \mathbb{R}$.
$\mathcal F$ and $\mathcal G$ in this case could for example be the measures with mean less than $c - y$ and larger than $c + y$ respectively.

\begin{proof}[of Lemma~\ref{lem:divergence_involution}]
1. Let $\phi$ be an  involution satisfying the hypotheses of the lemma. Let $F \in \mathcal F$ and $G \in \mathcal G$. Then by joint convexity of $D_{\mathcal C}$,
\begin{align*}
D_{\mathcal C}\left(\frac{1}{2}F + \frac{1}{2}\phi_* G, \frac{1}{2}\phi_*F + \frac{1}{2} G\right)
\le \frac{1}{2}D_{\mathcal C}(F, G) + \frac{1}{2} D_{\mathcal C}(\phi_*F, \phi_*G)
= D_{\mathcal C}(F, G)
\: .
\end{align*}
By convexity of $\mathcal F$ and $\mathcal G$, $\frac{1}{2}F + \frac{1}{2}\phi_* G \in \mathcal F$ and $\frac{1}{2}\phi_*F + \frac{1}{2} G \in \mathcal G$. Furthermore, $\phi_*\left(\frac{1}{2}F + \frac{1}{2}\phi_* G\right) = \frac{1}{2}\phi_*F + \frac{1}{2} G$. We conclude that
$D_{\mathcal C}(\mathcal F, \mathcal G) = \inf_{F \in \mathcal F} D_{\mathcal C}(F, \phi_*F)$ .
Since $D_{1/2}$ is also jointly convex, the same proof holds.

2. Let's now argue that for $F \in \mathcal F$, $D_{\mathcal C}(F, \phi_*F) = \inf_{P \in \mathcal P(\Omega), \phi_*P = P} \max\{\KL(P, F), \KL(P, \phi_*F)\} = \inf_{P \in \mathcal P(\Omega), \phi_*P = P} \KL(P, F)$. The difference between the first equality and the definition is the added condition $\phi_*P = P$. The second equality is a consequence of Lemma~\ref{lem:DC_measurable_equiv}.
\begin{align*}
\KL\left(\frac{1}{2}P + \frac{1}{2}\phi_*P, F\right)
\le \frac{1}{2}\KL(P, F) + \frac{1}{2}\KL(\phi_*P, F)
&= \frac{1}{2}\KL(P, F) + \frac{1}{2}\KL(P, \phi_*F)
\\
&\le \max\{\KL(P,F), \KL(P, \phi_*F)\}
\: , \\
\KL\left(\frac{1}{2}P + \frac{1}{2}\phi_*P, \phi_*F\right)
\le \frac{1}{2}\KL(P, \phi_*F) + \frac{1}{2}\KL(\phi_*P, \phi_*F)
&= \frac{1}{2}\KL(P, F) + \frac{1}{2}\KL(P, \phi_*F)
\\
&\le \max\{\KL(P,F), \KL(P, \phi_*F)\}
\: .
\end{align*}
We obtained that $\max\{\KL\left(\frac{1}{2}P + \frac{1}{2}\phi_*P, F\right), \KL(\frac{1}{2}P + \frac{1}{2}\phi_*P, \phi_*F)\} \le \max\{\KL(P,F), \KL(P, \phi_*F)\}$, which proves that we can indeed restrict the infimum to distributions $P$ with $\phi_*P = P$.

For $D_{1/2}$, we have $D_{1/2}(F,\phi_*F) = \inf_{P \in \mathcal P(\Omega)}(\KL(P,F) + \KL(P, \phi_*F))$. We similarly argue that we can take $P$ such that $\phi_*P=P$ : 
we also obtained above that $\KL(\frac{1}{2}P + \frac{1}{2}\phi_*P, F) + \KL(\frac{1}{2}P + \frac{1}{2}\phi_*P, \phi_*F) \le \KL(P, F) + \KL(P, \phi_*F)$.
We can thus restrict the infimum and for $P$ with $\phi_*P=P$, we have $\KL(P, F) + \KL(P, \phi_*F) = 2 \KL(P,F)$.

3. Immediate consequence of 1 and 2.
\end{proof}

\begin{theorem}\label{thm:asymptotic_testing}
Let $\mathcal F$ and $\mathcal G$ be two sets of probability distributions on $\Omega$. For $n \in \mathbb{N}$, let $E_n$ be an event on $\Omega^n$. Then
\begin{align*}
\limsup_{n \to +\infty} \frac{1}{n}\log\frac{1}{2\max\{\sup_{F \in \mathcal F} F^{\otimes n}(E_n), \sup_{G \in \mathcal G} G^{\otimes n}(E_n^c)\}}
\le D_{\mathcal C}(\mathcal F, \mathcal G) \: .
\end{align*}
\end{theorem}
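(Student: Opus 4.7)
The plan is to apply the three-measure change-of-measure bound \eqref{eq:kl_change_measure_3_points} of Lemma~\ref{lem:change_of_measure} together with the weak law of large numbers, as flagged in the discussion following Theorem~\ref{thm:non_asymptotic_testing}. Compared to the one-shot bound of Theorem~\ref{thm:non_asymptotic_testing}, the asymptotic regime lets the remainder terms in \eqref{eq:kl_change_measure_3_points} vanish, and this is exactly what removes the spurious factor of $2$ in front of $D_{\mathcal C}$.

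First I would dispose of the trivial case $D_{\mathcal C}(\mathcal F,\mathcal G) = +\infty$. Otherwise, for $\epsilon > 0$ I would pick $F_0 \in \mathcal F$, $G_0 \in \mathcal G$ and $P \in \mathcal P(\Omega)$ with $\max\{\KL(P, F_0), \KL(P, G_0)\} \le D_{\mathcal C}(\mathcal F,\mathcal G) + \epsilon$; both Kullback-Leibler divergences are then finite, which will be crucial. For any $\beta > 0$, applying \eqref{eq:kl_change_measure_3_points} with $P, F_0, G_0$ and the event $E_n$, and using $\max\{F_0^{\otimes n}(E_n), G_0^{\otimes n}(E_n^c)\} \le M_n$ where $M_n := \max\{\sup_{F \in \mathcal F}F^{\otimes n}(E_n), \sup_{G \in \mathcal G}G^{\otimes n}(E_n^c)\}$, yields
\begin{align*}
2 M_n\, e^{n\max\{\KL(P,F_0),\KL(P,G_0)\} + n\beta} \ge 1 - \pi_n^F - \pi_n^G,
\end{align*}
where $\pi_n^F$ (resp.\ $\pi_n^G$) is the $P^{\otimes n}$-probability that the empirical mean of $\log(dP/dF_0)$ (resp.\ $\log(dP/dG_0)$) exceeds its expectation $\KL(P, F_0)$ (resp.\ $\KL(P, G_0)$) by more than $\beta$.

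The core of the argument is to show $\pi_n^F, \pi_n^G \to 0$ via the weak law of large numbers, which requires $\log(dP/dF_0)$ to be $P$-integrable. The inequality $\log x \le x$ for $x > 0$ gives $\E_P[(\log(dP/dF_0))^-] = \E_P[(\log(dF_0/dP))^+] \le \E_P[dF_0/dP] \le 1$, so the finiteness of $\KL(P, F_0) = \E_P[\log(dP/dF_0)]$ forces the positive part to be integrable as well, and WLLN applies (likewise for $G_0$). Once $\pi_n^F + \pi_n^G < 1/2$ for $n$ large enough, taking logarithms in the display above gives
\begin{align*}
\frac{1}{n}\log\frac{1}{2 M_n} \le \max\{\KL(P, F_0), \KL(P, G_0)\} + \beta + \frac{\log 2}{n} \le D_{\mathcal C}(\mathcal F,\mathcal G) + \epsilon + \beta + \frac{\log 2}{n}.
\end{align*}
Passing to $\limsup_n$ and then letting $\epsilon, \beta \downarrow 0$ concludes the proof. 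The only delicate point I foresee is this integrability justification for WLLN; after that, the remainder of the argument is essentially bookkeeping around \eqref{eq:kl_change_measure_3_points}.
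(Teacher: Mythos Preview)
Your proposal is correct and follows essentially the same route as the paper: apply \eqref{eq:kl_change_measure_3_points} with a near-optimal triple $(P,F_0,G_0)$, use the weak law of large numbers to make the deviation probabilities vanish, take logarithms, pass to the $\limsup$, and then send the slack parameters to zero. The only cosmetic differences are that the paper takes infima over $P,F,G$ at the end rather than fixing $\epsilon$-optimal choices up front, and handles integrability by simply restricting to $P$ with $P$-integrable log-likelihood ratios (noting other $P$ are irrelevant for $D_{\mathcal C}$), whereas you spell out why finite $\KL$ forces integrability.
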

This theorem is informative when the left-hand side does not go to zero. That is, it quantifies how exponentially small $\max\{\sup_{F \in \mathcal F} F^{\otimes n}(E_n), \sup_{G \in \mathcal G} G^{\otimes n}(E_n^c)\}$ can get when $n$ grows.
In order to get estimation lower bounds, we will on the other hand be interested in increasing sets of distributions $\mathcal F_n$ and $\mathcal G_n$ such that $D_{\mathcal C}(\mathcal F_n, \mathcal G_n) \to_{n \to \infty} 0$, and we will have to use other tools to get asymptotic bounds without the factor $2$ of Theorem~\ref{thm:non_asymptotic_testing}. 

\begin{proof}[of Theorem~\ref{thm:asymptotic_testing}]
Let $n \in \mathbb{N}$, $F \in \mathcal F$ and $G \in \mathcal G$. Let any $x > 0$ and $P \in \mathcal P(\Omega)$ with $P \ll F$ and $P \ll G$ such that $\log\frac{dP}{dF}$ and $\log\frac{dP}{dG}$ are $P$-integrable. If these conditions on $P$ are not satisfied, either $\KL(P,F) = +\infty$ or $\KL(P,G)=+\infty$ and then $P$ is irrelevant for the computation of $D_{\mathcal C}(F, G)$.

Let $p_{F,n}(x) = P^{\otimes n}\left( \frac{1}{n}\sum_{i=1}^n \log \frac{dP}{dF}(X_i) - \KL(P, F) > x \right)$ and define $p_{G,n}(x)$ similarly.
By Lemma~\ref{lem:change_of_measure}, Equation~\eqref{eq:kl_change_measure_3_points},
\begin{align*}
2 \max\{F^{\otimes n}(E_n), G^{\otimes n}(E_n^c)\} e^{n \max\{\KL(P, F), \KL(P, G)\} + nx}
&\ge 1 - p_{F,n}(x) - p_{G,n}(x)
\: .
\end{align*}
By the law of large numbers, for any $x > 0$, $p_{F,n}(x) \to_{n \to \infty} 0$ and $p_{G,n}(x) \to_{n \to \infty} 0$.
For $n$ large enough we can take logarithms on both sides of the inequality above and write
\begin{align*}
\log\left(2 \max\{F^{\otimes n}(E_n), G^{\otimes n}(E_n^c)\}\right) + n \max\{\KL(P, F), \KL(P, G)\} + n x
&\ge \log\left(1 - p_{F,n}(x) - p_{G,n}(x)\right)
\: .
\end{align*}
Reordering,
\begin{align*}
\frac{1}{n}\log\frac{1}{2 \max\{F^{\otimes n}(E_n), G^{\otimes n}(E_n^c)\}} + \log\left(1 - p_{F,n}(x) - p_{G,n}(x)\right) - x
&\le \max\{\KL(P, F), \KL(P, G)\}
\: .
\end{align*}
We take a limit as $n \to +\infty$
\begin{align*}
\limsup_{n \to +\infty}\frac{1}{n}\log\frac{1}{2 \max\{F^{\otimes n}(E_n), G^{\otimes n}(E_n^c)\}} - x
&\le \max\{\KL(P, F), \KL(P, G)\}
\: .
\end{align*}
Since $x > 0$ is arbitrary we get the same inequality without that $x$. The left-hand side does not depend on $P$, hence we can take an infimum on the right-hand side over $P$ and get
\begin{align*}
\limsup_{n \to +\infty}\frac{1}{n}\log\frac{1}{2 \max\{F^{\otimes n}(E_n), G^{\otimes n}(E_n^c)\}}
&\le D_{\mathcal C}(F, G)
\: .
\end{align*}

We can upper bound $F^{\otimes n}(E_n)$ by $\sup_{F \in \mathcal F}F^{\otimes n}(E_n)$ and similarly for $G$. At that point, the left-hand side depends on neither $F$ nor $G$: we can take an infimum of the right-hand side over these to get $D_{\mathcal C}(\mathcal F, \mathcal G)$.
\end{proof}

\begin{proof}[of Theorem~\ref{thm:generic_delta_bound}]
This follows from Theorem~\ref{thm:non_asymptotic_testing} applied to $\mathcal{F} = \{F^{\otimes n} \mid F \in \mathcal D,\, \mathbb{E}_F[X] \le c - x(n, \delta, G)\}$ and $\mathcal{G}=\{G^{\otimes n} \mid G \in \mathcal D,\, \mathbb{E}_G[X] \ge c + x(n, \delta, G)\}$ and $E = \{\tilde{\mu}_n \ge c\}$, using that the KL of a product of measures is the sum of the individual KLs (chain rule).
\end{proof}

\section{Proofs: lower bounds for nonparametric estimation}
\label{app:finite_variance}

\subsection{Variance}

\begin{proof}[of Lemma~\ref{lem:renyi_div_variance}]
First, the value is independent of $c$. Indeed, $\phi: x \mapsto x - c$ is an equivalence of measurable spaces between $\mathbb{R}$ and itself, and we can get that conclusion from Lemma~\ref{lem:DC_measurable_equiv}.
Let then set $c = 0$. We can apply Lemma~\ref{lem:divergence_involution} to the involution $\psi: x \mapsto -x$ to get that
\begin{align*}
&D_{1/2}\left(\left\{F \in \mathcal D_2 \mid \mathbb{E}_F[X] \le - \sqrt{2\Var_F[X] x}\right\},
  \left\{G \in \mathcal D_2 \mid \mathbb{E}_G[X] \ge \sqrt{2\Var_G[X] x}\right\}\right)
\\
&= \inf_{F \in \mathcal D_2 \mid \mathbb{E}_F[X] \le - \sqrt{2\Var_F[X] x}} D_{1/2}(F, \psi_*F) \: ,
\end{align*}
and that $D_{1/2} = 2D_{\mathcal C}$ for these sets. As a consequence of the above, we have that
\begin{align*}
&D_{1/2}\left(\left\{F \in \mathcal D_2 \mid \mathbb{E}_F[X] \le - \sqrt{2\Var_F[X] x}\right\},
  \left\{G \in \mathcal D_2 \mid \mathbb{E}_G[X] \ge \sqrt{2\Var_G[X] x}\right\}\right)
\\
&= \inf_{m > 0, \sigma > 0, m/\sigma \ge \sqrt{2 x}} D_{1/2}\left(\left\{F \mid \mathbb{E}_F[X] = -m, \Var_F[X] = \sigma^2\right\},
  \left\{G \mid \mathbb{E}_G[X] = m, \Var_G[X] = \sigma^2\right\}\right)
\: .
\end{align*}
For any $m$ and $\sigma^2$, the divergence in that last line has been computed in \citep{nishiyama2020tight}. To be exact, that paper computes the minimal Hellinger distance between the two sets, but $D_{1/2}(F, G) = -2\log(1 - \H^2(F, G))$ for any $F, G$. They also provide explicit minimizers, which we will use in a later proof. Their result is 
\begin{align*}
&\H^2\left(\left\{F \mid \mathbb{E}_F[X] = -m, \Var_F[X] = \sigma^2\right\},
  \left\{G \mid \mathbb{E}_G[X] = m, \Var_G[X] = \sigma^2\right\}\right)
\\
&= 1 - \sqrt{1 - \frac{m^2}{m^2 + \sigma^2}} \: .
\end{align*}

We get
\begin{align*}
&D_{1/2}\left(\left\{F \in \mathcal D_2 \mid \mathbb{E}_F[X] \le - \sqrt{2\Var_F[X] x}\right\},
  \left\{G \in \mathcal D_2 \mid \mathbb{E}_G[X] \ge \sqrt{2\Var_G[X] x}\right\}\right)
\\
&= \inf_{m > 0, \sigma > 0, m/\sigma \ge \sqrt{2 x}}\log \left(1 + \frac{m^2}{\sigma^2}\right)
\\
&= \log(1 + 2x) \: .
\end{align*}
Note that this also proves that 
\begin{align*}
&D_{1/2}\left(\left\{F \in \mathcal D_2 \mid \mathbb{E}_F[X] \le - \sqrt{2\Var_F[X] x}\right\},
  \left\{G \in \mathcal D_2 \mid \mathbb{E}_G[X] \ge \sqrt{2\Var_G[X] x}\right\}\right)
\\
&= D_{1/2}\left(\left\{F \mid \mathbb{E}_F[X] = -\sqrt{2 x}, \Var_F[X] = 1\right\},
  \left\{G \mid \mathbb{E}_G[X] = \sqrt{2 x}, \Var_G[X] = 1\right\}\right)
\: .
\end{align*}

\end{proof}

\begin{proof}[of Theorem~\ref{thm:lower_bound_finite_var}]
The first inequality is an immediate consequence of Theorem~\ref{thm:generic_delta_bound} and Lemma~\ref{lem:renyi_div_variance}.

Let's prove the second inequality. As in the proof of the previous lemma, we first remark that
\begin{align*}
&D_{1/2}\left(\left\{F \in \mathcal D_2 \mid \mathbb{E}_F[X] \le - \sqrt{2\Var_F[X] y(n, \delta)}\right\},
  \left\{G \in \mathcal D_2 \mid \mathbb{E}_G[X] \ge \sqrt{2\Var_G[X] y(n, \delta)}\right\}\right)
\\
&= D_{1/2}\left(\left\{F \mid \mathbb{E}_F[X] = -\sqrt{2 y(n, \delta)}, \Var_F[X] = 1\right\},
  \left\{G \mid \mathbb{E}_G[X] = \sqrt{2 y(n, \delta)}, \Var_G[X] = 1\right\}\right)
\: .
\end{align*}
The explicit minimizers for the Rényi divergence between these two sets have been computed in \citep{nishiyama2020tight}. We use them and thus define $u_n = \sqrt{1 + 2 y(n, \delta)}$ and three distributions $P_n, F_n$ and $G_n$ supported on $u_n$ and $-u_n$, with $P_n(\{u_n\}) = 1/2$ , $F_n(\{u_n\}) = \frac{1}{2} - \frac{1}{2}\sqrt{\frac{2y(n, \delta)}{1 + 2y(n, \delta)}}$ and $G_n(\{u_n\}) = 1 - F_n(u_n)$.

$F_n$ and $G_n$ have mean $-\sqrt{2 y(n, \delta)}$ and $\sqrt{2 y(n, \delta)}$ respectively, and variance 1. Hence, $F_n \in \left\{F \in \mathcal D_2 \mid \mathbb{E}_F[X] \le -\sqrt{2\Var_F[X] y(n, \delta)}\right\}$ and $G_n \in \left\{G \in \mathcal D_2 \mid \mathbb{E}_G[X] \ge \sqrt{2\Var_G[X] y(n, \delta)}\right\}$~.
The log-likelihood ratio $\log\frac{dP_n}{dF_n}$ is supported on $\log (\sqrt{1 + 2y(n, \delta)}) \pm \log(\sqrt{1 + 2y(n, \delta)} + \sqrt{2 y(n, \delta)})$, with probability $1/2$ for each point.
$\log\frac{dP_n}{dG_n}$ has the same distribution. $\KL(P_n, F_n) = \log (\sqrt{1 + 2y(n, \delta)})$ and the variance of $\log\frac{dP_n}{dF_n}$ is $V_n = \log^2(\sqrt{1 + 2y(n, \delta)} + \sqrt{2 y(n, \delta)})$. We denote $\KL(P_n, F_n)$ (equal to $\KL(P_n, G_n)$) by $\KL_n$.

By Lemma~\ref{lem:change_of_measure}, Equation~\eqref{eq:kl_change_measure_3_points}, for any $x_n > 0$,
\begin{align*}
2 \delta e^{n \max\{\KL(P_n, F_n), \KL(P_n, G_n)\} + n x_n}
&\ge 1 - P_n\left( \frac{1}{n}\sum_{i=1}^n \log \frac{dP_n}{dF_n}(X_i) - \KL(P_n, F_n) > x_n \right)
  \\&\quad - P_n\left( \frac{1}{n}\sum_{i=1}^n \log \frac{dP_n}{dG_n}(X_i) - \KL(P_n, G_n) > x_n \right)
\: .
\end{align*}
Using Chebyshev's inequality, we get $2 \delta e^{n \KL_n + n x_n} \ge 1 - \frac{2V_n}{n x_n^2}$ . We then choose $x_n = \sqrt{\frac{4 V_n}{n}}$, reorder the equation and obtain $\frac{1}{n}\log \frac{1}{4 \delta} \le \KL_n + 2\sqrt{\frac{V_n}{n}}$ , which is the second bound of the theorem.
\end{proof}

\paragraph{Estimator}

\begin{lemma}\label{lem:E1}\cite[Lemma E.1]{agrawal2021optimal}
Let $\Lambda \subseteq \mathbb{R}^d$ be a compact and convex set and let $q$ be the uniform distribution on $\Lambda$. Let $(g_n)_{n \in \mathbb{N}}$ be exp-concave functions $\Lambda \to \mathbb{R}$. Then
\begin{align*}
\max_{\lambda \in \Lambda}\sum_{n=1}^N g_n(\lambda) \le \log \mathbb{E}_{\lambda \sim q}\left[ e^{\sum_{n=1}^N g_n(\lambda)} \right] + d \log (N + 1) + 1 \: .
\end{align*}
\end{lemma}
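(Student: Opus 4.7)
The plan is to use a volumetric shrinkage argument (originally due to Vovk / Cesa-Bianchi-Lugosi style analyses of exp-concave aggregation) combined with the exp-concavity assumption on each $g_n$. Let $\lambda^\star \in \arg\max_{\lambda \in \Lambda}\sum_{n=1}^N g_n(\lambda)$, which exists by compactness of $\Lambda$ and (semi-)continuity of the $g_n$ implied by exp-concavity on a convex set. For a parameter $\epsilon \in (0,1)$ to be tuned later, introduce the shrunken set
$$\Lambda_\epsilon := (1-\epsilon)\lambda^\star + \epsilon\,\Lambda = \{(1-\epsilon)\lambda^\star + \epsilon\mu : \mu \in \Lambda\}.$$
By convexity $\Lambda_\epsilon \subseteq \Lambda$, and since $\Lambda_\epsilon$ is an affine image of $\Lambda$ scaled by the factor $\epsilon$, its Lebesgue volume satisfies $\mathrm{Vol}(\Lambda_\epsilon) = \epsilon^d \mathrm{Vol}(\Lambda)$.

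The key step is an exp-concavity inequality at each round. For any $\lambda = (1-\epsilon)\lambda^\star + \epsilon\mu \in \Lambda_\epsilon$, concavity of $e^{g_n}$ gives
$$e^{g_n(\lambda)} \ge (1-\epsilon)\,e^{g_n(\lambda^\star)} + \epsilon\, e^{g_n(\mu)} \ge (1-\epsilon)\,e^{g_n(\lambda^\star)},$$
so $g_n(\lambda) \ge g_n(\lambda^\star) + \log(1-\epsilon)$. Summing over $n=1,\ldots,N$ yields, uniformly in $\lambda \in \Lambda_\epsilon$,
$$\sum_{n=1}^N g_n(\lambda) \ge \sum_{n=1}^N g_n(\lambda^\star) + N \log(1-\epsilon).$$

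The second step is to lower bound the expectation under the uniform distribution $q$ by restricting the integration domain to $\Lambda_\epsilon$ and using the volume ratio:
$$\mathbb{E}_{\lambda\sim q}\!\left[e^{\sum_{n=1}^N g_n(\lambda)}\right] \ge \frac{\mathrm{Vol}(\Lambda_\epsilon)}{\mathrm{Vol}(\Lambda)} (1-\epsilon)^N e^{\sum_{n=1}^N g_n(\lambda^\star)} = \epsilon^d (1-\epsilon)^N e^{\sum_{n=1}^N g_n(\lambda^\star)}.$$
Taking logs and rearranging,
$$\sum_{n=1}^N g_n(\lambda^\star) \le \log\mathbb{E}_{\lambda\sim q}\!\left[e^{\sum_{n=1}^N g_n(\lambda)}\right] - d\log\epsilon - N\log(1-\epsilon).$$

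The final step is to optimize the free parameter. Choosing $\epsilon = 1/(N+1)$ yields $-d\log\epsilon = d\log(N+1)$ and $-N\log(1-\epsilon) = N\log\!\bigl(1+\tfrac{1}{N}\bigr) \le 1$ (using $\log(1+x)\le x$), giving exactly the stated bound. The main subtlety — rather than a hard obstacle — is ensuring that the lower-bounding of the expectation by an integral over $\Lambda_\epsilon$ is valid; this relies only on nonnegativity of the exponential and on the affine-scaling formula for the volume, so it goes through without further assumptions beyond compactness and convexity of $\Lambda$.
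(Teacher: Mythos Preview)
The paper does not give its own proof of this lemma; it is simply quoted from \cite{agrawal2021optimal}. Your argument is correct and is in fact the standard Cover--Ordentlich volumetric proof that underlies the cited result (and, more generally, the regret analysis of the exponentially weighted forecaster with exp-concave losses): shrink $\Lambda$ towards the maximizer by a factor $\epsilon$, use exp-concavity to lose at most $\log(1-\epsilon)$ per round on that shrunken set, pay $d\log(1/\epsilon)$ for the volume ratio, and optimize $\epsilon=1/(N+1)$. The only implicit assumption worth making explicit is that $\Lambda$ has nonempty interior so that $q$ is normalized Lebesgue measure and the identity $\mathrm{Vol}(\Lambda_\epsilon)=\epsilon^d\,\mathrm{Vol}(\Lambda)$ is meaningful; this is satisfied in the paper's application, where $\Lambda\subset\mathbb{R}^2$.
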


\begin{proof}[of Lemma~\ref{lem:Kinf_concentration}]
Since we can shift both distributions in the KL we can w.l.o.g. consider the case where $P$ has mean 0 and $0 \le m$.
\begin{align*}
\inf_{F, \mathbb{E}_F[X] \le m, \Var_F[X] \le \sigma^2}\KL(\hat{P}_n, F)
\le \inf_{F, \mathbb{E}_F[X] \le 0, \Var_F[X] \le \sigma^2}\KL(\hat{P}_n, F) \: .
\end{align*}
Then we get rid of the dependence of the variance constraint on the mean of the distribution to obtain two linear constraints on $F$,
\begin{align*}
\inf_{F, \mathbb{E}_F[X] \le 0, \Var_F[X] \le \sigma^2}\KL(\hat{P}_n, F)
&= \inf_{\mu \le 0}\inf_{F, \mathbb{E}_F[X] = \mu, \mathbb{E}_F[X^2] \le \sigma^2 + \mu^2}\KL(\hat{P}_n, F)
\\
&\le \inf_{\mu \le 0}\inf_{F, \mathbb{E}_F[X] = \mu, \mathbb{E}_F[X^2] \le \sigma^2}\KL(\hat{P}_n, F)
\\
&= \inf_{F, \mathbb{E}_F[X] \le 0, \mathbb{E}_F[X^2] \le \sigma^2}\KL(\hat{P}_n, F)
\: .
\end{align*}
This infimum of a KL has a dual formulation as a supremum \citep{agrawal2021optimal,honda2010asymptotically,honda2011asymptotically,garivier2018kl}.
\begin{align*}
\inf_{F, \mathbb{E}_F[X] \le 0, \mathbb{E}_F[X^2] \le \sigma^2}\KL(\hat{P}_n, F)
&= \frac{1}{n} \sup_{\lambda \in \Lambda} \sum_{i=1}^n \log \left( 1 + \lambda_1 X_i + \lambda_2(X_i^2 - \sigma^2) \right)
\end{align*}
where $\Lambda = \{\lambda \in \mathbb{R}_+^2 \mid \forall x \in \mathbb{R},\ 1 + \lambda_1 x + \lambda_2(x^2 - \sigma^2) \ge 0 \}$ is a compact set.
We can then apply Lemma~\ref{lem:E1} with $d = 2$, that set $\Lambda$ and $g_n(\lambda) = \log \left( 1 + \lambda_1 X_n + \lambda_2(X_n^2 - \sigma^2) \right)$.
\begin{align*}
&\max_{\lambda \in \Lambda} \sum_{i=1}^n \log(1 + \lambda_1 X_i + \lambda_2(X_i^2 - \sigma^2))
\\
&\le \log \mathbb{E}_{\lambda \sim q}\left[ \prod_{i=1}^n (1 + \lambda_1 X_i + \lambda_2(X_i^2 - \sigma^2)) \right] + 2 \log (n + 1) + 1 \: .
\end{align*}
Finally, under $P$, $\mathbb{E}_{\lambda \sim q}\left[ \prod_{i=1}^n (1 + \lambda_1 X_i + \lambda_2(X_i^2 - \sigma^2)) \right]$ is a non-negative supermartingale with expectation at most 1, which means that with probability $1 - \delta$ it is less than $1/\delta$.

\end{proof}
\begin{proof}[of Theorem~\ref{thm:estimator_upper_bound}]
We first remark that for all $c \in \mathbb{R}$,
\begin{align*}
\max\{\hat{d}_{L,n}(c), \hat{d}_{R,n}(c)\}
&= \max\{\inf_{F \in \mathcal F_c}\KL(\hat{P}_n, F), \inf_{G \in \mathcal G_c}\KL(\hat{P}_n, G)\}
\\
&\ge \min_{c \in \mathbb{R}} \inf_{P \in \mathcal{P}(\mathbb{R})} \max\{\inf_{F \in \mathcal F_c}\KL(P, F), \inf_{G \in \mathcal G_c}\KL(P, G)\}
\\
&= \frac{1}{2}\log(1 + 2 y(n, \delta))
\: .
\end{align*}

Suppose that on distribution $F$ with mean $m_F$ and variance $\sigma_F^2$, the estimator $\tilde{\mu}_n$ makes an error larger than $\sqrt{2\sigma_F^2 y(n, \delta)}$. Suppose for example that $\tilde{\mu}_n \ge m_F + \sqrt{2\sigma_F^2 y(n, \delta)}$.
Then by definition of $\tilde{\mu}_n$, $d_{L,n}(m_F + \sqrt{2\sigma_F^2 y(n, \delta)}) = \max\{d_{L,n}(m_F + \sqrt{2\sigma_F^2 y(n, \delta)}), d_{R,n}(m_F + \sqrt{2\sigma_F^2 y(n, \delta)})\}$.
We have then
\begin{align*}
\frac{1}{2}\log(1 + 2 y(n, \delta))
&\le \max\{d_{L,n}(m_F + \sqrt{2\sigma_F^2 y(n, \delta)}), d_{R,n}(m_F + \sqrt{2\sigma_F^2 y(n, \delta)})\}
\\
&= d_{L,n}(m_F + \sqrt{2\sigma_F^2 y(n, \delta)})
\\
&= \inf_{G, \mathbb{E}_G[X] \le m_F + \sqrt{2\sigma_F^2 y(n, \delta)} - \sqrt{2\Var_G[X] y(n, \delta)}} \KL(\hat{F}_n, G)
\\
&\le \inf_{G, \Var_G[X] \le \sigma_F^2, \mathbb{E}_G[X] \le m_F + \sqrt{2\sigma_F^2 y(n, \delta)} - \sqrt{2\Var_G[X] y(n, \delta)}} \KL(\hat{F}_n, G)
\\
&\le \inf_{G, \Var_G[X] \le \sigma_F^2, \mathbb{E}_G[X] \le m_F} \KL(\hat{F}_n, G)
\: .
\end{align*}
We proceed similarly in the case $\tilde{\mu}_n \le m_F - \sqrt{2\sigma_F^2 y(n, \delta)}$, but using $\hat{d}_{R,n}$ instead of $\hat{d}_{L,n}$, to obtain $\frac{1}{2}\log(1 + 2 y(n, \delta)) \le \inf_{G, \Var_G[X] \le \sigma_F^2, \mathbb{E}_G[X] \ge m_F} \KL(\hat{F}_n, G)$.

By the concentration lemma~\ref{lem:Kinf_concentration}, with probability $1 - \delta$, if $\tilde{\mu}_n$ makes an error larger than $\sqrt{2\sigma_F^2 y(n, \delta)}$ then
\begin{align*}
\inf_{G, \Var_G[X] \le \sigma_F^2, \mathbb{E}_G[X] \le m_F} \KL(\hat{F}_n, G) &< \frac{1}{n}\log\frac{2e(n+1)^2}{\delta}
\: , \\
\inf_{G, \Var_G[X] \le \sigma_F^2, \mathbb{E}_G[X] \ge m_F} \KL(\hat{F}_n, G) &< \frac{1}{n}\log\frac{2e(n+1)^2}{\delta}
\: .
\end{align*}
Then with probability $1 - \delta$, if there is a large error then $\log(1 + 2 y(n, \delta)) < \frac{2}{n}\log\frac{2e(n+1)^2}{\delta}$. This is a contradiction with the definition of $y(n, \delta)$. We conclude that with probability $1 - \delta$, the error is smaller than $\sqrt{2\sigma_F^2 y(n, \delta)}$.
\end{proof}

\subsection{Moment $1 + \alpha$}

\begin{proof}[of Theorem~\ref{thm:lower_bound_finite_1_alpha}]
Let $p_n \in (0,1)$ and $c > 0$ and consider the distributions $P_n = \delta_0$, $G_n = (1-p_n)\delta_0 + p_n \delta_{c}$, $F_n = (1-p_n) \delta_0 + p_n \delta_{-c}$. \cite{devroye2016sub} use distributions of this form in their lower bound.

By Lemma~\ref{lem:change_of_measure}, Equation~\eqref{eq:kl_change_measure_3_points}, for any $x_n > 0$, if $F_n \in \{F \in \mathcal D_{1+\alpha} \mid \mathbb{E}_{F}[X] \le - \varepsilon_n M_{1+\alpha}(F)\}$ and $G_n \in \{F \in \mathcal D_{1+\alpha} \mid \mathbb{E}_{F}[X] \ge \varepsilon_n M_{1+\alpha}(F)\}$ then
\begin{align*}
2 \delta e^{n \max\{\KL(P_n, F_n), \KL(P_n, G_n)\} + n x_n}
&\ge 1 - P_n\left( \frac{1}{n}\sum_{i=1}^n \log \frac{dP_n}{dF_n}(X_i) - \KL(P_n, F_n) > x_n \right)
  \\&\quad - P_n\left( \frac{1}{n}\sum_{i=1}^n \log \frac{dP_n}{dG_n}(X_i) - \KL(P_n, G_n) > x_n \right)
\: .
\end{align*}
Since $\log\frac{dP_n}{dF_n}$ and $\log\frac{dP_n}{dG_n}$ are Dirac distributions at the point $\KL_n := \KL(P_n, F_n) = \KL(P_n, G_n) = - \log(1 - p_n)$, the two probabilities on the right are equal to 0 and the inequality reduces to
\begin{align*}
2 \delta e^{n \KL_n + n x_n}
&\ge 1
\: .
\end{align*}
We get $\frac{1}{n}\log\frac{1}{2 \delta} \le -\log(1 - p_n)$. We still need to ensure that $F_n \in \{F \in \mathcal D_{1+\alpha} \mid \mathbb{E}_{F}[X] \le - \varepsilon_n M_{1+\alpha}(F)\}$ and $G_n \in \{F \in \mathcal D_{1+\alpha} \mid \mathbb{E}_{F}[X] \ge \varepsilon_n M_{1+\alpha}(F)\}$.
Remark that we have  $\E_{G_n}[X] = p_n c = -\E_{F_n}[X]$ . The moment $1+\alpha$ of these distributions is
\begin{align*}
\left(M_{1+\alpha}(F_n)\right)^{1+\alpha}
= \left(M_{1+\alpha}(G_n)\right)^{1+\alpha}
&= c^{1+\alpha} p_n (1-p_n)\left(p_n^\alpha+(1-p_n)^{\alpha}\right)
\: .
\end{align*}
Let $\eta_n = (1-p_n)\left(p_n^\alpha+(1-p_n)^{\alpha}\right)$. For all $n$, $\eta_n \le 2$. Also, $\eta_n = 1 + o_{p_n \to 0}(1)$.

We thus need
\begin{align*}
p_n c \ge \varepsilon_n c p_n^{1/(1+\alpha)} \eta_n^{1/(1+\alpha)} \: .
\end{align*}
Let us choose $p_n = \varepsilon_n^{\frac{1 + \alpha}{\alpha}} 2^{1/\alpha}$, which satisfies this inequality. Using $-\log(1 - p_n) \le p_n$, we obtain the lower bound
\begin{align*}
\frac{1}{2^{\frac{1}{1 + \alpha}}}\left(\frac{1}{n}\log\frac{1}{2 \delta}\right)^{\frac{\alpha}{1 + \alpha}}
\le \varepsilon_n
\: .
\end{align*}
If $\log(1/\delta)/n \to 0$, then we can take $p_n \to 0$, such that $\eta_n = 1 + o(1)$ and the constant 2 is replaced by $1 + o(1)$.
\end{proof}

\subsection{Log-Lipschitz condition}

\begin{proof}[of Theorem~\ref{thm:non_asymptotic_lip}]
Let $h \in \R$. By inequality between KL and $\chi^2$ distance (see for instance \citep{gibbs2002choosing}), we have
\begin{equation}\label{eq:KL_chi2}
\KL(F(x), F(x+h))\le \int\frac{(f(x+h)-f(x))^2}{f(x)}\d x.
\end{equation}
Then, using that  $\log(f)$ is Lipschitz, we have:\\
$\bullet$ If $f(x+h)\ge f(x)$, then by Lipschitz property $\log(f(x+h))-\log(f(x)) \le Lh$ hence $f(x+h) \le f(x)e^{Lh}$ and 
$$\frac{f(x+h)-f(x)}{f(x)}\le \frac{e^{Lh}f(x)-f(x)}{f(x)}= e^{Lh}-1\,.$$
$\bullet$ If $f(x+h)\le  f(x)$, then by Lipschitz property $\log(f(x))-\log(f(x+h)) \le Lh$ hence $f(x) \le f(x+h)e^{Lh}$ and 
$$\frac{f(x)-f(x+h)}{f(x)}\le \frac{e^{Lh}f(x+h)-f(x+h)}{f(x)}= (e^{Lh}-1)\frac{f(x+h)}{f(x)}\le e^{Lh}-1\,.$$
In both cases, $\frac{|f(x+h)-f(x)|}{f(x)}\le e^{Lh}-1$. Injecting this in Equation~\eqref{eq:KL_chi2} and using the formula for the total variation distance as the $L^1$ distance between the densities, we get
\begin{align*}
\KL(F(x), F(x+h))\le  (e^{Lh}-1)\int|f(x+h)-f(x)|\d x  =(e^{Lh}-1)\mathrm{TV}(F(x), F(x+h))\,.
\end{align*}
Then, by Pinsker's inequality
\begin{align*}
\KL(F(x), F(x+h))\le (e^{Lh}-1)\sqrt{\frac{1}{2}\KL(F(x), F(x+h))}\,,
\end{align*}
which implies
$$ \KL(F(x), F(x+h))\le \frac{1}{2}(e^{Lh}-1)^2.$$
Now, it remains to apply Theorem~\ref{thm:generic_delta_bound} (with the Chernoff divergence) with $G_+ = F(x+\varepsilon_n)$ and $G_- = F(x-\varepsilon_n)$, we have
\begin{align*}
\frac{1}{n}\log \frac{1}{4\delta} \le (e^{L\varepsilon_n}-1)^2
\: .
\end{align*}
Isolate $\varepsilon_n$ to conclude.
\end{proof}

\section{Proof of Lemma~\ref{lem:examples}}\label{sec:proof_lem_example}
\textbf{Gaussian distributions}
Let $\mathcal{D} = \{\mathcal{N}(\mu,\sigma^2), \mu\in \R, \sigma>0 \}$. Using the formula for Hellinger distance between Gaussian distributions
\begin{align*}
D_{1/2}&\left(\left\{F \in \mathcal D \mid \mathbb{E}_F[X] \le c - \sqrt{2\sigma_F^2 y(n, \delta)}\right\}, \left\{G \in \mathcal D \mid \mathbb{E}_G[X] \ge c + \sqrt{2\sigma_G^2 y(n, \delta)}\right\}\right)\\
&= \inf_{\mu_1 \le c - \sqrt{2\sigma_1^2 y(n, \delta)}, \sigma_1>0}\inf_{\mu_2\ge c + \sqrt{2\sigma_2^2 y(n, \delta)}, \sigma_2>0} D_{1/2}\left(\mathcal{N}(\mu_1,\sigma_1^2), \mathcal{N}(\mu_2,\sigma_2^2)\right)\\
&= \inf_{\mu_1 \le c - \sqrt{2\sigma_1^2 y(n, \delta)}, \sigma_1>0}\inf_{\mu_2\ge c + \sqrt{2\sigma_2^2 y(n, \delta)}, \sigma_2>0} - 2 \log\left( \sqrt{\frac{2\sigma_1\sigma_2}{\sigma_1^2+\sigma_2^2}}e^{-\frac{1}{4}\frac{(\mu_1-\mu_2)^2}{\sigma_1^2+\sigma_2^2}}\right).
\end{align*}
This last term depends on $\mu_1$ and $\mu_2$ only through their distance $|\mu_1-\mu_2|$ hence there is in fact no dependency in $c$ and the infimum over $\mu_1,\mu_2$ is obtained for $|\mu_1-\mu_2|= \sqrt{2\sigma_1^2 y(n, \delta)} +\sqrt{2\sigma_2^2 y(n, \delta)}  $. Then, 
\begin{align*}
D_{1/2}&\left(\{G \in \mathcal D \mid \mathbb{E}_G[X] \le c - \sqrt{2\sigma_G^2 y(n, \delta)}\}, \{G \in \mathcal D \mid \mathbb{E}_G[X] \ge c + \sqrt{2\sigma_G^2 y(n, \delta)}\}\right)\\
&= \inf_{\sigma_1,\sigma_2>0}- 2 \log\left(\sqrt{\frac{2\sigma_1\sigma_2}{\sigma_1^2+\sigma_2^2}}e^{-\frac{(\sigma_1+\sigma_2)^2}{2(\sigma_1^2+\sigma_2^2)} y(n, \delta)}\right)\\
\end{align*}
Then, having $2\sigma_1\sigma_2 \le \sigma_1^2+\sigma_2^2$ with equality if and only if $\sigma_1=\sigma_2$, we get
\begin{align*}
D_{1/2}&\left(\{G \in \mathcal D \mid \mathbb{E}_G[X] \le c - \sqrt{2\sigma_G^2 y(n, \delta)}\}, \{G \in \mathcal D \mid \mathbb{E}_G[X] \ge c + \sqrt{2\sigma_G^2 y(n, \delta)}\}\right)\\
&= - 2 \log\left(e^{- y(n, \delta)}\right)= 2y(n,\delta).
\end{align*}

From Theorem~\ref{thm:generic_delta_bound}, we obtain that $\frac{\log(1/4\delta)}{n}\le 2y(n,\delta)$ which concludes this part of the proof.

\textbf{Laplace distributions}
Let $\mathcal{D} = \{\mathrm{Laplace}(\mu,b), \mu\in \R, b>0 \}$ where $F \sim \mathrm{Laplace}(\mu,b)$ has density $f(x)= \frac{1}{2b}\exp(-|x-\mu|/b)$ with variance $2b^2$. We have
\begin{align*}
D_{\mathcal C}&\left(\left\{F \in \mathcal D \mid \mathbb{E}_F[X] \le c - \sqrt{2\sigma_F^2 y(n, \delta)}\right\}, \left\{G \in \mathcal D \mid \mathbb{E}_G[X] \ge c + \sqrt{2\sigma_G^2 y(n, \delta)}\right\}\right)\\
&= \inf_{P \in \mathcal{P}} \max\left\{\inf_{\mu_1 \le c - 2\sqrt{b_1^2 y(n, \delta)}, b_1>0 } \KL(P,\mathrm{Laplace}(\mu_1,b_1)),\inf_{\mu_2 \ge c - 2\sqrt{b_2^2 y(n, \delta)},b_2>0 } \KL(P,\mathrm{Laplace}(\mu_2,b_2)) \right\} 
\end{align*}
Now, choose $P = \mathrm{Laplace}(c,b)$ for some $b>0$, having that the KL between Laplace distribution is invariant by shift of the two distributions, using the formula for KL between two Laplace distributions from~\cite[Appendix A.]{Meyer_2021_CVPR}, we have that for this choice of $P$, the two infimum of KL are equal and then
\begin{align*}
D_{\mathcal C}&\left(\left\{F \in \mathcal D \mid \mathbb{E}_F[X] \le c - \sqrt{2\sigma_F^2 y(n, \delta)}\right\}, \left\{G \in \mathcal D \mid \mathbb{E}_G[X] \ge c + \sqrt{2\sigma_G^2 y(n, \delta)}\right\}\right)\\
&\le   \inf_{ b_1>0 } \KL\left(\mathrm{Laplace}(c,b_1),\mathrm{Laplace}\left(c+2\sqrt{b_1^2 y(n, \delta)},b_1\right)\right)\\
&=  \inf_{ b_1>0 } \left(\frac{b}{b_1}e^{-2\sqrt{b_1^2 y(n, \delta)}/\sigma}+\frac{2\sqrt{b_1^2 y(n, \delta)}}{\sigma}+\log\left(\frac{b_1}{b}\right)-1\right).
\end{align*}
In particular, choosing $\sigma_1=\sigma$, we get
\begin{align*}
D_{\mathcal C}&\left(\left\{F \in \mathcal D \mid \mathbb{E}_F[X] \le c - \sqrt{2\sigma_F^2 y(n, \delta)}\right\}, \left\{G \in \mathcal D \mid \mathbb{E}_G[X] \ge c + \sqrt{2\sigma_G^2 y(n, \delta)}\right\}\right)\\
&\le    e^{-2\sqrt{y(n, \delta)}}+2\sqrt{ y(n, \delta)}-1.
\end{align*}
Choosing $y(n,\delta)= \frac{2}{n}\log\left(\frac{1}{4\delta}\right)$, from Theorem~\ref{thm:generic_delta_bound} we conclude that $y(n,\delta)$ satisfies the following equation:
$$\frac{1}{4}y(n,\delta) \le e^{-2\sqrt{y(n, \delta)}}+2\sqrt{ y(n, \delta)}-1 $$
The function $x \mapsto x^2/4 - e^{-2x}- 2x + 1$ is non-increasing over $[0,\log(8)/2]$ and convex for $x\ge \log(8)/2$, it is equal to $0$ at $0$ and positive for $x=8$, hence necessarily $y(n,\delta)= \frac{2}{n}\log\left(\frac{1}{4\delta}\right)\le 8$. Solve this inequality in $\delta$ to conclude.

\section{Proofs: lower bounds in semi-parametric families}
\label{app:parametric}

\begin{proof}[of Theorem~\ref{thm:without_2}]
Let $\theta \in \Theta$ be such that $0<\I(P_\theta)<\infty$.
We first reduce to testing whether the parameter is higher of lower than $\theta$ and consider two distributions $P_{\theta-h/\sqrt{n}}$ and $P_{\theta+h/\sqrt{n}}$.
Then $\delta \ge \max\{P_{\theta-h/\sqrt{n}}(\hat{\theta}_n \ge c), P_{\theta+h/\sqrt{n}}(\hat{\theta}_n < c)\}$, where $\hat{\theta}_n$ is the estimator.

By Lemma~\ref{lem:change_of_measure}, equation~\eqref{eq:kl_change_measure_3_points}, for any $x>0$,
\begin{align}
&2 \delta e^{n \max\{\KL(P_\theta, P_{\theta - \eta/\sqrt{n}}), \KL(P_\theta, P_{\theta + \eta/\sqrt{n}})\} + \eta\sqrt{\I(P_\theta)} x}\nonumber
\\
&\ge 1 - P_\theta\left( \frac{1}{\eta\sqrt{\I(P_\theta)}}\sum_{i=1}^n \left(\log \frac{p_\theta}{p_{\theta - \eta/\sqrt{n}}}(X_i) - \KL(P_\theta, P_{\theta - \eta/\sqrt{n}})\right) > x \right)\nonumber
  \\&\qquad - P_\theta\left( \frac{1}{\eta\sqrt{\I(P_\theta)}}\sum_{i=1}^n \left(\log \frac{p_\theta}{p_{\theta +\eta/\sqrt{n}}}(X_i) - \KL(P_\theta, P_{\theta + \eta/\sqrt{n}})\right) > x \right)
\: . \label{eq:without_2_ineq}
\end{align}
From Theorem~\ref{thm:clt_qmd}, $\frac{1}{\eta\sqrt{\I(P_\theta)}}\sum_{i=1}^n \left(\log \frac{p_\theta}{p_{\theta - \eta/\sqrt{n}}}(X_i) - \KL(P_\theta, P_{\theta - \eta/\sqrt{n}})\right) \xrightarrow[n \to \infty]{d} \mathcal{N}(0, 1)$.

The right-hand side of \eqref{eq:without_2_ineq} tends to $1 - 2\Phi(-x)$ . The limit of the left-hand side is $2 \delta e^{\frac{1}{2}\eta^2 \I(P_\theta) + \eta\sqrt{\I(P_\theta)} x}$. We obtained
\begin{align*}
\log \frac{1 - 2 \Phi(-x)}{2\delta} \le \frac{1}{2}\eta^2 \I(P_\theta) + \eta \sqrt{\I(P_\theta)} x
\end{align*}
We then minimize over $\theta$ and set $x = 1$, for which $1 - 2 \Phi(-x) \ge 1/2$. We get $\log \frac{1}{4 \delta} \le \frac{1}{2}\eta^2 \uI + \eta \sqrt{\uI}$.
The first statement of the theorem is obtained by solving that quadratic equation in $\eta$.

Let's prove the second result: by the first result, for all $\delta$ we have $\eta_\delta \ge \frac{1}{\sqrt{\uI}}\left(\sqrt{1 + 2 \log\frac{1}{4 \delta}} - 1\right)$.
As $\delta \to 0$, the right-hand side is equivalent to $\sqrt{\frac{2\log(1/\delta)}{\uI}}$.
\end{proof}

\begin{proof}[of Theorem~\ref{thm:local_lowerbound_fisher}]
Let us prove that $\{F_{Y_h}, h \neq 0\}$ is q.m.d. at any $h \neq 0$. By direct differentiation, we have that $h \mapsto f_{Y_h}(x)$ is differentiable for almost all $h$ and with derivative
$$\frac{\partial f_{Y_h}(x)}{\partial h} =  \frac{-1}{h^2}\int_{0}^hf(x-2u)\d u + \frac{1}{h}f(x-2h).$$
this derivative is continuous in any $h \notin (0,a/2,b/2)$. Fisher's information is then 
$$\I(F_{Y_h}) = 4\int \left(\frac{\frac{-1}{h^2}\int_{0}^hf(x-2u)\d u + \frac{1}{h}f(x-2h)}{f_{Y_h}(x)}\right)^2f_{Y_h}(x)\d x.$$
and it is continuous in all $h \notin (0,a/2,b/2)$ as an integral of continuous functions.
Then, from Theorem~\ref{thm:condition q.m.d}, this implies that the family $\{F_{Y_h}, h \notin (0,a/2,b/2)\}$ is q.m.d at any $h \notin (0,a/2,b/2)$.

Then, from Theorem~\ref{thm:without_2}, we have  
\begin{align*}
h \ge \frac{1}{\inf_{a>0}\sqrt{\I(F_{Y_a})}}\left(\sqrt{1+2\log\frac{1}{4\delta}}-1\right)
\end{align*}
In particular, we have that 
\begin{align*}
\inf_{a>0}\I(F_{Y_a}) &\le \lim_{a \to 0}\I(F_{Y_a}) \\
&= \lim_{a \to 0, a>0}\int \left(\frac{f(x-2a)-\frac{1}{a}\int_{0}^af(x-2u)\d u}{af_{Y_a}(x)}\right)^2f_{Y_a}(x)\d x\\
&= \lim_{a \to 0, a>0}\int \left(\frac{f(x-2a)-\int_{0}^1f(x-2au)\d u}{af_{Y_a}(x)}\right)^2f_{Y_a}(x)\d x\\
&= \lim_{a \to 0, a>0}\int \left(\frac{\int_{0}^1\frac{f(x-2a)-f(x-2au)}{a}\d u}{f_{Y_a}(x)}\right)^2f_{Y_a}(x)\d x\\
&= \int_a^b \left(\frac{f'(x)}{f(x)}\right)^2f(x)\d x = \I(F).
\end{align*}
Which concludes the proof.
\end{proof}

\begin{proof}[of Corollary~\ref{cor:local_lowerbound_fisher_location}]
This follows from Theorem~\ref{thm:local_lowerbound_fisher} and the fact that location models $\{F(\cdot +h)\mid h \in \R\}$ are q.m.d. Indeed, because $f$ is absolutely continuous and $\I(F)<\infty$, by \citep[Lemma A.1]{hajek1972local} we have that $\sqrt{f}$ is also absolutely continuous, and by Theorem~\ref{thm:condition q.m.d}, $\{F(x-h) |h \in \R\}$ is q.m.d. at any $h \in \R$. We conclude using that $\uI$ is equal to $\I(F)$ because Fisher's information is invariant by shift of the distribution.
\end{proof}

\begin{proof}[of Corollary~\ref{cor:lower_bound_semi_bounded}]
Because $\widehat{\mu}_n$ is a $\left(\eta/\sqrt{n}, \delta\right)$-estimator on the set of distribution supported on $[0,\infty)$, it is in particular a $\left(\eta/\sqrt{n}, \delta\right)$-estimator for $F$ that minimizes the Fisher information among all distributions with support on $[0,\infty)$ and variance $\sigma_F^2$. By \cite[Proposition 3]{bercher2009minimum}, we have that the distribution that minimizes the Fisher information is a $\chi^2$ distribution (which is absolutely continuous on $\R_+$) and the resulting fisher information is $\I(F)=(9-24/\pi)/\sigma_F^2$. The result follows by injecting this value in Theorem~\ref{thm:local_lowerbound_fisher}.
\end{proof}

\begin{proof}[of Corollary~\ref{cor:lower_bound_bounded}]
Because $\widehat{\mu}_n$ is a $\left(\eta/\sqrt{n}, \delta\right)$-estimator on the set of distribution supported on $[a,b]$, it is in particular a $\left(\eta/\sqrt{n}, \delta\right)$-estimator for $F$ that minimizes the Fisher information among all distributions with support on $[a,b]$. By invariance by translation and transformation by scaling $\I(F)= \I(F_s)$ where $F_s$ is a distribution on $[0,1]$. By \cite[Proposition 7]{bercher2009minimum}, we have that the distribution that minimizes the Fisher information over distributions on $[0,1]$ is $f(x)=\cos^2(\pi x/2)$ (which is absolutely continuous on $[0,1]$) with variance $\sigma^2 = \frac{1}{3}- \frac{2}{\pi}$ and the resulting fisher information is $\I(F)=\pi^2$. The result follows by injecting this value in Theorem~\ref{thm:local_lowerbound_fisher}.
\end{proof}
\section{Background on q.m.d. families of distributions}\label{sec:app_qmd}

\begin{theorem}[Sufficient condition for q.m.d.]\label{thm:condition q.m.d}
Let $\theta_0 \in \Theta\subset\R$. Assume the following conditions:
\begin{itemize}
\item $\sqrt {p_\theta(x)}$ is an absolutely continuous function of $\theta$ in some neighborhood of $\theta_0$ for $\mu$-almost all $x$. 
\item For $\mu$-almost all $x$, the derivative $p_\theta'(x)= \frac{\partial}{\partial \theta} p_\theta(x)$ exists at $\theta = \theta_0$.
\item The Fisher information $\I(F_\theta)$ is finite and continuous at $\theta=\theta_0$.
\end{itemize}
Then $\{P_\theta, \quad \theta \in \Theta\}$ is q.m.d. at $\theta_0$ with quadratic mean derivative $\eta(x,\theta)$ given by 
$$\eta(x,\theta):=
\begin{cases}p'_\theta(x)/(2\sqrt{p_\theta(x)}) & \text{ if }p_\theta(x)>0 \text{ and } p_\theta'(x) \text{ exists}\\
0 & \text{otherwise}.
\end{cases}
$$
\end{theorem}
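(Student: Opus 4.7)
The plan is to use absolute continuity of $\theta \mapsto \sqrt{p_\theta(x)}$ to express $\sqrt{p_{\theta_0+h}}-\sqrt{p_{\theta_0}}$ as an integral of its derivative $\eta(x,\theta)$, and then upper bound the q.m.d.\ integrand by an $L^2(\mu)$-modulus of continuity of $\theta\mapsto\eta(\cdot,\theta)$ at $\theta_0$, which is driven to zero by the assumed continuity of the Fisher information.

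First, for $\mu$-almost every $x$, the absolute continuity hypothesis together with the chain rule (on $\{p_\theta(x)>0\}$, with the convention $\eta(x,\theta)=0$ off that set) give
\[
\sqrt{p_{\theta_0+h}(x)}-\sqrt{p_{\theta_0}(x)} \;=\; \int_0^h \eta(x,\theta_0+s)\,\d s.
\]
Plugging this into the defining integral of q.m.d., applying Cauchy--Schwarz to the inner integral, and invoking Fubini--Tonelli (everything is nonnegative) yields
\[
\int\bigl(\sqrt{p_{\theta_0+h}}-\sqrt{p_{\theta_0}}-h\,\eta(\cdot,\theta_0)\bigr)^2\d\mu \;\le\; |h|\int_0^h \|\eta(\cdot,\theta_0+s)-\eta(\cdot,\theta_0)\|_{L^2(\mu)}^2\,\d s.
\]

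The crucial second step is to show $\|\eta(\cdot,\theta_0+s)-\eta(\cdot,\theta_0)\|_{L^2(\mu)}\to 0$ as $s\to 0$. Expanding the square, it suffices to prove that both $\|\eta(\cdot,\theta_0+s)\|_{L^2}^2$ and the cross product $\int\eta(\cdot,\theta_0+s)\,\eta(\cdot,\theta_0)\,\d\mu$ converge to $\|\eta(\cdot,\theta_0)\|_{L^2}^2$. The first convergence is exactly the continuity of the Fisher information at $\theta_0$, since $\I(P_\theta)=4\|\eta(\cdot,\theta)\|_{L^2(\mu)}^2$. For the cross product, I would combine $\mu$-a.e.\ pointwise convergence $\eta(x,\theta_0+s)\to\eta(x,\theta_0)$ as $s\to 0$ (available along suitable sequences from the existence of $p_\theta'(x)$ at $\theta_0$ together with absolute continuity of $\sqrt{p_\theta(x)}$) with the convergence of norms just obtained. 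Invoking the Brezis--Lieb lemma (or equivalently Vitali's convergence theorem, exploiting the uniform convexity of $L^2$) upgrades these two ingredients to strong $L^2$ convergence, from which the cross product convergence follows by Cauchy--Schwarz.

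Combining the two steps, the upper bound becomes $|h|\cdot|h|\cdot o(1)=o(|h|^2)$, which is the required q.m.d.\ estimate and identifies the quadratic mean derivative as $\eta(\cdot,\theta_0)$. The main obstacle is precisely the last upgrade: pointwise a.e.\ convergence together with convergence of $L^2$-norms does not in general imply strong $L^2$ convergence in arbitrary settings, but it does in $L^2(\mu)$ via Brezis--Lieb, and this is where the hypothesis that $\I$ is \emph{continuous} at $\theta_0$ (rather than merely finite) is genuinely used.
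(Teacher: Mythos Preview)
The paper does not prove this theorem: it is stated in Appendix~\ref{sec:app_qmd} purely as background, with an explicit citation to \citep[Theorem 12.2.1]{lehmann2005testing}. So there is no ``paper's own proof'' to compare against; I evaluate your argument against the standard textbook proof.

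Your first step is correct and is exactly how the textbook argument begins: the integral representation $\sqrt{p_{\theta_0+h}}-\sqrt{p_{\theta_0}}=\int_0^h\eta(\cdot,\theta_0+s)\,\d s$ together with Cauchy--Schwarz and Fubini gives the bound you state. The problem is in your second step. You need $\|\eta(\cdot,\theta_0+s)-\eta(\cdot,\theta_0)\|_{L^2(\mu)}\to 0$, and for this you invoke $\mu$-a.e.\ pointwise convergence $\eta(x,\theta_0+s)\to\eta(x,\theta_0)$. But the hypotheses do \emph{not} give you this: assumption~(ii) says $p_\theta'(x)$ exists \emph{at} $\theta_0$, not that $\theta\mapsto p_\theta'(x)$ is continuous at $\theta_0$. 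An absolutely continuous function can have a derivative that oscillates wildly near a point (think $\theta^2\sin(1/\theta)$), so $\eta(x,\theta_0+s)$ need not approach $\eta(x,\theta_0)$ pointwise, along any sequence. Your Brezis--Lieb step therefore lacks its pointwise-convergence hypothesis.

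The fix is to apply exactly the same machinery to a different sequence. Set $g_h:=h^{-1}\bigl(\sqrt{p_{\theta_0+h}}-\sqrt{p_{\theta_0}}\bigr)$. Then $g_h(x)\to\eta(x,\theta_0)$ $\mu$-a.e.\ directly from assumptions~(i) and~(ii), since this is precisely differentiability of $\sqrt{p_\theta(x)}$ at $\theta_0$. Your Cauchy--Schwarz bound (now used with $\eta(\cdot,\theta_0+s)$ alone, not the difference) gives $\|g_h\|_{L^2}^2\le\frac{1}{|h|}\int_0^h\tfrac14\I(P_{\theta_0+s})\,\d s\to\tfrac14\I(P_{\theta_0})=\|\eta(\cdot,\theta_0)\|_{L^2}^2$ by continuity of $\I$, while Fatou gives the reverse inequality. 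Now Brezis--Lieb (or Vitali) applies legitimately to $g_h$ and yields $\|g_h-\eta(\cdot,\theta_0)\|_{L^2}\to 0$, which is exactly the q.m.d.\ statement after multiplying by $h^2$. In short: right tools, wrong target sequence.
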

For completeness, we remind that $g(\theta)$ is an absolutely continuous function on an interval $[a,b]$, for $a,b\in \R$ if for any $\theta \in [a,b]$, $g(\theta)=g(a)+\int_a^\theta h(x)\d x$ for some integrable function $h$. In particular, if $g$ is pointwise continuously derivable the $g$ is absolutely continuous.
\end{document}